\newcommand\version{October 9, 2007} 
\newtheorem{theorem}{Theorem}[section]
\newtheorem{proposition}[theorem]{Proposition}
\newtheorem{lemma}[theorem]{Lemma}
\newtheorem{corollary}[theorem]{Corollary}
\theoremstyle{definition}
\newtheorem{assumption}[theorem]{Assumption}
\newtheorem{definition}[theorem]{Definition}
\newtheorem{example}[theorem]{Example}
\theoremstyle{remark}
\newtheorem{remark}[theorem]{Remark}
\numberwithin{equation}{section}
\newcommand{\A}{\mathcal{A}}
\newcommand{\B}{\mathfrak{B}}
\renewcommand{\b}{\mathfrak{b}}
\newcommand{\cl}{{\rm cl}}
\newcommand{\Dirichlet}{\mathscr{D}}
\renewcommand{\H}{H}
\newcommand{\loc}{{\rm loc}}
\newcommand{\N}{\mathbb{N}}
\newcommand{\Neumann}{\mathcal{N}}
\newcommand{\qf}{\mathfrak{a}}
\newcommand{\R}{\mathbb{R}}
\newcommand{\U}{\mathcal{U}}
\newcommand{\Z}{\mathbb{Z}}
\DeclareMathOperator{\dist}{dist}
\DeclareMathOperator{\spec}{spec}
\DeclareMathOperator{\tr}{tr}
\title[Eigenvalue estimates --- \version]{Eigenvalue estimates for Schr\"odinger operators on metric trees}
\author{Tomas Ekholm}
\address{Tomas Ekholm, Centre for Mathematical Sciences, Lund
  University, Box 118, 22100 Lund, Sweden} 
\email{tomase@maths.lth.se}
\author{Rupert L. Frank}
\address{Rupert L. Frank, Department of Mathematics, Fine Hall,
  Princeton University, Princeton, NJ 08544, USA} 
\email{rlfrank@math.princeton.edu}
\author{Hynek Kova\v r\'{\i}k}
\address{Hynek Kova\v r\'{\i}k, Department of Mathematics, Stuttgart
  University, Pfaffenwaldring 57, 70569 Stuttgart, Germany} 
\email{hynek.kovarik@mathematik.uni-stuttgart.de} 
\begin{document}

\begin{abstract}
We consider Schr\"odinger operators on regular metric trees and prove
Lieb-Thirring and Cwikel-Lieb-Rozenblum inequalities for their
negative eigenvalues. The validity of these inequalities depends on
the volume growth of the tree. We show that the bounds are valid in
the endpoint case and reflect the correct order in the weak or strong
coupling limit. 
\end{abstract}

\keywords{Schr\"odinger operator, metric tree, eigenvalue estimate,
  Lieb-Thirring inequality, Cwikel-Lieb-Rozenblum inequality} 

\thanks{\copyright\, 2007 by the authors. This paper may be  
reproduced, in
its entirety, for non-commercial purposes.}

\maketitle


\section{Introduction}

It is well known that the moments of negative eigenvalues of the
Schr\"o\-din\-ger operator $-\Delta-V$ in $L_2(\R^d)$ can be estimated in
terms of the classical phase space volume. Namely, the Lieb-Thirring
inequality states that the bound
\begin{equation} \label{LT-classical}
\tr\, (-\Delta-V)_-^{\gamma} \, \leq \, L_{\gamma,d}\, \int_{\R^d}\,
V^{\gamma+\frac d2}_+ \, dx\
\end{equation}
holds true for any potential $V$ if and only if
\begin{equation} \label{assumptions-classical}
\gamma \geq \frac 12 \, \, \,  \text{if }\, \,   d = 1, \quad
\gamma > 0 \, \, \,  \text{if } \, \,  d = 2,
\quad \gamma \geq 0\,
\, \,  \text{if } \, \,  d \geq 3\, .
\end{equation}
Here $x_\pm:=\max\{0,\pm x\}$ denotes the positive and negative part
of $x$. Inequality \eqref{LT-classical} is due to Lieb and Thirring
\cite{LT} and, in the endpoint cases, to Cwikel \cite{Cw}, Lieb
\cite{L}, Rozenblum \cite{R} and Weidl \cite{W}. We refer to \cite{LW}
and \cite{H} for recent reviews on this topic.

Our main objective is to establish the analog of \eqref{LT-classical}
for Schr\"odinger operators on metric trees. A (rooted) metric tree
$\Gamma$ consists of a set of vertices and a set of edges, i.e.,
segments of the real axis which connect the vertices. We assume that
$\Gamma$ has infinite height, that is, it contains points at
arbitrary large distance from the root. We define the Schr\"odinger
operator formally as 
\begin{align*}
-\Delta_\Neumann -V\, \quad \text{in}\quad L_2(\Gamma)
\end{align*}
with Kirchhoff matching conditions at the vertices and a Neumann
boundary condition at the root of the tree. 

Metric trees represent a special class of so called quantum graphs,
which recently have attracted great interest; see, e.g., \cite{BCFK,
  KoS, Ku1, Ku2} for extensive bibliographies about this subject. Many
works devoted to quantum graphs concern questions about self-adjoint
extensions, approximation by thin quantum wave guides and direct or
inverse scattering properties of the Laplace operator on graphs, see
the references above and also \cite{EP, KuS}. Various functional
inequalities for the Laplacian on metric trees have been established
in 
\cite{EHP,NS2}. However, much less attention has been paid, with the
exception of \cite{NS1}, to the classical question of finding appropriate
estimates, similar to \eqref{LT-classical}, on the discrete spectrum
of Schr\"odinger operators on metric trees. As we shall see, the
interplay between the spectral theory and the mixed dimensonality of a
tree makes this a fascinating problem. 

Our main result concern \emph{regular} metric trees, that is, trees
which are symmetric with respect to the distance from the root; see
Subsection \ref{prelim} for a precise definition. We shall show that
the validity of a suitable analog of \eqref{LT-classical} is
characterized by the global branching of the tree $\Gamma$.  The
latter is expressed by the \emph{branching function}
$g_0(t):=\#\{x:|x|=t\}$ which counts the number of points of $\Gamma$
as a function of the distance from the root. The function $g_0$ is
clearly non-decreasing. Depending on its growth we may split the trees
into two classes according to whether the integral 
\begin{equation} \label{integral}
\int_0^\infty\, \frac{dt}{g_0(t)}
\end{equation}
is finite (\emph{transient trees}) or infinite (\emph{recurrent
  trees}). It turns out that in the former case, the corresponding
Lieb-Thirring inequality holds for all values $\gamma\geq 0$. For
$\gamma=0$ this is an estimate on the number of negative eigenvalues
in terms of an integral of the potential, usually called a
Cwikel-Lieb-Rozenblum inequality. On the other hand, if the integral
\eqref{integral} is infinite, then Lieb-Thirring inequalities do not
hold for values of $\gamma$ which are smaller than some critical value
$\gamma_{\rm min}>0$. In order to determine the value of $\gamma_{\rm
  min}$ we use the notion of the {\it global dimension} of a metric
tree, see Definition \ref{dim}. This dimension is equal to $d\geq 1$
if the branching function $g_0$ has a power-like growth at infinity with
power $d-1$. We emphasize that in contrast to the Euclidean case, $d$
need not be an integer.  

For regular metric trees $\Gamma$ with global dimension $d$ and
Schr\"odinger operators with symmetric potentials $V$ we shall prove
Lieb-Thirring inequalities of the form 
\begin{equation} \label{LT-tree}
\tr\, (-\Delta_\Neumann- V)_-^{\gamma} \, \leq \, C\,
\int_{\Gamma}\, V^{\gamma+\frac{1+a}{2}}_+ \, g_0^{\frac{a}{d-1}}\,
dx\, , \quad a\geq 0\, .
\end{equation}
The allowed values of $\gamma$ are determined by the parameter $a$ and
by the global dimension $d$ of $\Gamma$, see Theorem
\ref{th:mainTree}. For $a=0$ the weight in the integral on the right
hand side disappears and the inequality is very similar to its
Euclidean version \eqref{LT-classical}. Both sides then share the same
growth in the strong coupling limit, see Remark \ref{strong}
below. On the other hand, it requires the exponent $\gamma\geq 1/2$
and does not capture the fact that even smaller moments can be
estimated for larger values of $d$. This motivates the inequality
\eqref{LT-tree} with different choices of $a$. As a consequence of our
result, the smallest value of $\gamma$ such that \eqref{LT-tree} holds
for some $a\geq 0$ (indeed, for $a=d-1$) is 
\begin{equation} \label{minimum}
\gamma_{\min} = \frac{2-d}{2} \quad 1\leq d <2\, , \quad
\gamma_{\min}=0\quad d >2\, .
\end{equation}
We emphasize that we establish the inequality in these endpoint cases
and that the resulting inequality for $1\leq d<2$ is
\emph{order-sharp} in the weak coupling limit, see Remark
\ref{coupling}. As one may expect by analogy with the Euclidean
situation, the case $d=2$ is somewhat special, since the minimal value
of $\gamma$ is $0$, but the inequality is not valid in the endpoint
case.  

We consider also the case of a homogeneous tree, i.e., a tree where
all edges have equal length and all vertices are of the same
degree. In this case, the function $g_0$ grows exponentially and the
Laplacian $-\Delta_\Neumann$ is positive definite. We prove
Cwikel-Lieb-Rozenblum inequalities for the number of eigenvalues that
a potential $V$ generates below the bottom of the spectrum of
$-\Delta_\Neumann$. 


An important ingredient in our proof of eigenvalue estimates are
one-dimensional Sobolev inequalities with weights. In particular, if
the integral \eqref{integral} is finite, we combine them with a Sturm
oscillation 
argument in order to deduce Cwikel-Lieb-Rozenblum inequalities. This
yields remarkably good bounds on the constants. We believe that our
technique, in particular the duality 
argument in Proposition \ref{duality}, has applications beyond the 
context of this paper. 

As we have pointed out, one of the main motivations for this work is
to understand how the dimensionality of the underlying space is
reflected in eigenvalue estimates. Several results in the literature
can be viewed in this light. If the global dimension of the underlying
space is, in contrast to our situation, \emph{smaller} than the local
dimension, then the eigenvalues are typically estimated by a sum of
two terms. Lieb-Thirring inequalities of this form have been proved by
Lieb, Solovej and Yngvason \cite{LSY} for 
the Pauli operator. The second, non-standard term there corresponds to
states in the lowest Landau level, which are localized in the
plane orthogonal to the magnetic field. A two-term inequality of more obvious
geometric nature was proved by Exner and Weidl \cite{EW} for
Schr\"odinger operators in a waveguide $\omega\times\R$,
$\omega\subset\R^{d-1}$. Here the second term corresponds to the
global dimension, which is one, as opposed to the local dimension
$d$. These two-term estimates are order-sharp both in the weak
coupling regime (where the global dimension is dominant) and in the
strong coupling regime (where the local dimension is dominant). In our
situation, however, the global dimension is \emph{larger} than the
local dimension, and a two-term inequality would neither in the weak
nor in the strong coupling regime be order-sharp. Therefore we propose
\emph{families} of inequalities, which are sharp in different coupling
regimes. This is 
somewhat reminiscent of the family of inequalities proved by Hundertmark
and Simon \cite{HS} for the discrete Laplacian on the lattice $\Z^d$,
where the local dimension is $0$ and the global dimension is $d$.

\subsection*{Acknowledgements}

The authors are grateful to Robert Seiringer and Timo Weidl for
several useful discussions, and to the organizers of the workshop
`Analysis on Graphs' at the 
Isaac Newton Institute in Cambridge for their kind invitation. This
work has been supported by FCT grant SFRH/BPD/ 23820/2005 (T.E.) and
DAAD grant D/06/49117 (R.F.). Partial support by the ESF programme
SPECT (T.E. and H.K.) and the DAAD-STINT PPP programme (R.F.) is
gratefully acknowledged.


\section{Main results and discussions}

\subsection{Preliminaries}
\label{prelim}
Let $\Gamma$ be a rooted metric tree with root $o$. By $|x|$ we denote
the unique distance between a point $x \in\Gamma$ and the root
$o$. Throughout we assume that $\Gamma$ is of infinite height, i.e.,
$\sup_{x\in\Gamma}|x|=\infty$. The branching number $b(x)$ of a vertex
$x$ is defined as the number of edges emanating from $x$. We assume
the natural conditions that $b(x) > 1$ for any vertex $x \neq o$ and
that $b(o)=1$. 

We define the Neumann Laplacian $-\Delta_\Neumann$ as the self-adjoint
operator in $L_2(\Gamma)$ associated with the closed quadratic form
\begin{align}\label{eq:kinetic}
  \int_{\Gamma} |\varphi'(x)|^2 \, dx, \quad \varphi \in \H^1(\Gamma).
\end{align}
Here $\H^1(\Gamma)$ consists of all continuous functions
$\varphi$ such that $\varphi \in \H^1(e)$ on each edge $e$ of $\Gamma$
and 
\begin{align*}
\int_{\Gamma} \left(|\varphi'(x)|^2 + |\varphi(x)|^2\right) \, dx <
\infty. 
\end{align*}
The operator domain of $-\Delta_\Neumann$ consists of all continuous
functions 
$\varphi$ such that $\varphi'(o)=0$, $\varphi \in \H^2(e)$ for each
edge $e$ of $\Gamma$ and such that at each vertex $x \neq o$ of
$\Gamma$ the matching conditions
\begin{align*}
\varphi_-(x)= \varphi_1(x)=\cdots = \varphi_{b(x)}(x)\, , \quad
\varphi'_-(x) = \varphi'_1(x) + \cdots + \varphi'_{b(x)}(x)
\end{align*}
are satisfied. Here $\varphi_-$ denotes the restriction of $\varphi$
on the 
edge terminating in $x$ and $\varphi_j,\, j=1,\ldots,b(x),$ denote the
restrictions of $\varphi$ to the edges emanating from
$x$, see, e.g., \cite{NS1, NS2} for details.

In this paper we are interested in Schr\"odinger operators
$-\Delta_\Neumann-V$ in $L_2(\Gamma)$. Throughout we assume that the
potential $V$ is a real-valued, sufficiently regular function on
$\Gamma$, the positive part of which vanishes at infinity in a
suitable sense. (We shall be more precise below.) In this case the
negative spectrum of $-\Delta_\Neumann-V$ consists of discrete
eigenvalues of finite multiplicities. Our goal is to estimate the
total number of these eigenvalues or, more generally, moments of these
eigenvalues in terms of integrals of the potential $V$.

The starting point of our analysis is

\begin{theorem} 
  \label{nonsym}
  Let $\gamma \geq 1/2$. Then there exists a constant $L_\gamma$ such
  that for any rooted metric tree $\Gamma$ and any $V$, 
  \begin{align} \label{eq:nonsym}
    \tr (-\Delta_\Neumann - V)_-^\gamma
    \leq L_\gamma \,
    \int_{\Gamma} V(x)_+^{\gamma + \frac 12} \, dx.
  \end{align}
\end{theorem}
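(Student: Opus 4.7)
The plan is to reduce the tree inequality to the classical one-dimensional Lieb-Thirring inequality applied on each edge separately, via Neumann bracketing at every vertex.

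First I would decouple the tree. Let $\tilde H_0$ denote the self-adjoint operator on $L_2(\Gamma) = \bigoplus_e L_2(e)$ associated with the closure of the form $\int_\Gamma |\varphi'(x)|^2\, dx$ on the enlarged domain $\bigoplus_e \H^1(e)$, in which the continuity requirement at every vertex has been dropped; this is simply the direct sum of the Neumann Laplacians on the individual edges $e$. Since $\H^1(\Gamma) \subset \bigoplus_e \H^1(e)$ and the two quadratic forms agree on the smaller domain, $\tilde H_0 \le -\Delta_\Neumann$ in the form sense. Subtracting $V$ preserves the inequality, and the variational principle yields
\begin{equation*}
\tr(-\Delta_\Neumann - V)_-^\gamma \, \le \, \tr(\tilde H_0 - V)_-^\gamma \, = \, \sum_{e}\tr\bigl(-\Delta^{(e)}_\Neumann - V|_e\bigr)_-^\gamma,
\end{equation*}
where $-\Delta^{(e)}_\Neumann$ denotes the Neumann Laplacian on the single edge $e$.

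Next I would invoke on each edge the one-dimensional Neumann Lieb-Thirring inequality: for any interval $e$, any $U$, and any $\gamma \ge 1/2$,
\begin{equation*}
\tr\bigl(-\Delta^{(e)}_\Neumann - U\bigr)_-^\gamma \, \le \, L_\gamma \int_e U_+^{\gamma + 1/2}\, dx,
\end{equation*}
with a constant $L_\gamma$ independent of the length of $e$. This is standard: reflecting $U$ evenly through an endpoint of $e$ realizes the Neumann problem on $e$ as the even part of a Neumann problem on a doubled interval, and passing to a full-line extension (combined with a Birman-Schwinger / method-of-images argument to control the constant) reduces the bound to the classical 1D Lieb-Thirring inequality on $\R$ of \cite{LT, W}. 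Summing over the edges of $\Gamma$ then turns the right-hand side into $L_\gamma \int_\Gamma V_+^{\gamma+1/2}\, dx$, which is exactly \eqref{eq:nonsym}.

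The delicate point, and thus the main obstacle, is only the uniformity of the constant $L_\gamma$ in the edge length, notably for very short edges. This one-dimensional fact is well known but deserves care; the cleanest route is via the Birman-Schwinger principle together with the pointwise bound on the Neumann Green's function on an interval, which by the method of images is dominated by a sum of exponentially decaying whole-line Green's functions. Once this ingredient is in hand, the tree result follows by a single application of the Neumann bracketing above.
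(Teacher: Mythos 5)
Your plan to decouple by imposing Neumann conditions at the vertices is the right instinct, and it matches the paper's philosophy. But the object you decouple into is wrong, and that is a fatal gap: the one-dimensional Neumann Lieb--Thirring inequality you invoke on a single \emph{finite} edge is false, with or without a uniform constant.

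To see this, take a single edge $e=[0,\ell]$ and a constant potential $U\equiv c>0$ with $c$ small. The Neumann Laplacian on $[0,\ell]$ has $0$ in its spectrum (the constant eigenfunction lies in $L_2(e)$), so $-\Delta^{(e)}_\Neumann-c$ has the eigenvalue $-c$ regardless of $\ell$. Hence
\begin{equation*}
\tr\bigl(-\Delta^{(e)}_\Neumann - c\bigr)_-^\gamma \;\geq\; c^\gamma,
\qquad\text{while}\qquad
L_\gamma\int_e c^{\gamma+1/2}\,dx \;=\; L_\gamma\,c^{\gamma+1/2}\,\ell,
\end{equation*}
so the proposed inequality would force $L_\gamma\geq c^{-1/2}\ell^{-1}\to\infty$ as $c\to 0$. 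No constant works, not even one depending on $\ell$. The mechanism is the $L_2$ zero mode: on a finite interval with Neumann conditions the lowest eigenvalue created by a weak potential behaves like $-(1/\ell)\int_e U\,dx$, which is linear rather than quadratic in $U$, and this is incompatible with the exponent $\gamma+\tfrac12$ on the right-hand side. The Birman--Schwinger / method-of-images remedy you sketch does not rescue this, because even reflection only carries you to Neumann conditions on a \emph{finite} doubled interval, whose spectrum lies below, not above, that of the whole-line operator — so the classical whole-line bound gives no control.

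The paper's proof avoids exactly this pitfall by a different decomposition: instead of splitting $\Gamma$ into its (finite) edges, it partitions $\Gamma$ into a disjoint union of \emph{infinite half-lines} $\Gamma_j$, each a ray from some vertex to infinity. After imposing Neumann conditions to decouple these rays, one applies the half-line Neumann Lieb--Thirring inequality (Proposition~\ref{ek}, obtained from the whole-line bound of \cite{LT,W,EK} by even reflection). On a half-line the Neumann ``zero mode'' is the constant function, which is not square-integrable, so the inequality does hold for $\gamma\geq 1/2$ with a universal constant. Summing over the rays then gives $\int_\Gamma V_+^{\gamma+1/2}\,dx$ exactly as in your final step. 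So if you replace ``decompose into edges'' by ``decompose into infinite rays'' and use the half-line Neumann bound, your argument becomes the paper's argument.
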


We emphasize that the constant $L_\gamma$ is independent of
$\Gamma$. This result is clearly analogous to the standard
one-dimensional Lieb-Thirring inequalities. An advantage is its
universality. Moreover, we will see in Subsection \ref{sec:discussion}
below, that the right hand side has the correct order of growth in the
strong coupling limit when $V$ is replaced by $\alpha V$ and
$\alpha\to\infty$. On the other hand, it does not reflect the geometry
of $\Gamma$ at all and it does \emph{not} display the correct behavior
in the weak coupling limit when $V$ is replaced by $\alpha V$ and
$\alpha\to 0$. 
 
The main goal of this paper is to obtain eigenvalue estimates which
take the global structure of $\Gamma$ into account. We shall consider
trees which possess certain additional symmetry properties. Namely, we
impose 

\begin{assumption}\label{ass:regular}
The tree $\Gamma$ is \emph{regular}, i.e., all the vertices at the
same distance from the root have equal branching numbers and all the
edges 
emanating from these vertices have equal length.
\end{assumption}

Let $x$ be a vertex such that there are $k+1$ vertices on the (unique) path
between $o$ and $x$ including the endpoints. We denote by $t_k$ the
distance $|x|$ and by $b_k$ the branching number of $x$. Moreover, we
put $t_0:=0$ and $b_0:=1$. Note that $t_k$ and $b_k$ are only
well-defined for regular 
trees and that these numbers, in the regular case, uniquely determine
the tree. 

We define the \emph{(first) branching function} $g_0 : \R_+ \to \N$ by
\begin{align*}
  g_0(t) := b_0\, b_1\cdots b_k, 
  \quad \text{if} \ t_k < t \leq t_{k+1},
  \quad k\in\N_0.
\end{align*}
Here $\N= \{1,2,3,\ldots\}$ and $\N_0 := \N \cup \{0\}$. Note that
$g_0$ is a non-decreasing function and that $g_0(t)$ coincides with
the number of points $x \in \Gamma$ such that $|x|=t$. The rate of
growth of $g_0$ reflects the rate of growth of the tree $\Gamma$. More
precisely, $g_0$ measures how the surface of the `ball' $\{x\in\Gamma
: |x| < t\}$ grows with $t$. Of great importance in our analysis will
be the fact whether the {\it reduced height} of $\Gamma$, 
\begin{equation}\label{eq:l}
\ell_\Gamma := \int_0^{\infty}\, \frac{dt}{g_0(t)}
\end{equation}
is finite or not.

In addition to Assumption \ref{ass:regular} we shall impose

\begin{assumption}
The function $V$ is \emph{symmetric}, i.e., for any $x\in\Gamma$ the
value $V(x)$ depends only on the distance $|x|$ between $x$ and the
root $o$.
\end{assumption}

With slight abuse of notation we shall write sometimes $V$ instead of
$V(|\cdot|)$.


\subsection{Eigenvalue estimates on trees}

In this subsection we present our main results. We denote by $N(T)$
the number of negative eigenvalues (counting multiplicities) of a
self-adjoint, lower bounded operator $T$. We begin with the case where
the reduced height \eqref{eq:l} is finite. In this case we shall prove 

\begin{theorem}[\textbf{CLR bounds for trees of finite reduced height}]
  \label{finite-rh}
  Let $\Gamma$ be a regular metric tree with $\ell_\Gamma < \infty$
  and let $w:\R_+\to\R_+$ be a positive function such that for some
  $2<q\leq\infty$ 
  \begin{equation}\label{eq:clrass}
    M :=    \sup_{t\geq 0} \left(\int_0^t g_0(s)^{\frac q2}
    w(s)^{-\frac{q-2}2} \,ds\right)^{2/q}
    \int_t^\infty \frac{ds}{g_0(s)} <\infty \ .
  \end{equation}
  Let $p:= q/(q-2)$. Then there exists a constant $N_p(\Gamma,w)$ such
  that
  \begin{equation} 
    \label{eq:clr} 
    N(-\Delta_\Neumann - V) \leq
    N_p(\Gamma,w)\int_{\Gamma} V(|x|)_+^p w(|x|) \, dx\,
  \end{equation}
  for all symmetric $V$. Moreover, the sharp constant in
  \eqref{eq:clr} satisfies
  \begin{equation*}
    N_p(\Gamma,w) \leq (1+p')^{p-1}\left(1+\frac1{p'}\right)^p M^p.
  \end{equation*}
\end{theorem}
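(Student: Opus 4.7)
The plan is to exploit the tree symmetry to reduce to a weighted one-dimensional CLR problem, then prove the one-dimensional bound by combining a Sturm oscillation argument with the weighted Hardy-type inequality encoded in the hypothesis $M<\infty$.

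\textbf{Step 1: Reduction by symmetry.} A regular tree $\Gamma$ with symmetric potential $V$ admits an orthogonal decomposition $L_2(\Gamma) = \bigoplus_k \mathcal{H}_k$ into ``spherical harmonic'' isotypic components indexed by the branching generation. On each $\mathcal{H}_k$ the Schr\"odinger operator $-\Delta_\Neumann - V$ restricts to a weighted one-dimensional Sturm--Liouville operator $\tau_k u = -g_k(t)^{-1}(g_k(t) u'(t))' - V(t) u(t)$ on $L_2(I_k, g_k(t)\,dt)$, with Neumann data at the origin for $k=0$ and Dirichlet data at $t_{k-1}$ for $k\geq 1$; here $g_k$ is essentially the restriction of $g_0$ starting from the $k$-th generation. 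The multiplicity-weighted sum of the measures $g_k\,dt$ reproduces $g_0\,dt$ on $\R_+$. Hence if one proves a uniform one-dimensional bound of the form $N(\tau_k - V) \leq C(M,p) \int_0^\infty V^p w g_k\,dt$, the stated estimate on $\Gamma$ follows by summing over $k$.

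\textbf{Step 2: The one-dimensional CLR bound.} For the model operator $\tau = -g^{-1}(g u')' - V$ on $L_2(\R_+, g\,dt)$, the Sturm oscillation theorem identifies $N(\tau - V)$ with the number of zeros in $(0,\infty)$ of a distinguished solution of $\tau u = 0$. Between consecutive zeros $a < b$, integration by parts yields $\int_a^b g|u'|^2\,dt = \int_a^b Vg|u|^2\,dt$. H\"older's inequality with exponents $p, p'$ then gives
$$\int_a^b V g |u|^2\,dt \leq \left(\int_a^b V^p w g\,dt\right)^{1/p} \left(\int_a^b |u|^{2p'} w^{1-p'} g\,dt\right)^{1/p'}.$$
The condition $M<\infty$ is, by the Bradley--Muckenhoupt characterization, precisely what is needed for a weighted Hardy/Sobolev inequality that bounds the second H\"older factor by a multiple of $M \cdot \int_a^b g|u'|^2\,dt$. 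Combining the two and canceling yields $\int_a^b V^p w g\,dt \geq C(M,p)^{-1}$ on each interval between zeros; summing over the $N$ intervals produces the one-dimensional CLR bound.

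\textbf{Step 3: Sharp constants; main obstacle.} The principal technical challenge is extracting the explicit constant $(1+p')^{p-1}(1+1/p')^p M^p$ rather than merely some $C(M,p)$. A naive Muckenhoupt--H\"older chain gives a constant of order $M^p$ but with a suboptimal prefactor. The duality argument alluded to in the introduction (``Proposition \ref{duality}'') should bypass the intermediate H\"older step, instead treating $\tau^{-1}$ directly as an integral operator with the explicit kernel $K(t,s) = \int_{\max(t,s)}^\infty g^{-1}\,dr$ and applying a single sharp duality inequality to its $L^p$-operator norm; the factor $(1+1/p')^p$ is the sharp Bliss-type constant in the underlying one-dimensional weighted Hardy inequality, while $(1+p')^{p-1}$ plausibly emerges from the convexity step matching the Sturm oscillation count to the $L^p(wg)$-norm of $V$. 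A secondary subtlety will be arranging the bookkeeping so that the constant is \emph{uniform} across the isotypic components $\mathcal{H}_k$, whose Dirichlet weights are more singular at the endpoint $t_{k-1}$, and so that the summation over $k$ does not multiply the constant.
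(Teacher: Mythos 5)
Your overall plan — symmetry decomposition into half-line operators, then a Sturm oscillation argument powered by a weighted Hardy/Sobolev inequality — is indeed the paper's strategy, but two of your concrete steps contain genuine errors that would derail the proof.

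\textbf{The multiplicity bookkeeping in Step 1 is incorrect.} You claim that the multiplicity-weighted sum of the measures $g_k\,dt$ reproduces $g_0\,dt$. This is false: with multiplicities $m_k = b_1\cdots b_{k-1}(b_k-1)$ and $g_k(t)=g_0(t)/(b_1\cdots b_k)$ for $t>t_k$, one finds for $t\in(t_n,t_{n+1})$ that $\sum_{k} m_k g_k(t) = g_0(t)\bigl(1+\sum_{k=1}^{n}\frac{b_k-1}{b_k}\bigr)$, which exceeds $g_0(t)$ by an unbounded factor (e.g.\ linearly in $n$ for a homogeneous tree). What actually telescopes to $g_0(t)$ is the sum of multiplicity-weighted \emph{characteristic functions}: $\sum_{k\geq 0} m_k\chi_{(t_k,\infty)}(t) = b_1\cdots b_n = g_0(t)$. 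The paper exploits exactly this: after the decomposition into $\A_k$, it first majorizes $\tr(\A_k-V_k)_-\leq \tr(\A_0-\chi_{(t_k,\infty)}V)_-$ (because functions in $H^1_0((t_k,\infty),g_k)$ extend by zero to $H^1(\R_+,g_0)$ with the same Rayleigh quotient, since $g_k\propto g_0$ on $(t_k,\infty)$); this reduces everything to the single operator $\A_0$, handles the uniformity over $k$ that you flag as a ``secondary subtlety'', and produces the correct CLR bound $N(\A_0-W)\leq C\int_0^\infty W_+^p\, w\,dt$ with weight $w$ \emph{alone}, so that the multiplicity sum gives $\int_0^\infty V_+^p w\,g_0\,dt = \int_\Gamma V_+^p w\,dx$.

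\textbf{The H\"older split in Step 2 distributes $g$ incorrectly.} You factor $Vg|u|^2 = (Vw^{1/p}g^{1/p})\cdot(|u|^2 w^{-1/p}g^{1/p'})$, which yields $\int V^p w g\,dt$ on one side (wrong weight: see above) and $\int|u|^{2p'}w^{1-p'}g\,dt$ on the other. The Bradley--Muckenhoupt condition governing the latter Sobolev inequality involves $\int_0^t g\,w^{-(q-2)/2}\,ds$, which is \emph{not} the quantity $M$ in the theorem: the theorem's $M$ involves $\int_0^t g^{q/2}w^{-(q-2)/2}\,ds$ with exponent $q/2=p'>1$. The paper's split is $Vg|u|^2 = (Vw^{1/p})\cdot(|u|^2 g\,w^{-1/p})$, so the first factor gives $\int V^p w\,dt$ (no $g$, as needed for the multiplicity bookkeeping), and the second, raised to $2/q$, is $\bigl(\int|u|^q g^{q/2}w^{-(q-2)/2}\,dt\bigr)^{2/q}$ --- exactly what Proposition~\ref{mazya} with $v=g^{1/2}$ and Muckenhoupt quantity $T^2=M$ controls by $S^2\int g|u'|^2\,dt$. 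Finally, your Step 3 misattributes the constant's two factors: both $(1+p')^{p-1}$ and $(1+1/p')^p$ come simply from raising Opic's bound $S\leq(1+q/2)^{1/q}(1+2/q)^{1/2}T$ to the power $2p$, using $q/2=p'$, $2p/q=p-1$; no separate convexity argument or duality through the resolvent kernel is involved (that kernel computation only appears in the paper as an alternative proof of the sharp $q=\infty$ example).
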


By definition, if $q=\infty$ condition \eqref{eq:clrass} is understood
as
\begin{equation*}
  \sup_{t\geq 0} \left(\sup_{0\leq s\leq t} \frac{g_0(s)}{w(s)}\right)
  \int_t^\infty \frac{ds}{g_0(s)}   <\infty\ ,
\end{equation*}
and one has $N_1(\Gamma,w) \leq M$.

In order to give more explicit estimates we assume that the growth of
the branching function is sufficiently regular in the sense of 

\begin{definition} \label{dim}
  A regular metric tree $\Gamma$ has \emph{global dimension} $d\geq 1$
  if its branching function satisfies 
  \begin{align}\label{eq:dim}
    0 < c_1:= \inf_{t\geq0} \frac{g_0(t)}{(1+t)^{d-1}} \leq 
    \sup_{t\geq0} \frac{g_0(t)}{(1+t)^{d-1}} =: c_2 <\infty\, .
  \end{align}
\end{definition}

Obviously, if $\Gamma$ has global dimension $d$, then it has finite
reduced height if and only if $d>2$. In this case Theorem
\ref{finite-rh} implies 

\begin{corollary}\label{clrcor}
  Assume that $\Gamma$ has global dimension $d>2$. Then for any $a\geq
  1$ there exists a constant $C(a,\Gamma)$ such that for any symmetric
  $V$ 
  \begin{equation*}
    N(-\Delta_\Neumann - V)
    \leq C(a,\Gamma)
    \int_{\Gamma} V(|x|)^{\frac{1+a}{2}}\, g_0(|x|)^{\frac{a}{d-1}} \, dx\, . 
  \end{equation*}
\end{corollary}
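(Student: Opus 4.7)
The plan is to deduce this corollary directly from Theorem \ref{finite-rh} by an appropriate choice of parameters. For the given $a \geq 1$, set $p := (1+a)/2$ and $w(s) := g_0(s)^{a/(d-1)}$, so that $V(|x|)^p\, w(|x|)$ matches the integrand on the right-hand side of the claim. The corresponding value $q = 2p/(p-1) = 2(1+a)/(a-1)$ lies in $(2,\infty)$ when $a > 1$, and reduces to $q = \infty$ (with $p = 1$) when $a = 1$. Once I verify that the quantity $M$ in \eqref{eq:clrass} is finite for this $w$, the corollary follows from Theorem \ref{finite-rh} with $C(a,\Gamma)$ read off from the bound on $N_p(\Gamma,w)$.

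For $a > 1$, using Definition \ref{dim} we have $g_0(s) \asymp (1+s)^{d-1}$ uniformly in $s$, with constants depending on $c_1, c_2$. Therefore
\[
g_0(s)^{q/2}\, w(s)^{-(q-2)/2} \asymp (1+s)^{\beta}, \qquad \beta := \tfrac{(d-1)q}{2} - \tfrac{a(q-2)}{2},
\]
while the tail integral $\int_t^{\infty} g_0(s)^{-1}\,ds \asymp (1+t)^{2-d}$ is finite because $d>2$. In the generic case $\beta > -1$, integrating over $[0,t]$ produces $(1+t)^{\beta+1}$ up to constants, and after raising to the $2/q$-th power and multiplying by $(1+t)^{2-d}$ one obtains the power $2(\beta+1)/q + 2 - d$ of $(1+t)$. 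Using $(q-2)/q = 1/p$, $2/q = (p-1)/p$, and the identity $p-1-a = -p$ (which follows from $p = (1+a)/2$), this exponent collapses to $0$, which yields uniform boundedness. The degenerate cases $\beta \leq -1$, which may occur for some $d$ slightly larger than $2$ and large $a$, are easier, since the first factor is then bounded (or at worst grows logarithmically in $t$) while the second still vanishes as $t \to \infty$.

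The endpoint $a = 1$ requires the $q = \infty$ form of \eqref{eq:clrass}. With $w = g_0^{1/(d-1)}$ the ratio $g_0(s)/w(s) = g_0(s)^{(d-2)/(d-1)} \asymp (1+s)^{d-2}$ is non-decreasing, so the inner supremum is attained at $s = t$, and the product with $\int_t^{\infty} g_0(s)^{-1}\,ds \asymp (1+t)^{2-d}$ is uniformly bounded. There is no serious obstacle in this argument; the whole proof is a parameter-chasing computation, and the only delicate point is verifying the exact cancellation of exponents that forces $M < \infty$ in the critical regime $\beta > -1$, together with the separate treatment of the endpoint $a=1$ via the $q = \infty$ formulation.
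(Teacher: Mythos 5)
Your argument is correct and is exactly the computation the paper has in mind: the paper states that Corollary \ref{clrcor} follows from Theorem \ref{finite-rh} without spelling out the verification of \eqref{eq:clrass}, and your choice $w(s)=g_0(s)^{a/(d-1)}$, $q=2(1+a)/(a-1)$ (with the $q=\infty$ form at $a=1$) is the intended one. One harmless slip: the degenerate case $\beta\leq -1$ you set aside never actually arises, since $\beta+1=(d-2)(1+a)/(a-1)>0$ for every $d>2$ and $a>1$, so your main exponent-cancellation calculation already covers the whole parameter range.
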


Next we turn to the case of infinite reduced height
$\ell_\Gamma=\infty$. It is easy to see that Schr\"odinger operators
$-\Delta_\Neumann-V$ on such trees with non-trivial $V\geq 0$ have at
least one negative eigenvalue, no matter how small $V$ is. Hence it is
impossible to estimate the number of eigenvalues from above by a
weighted integral norm of the potential. However, under the assumption
that the tree has a global dimension we can prove estimates for the
\emph{moments} of negative eigenvalues of $-\Delta_\Neumann -
V$. Moreover, we can treat the case $0\leq a< 1$ which was left open
in Corollary \ref{clrcor}. Our result is 

\begin{theorem}[\textbf{LT bounds for trees}] \label{th:mainTree}
  Let $\Gamma$ be a regular metric tree with global dimension $d\geq 1$.
  \begin{enumerate}
  \item 
    Assume that either $1\leq d<2$ and $0\leq a\leq d-1$, or else that
    $d\geq 2$ and $0\leq a <1$. Then for any $\gamma\geq
    \frac{1-a}{2}$ there exists a constant $C(\gamma,a,\Gamma)$ such
    that for any symmetric $V$ 
    \begin{equation} \label{LT-basic} 
      \tr (-\Delta_\Neumann - V)_-^{\gamma} \leq C(\gamma,a,\Gamma)
      \int_{\Gamma} V(|x|)_+^{\gamma + \frac{1 + a}{2}}
      g_0(|x|)^{\frac{a}{d-1}} \, dx. 
    \end{equation}
  \item 
    Assume that either $1\leq d < 2$ and $a>d-1$, or else that $d=2$
    and $a\geq 1$. Then for any $\gamma>(1+a)\, \frac{2-d}{2d}$ there
    exists $C(\gamma,a,\Gamma)$ such that \eqref{LT-basic} holds for
    any symmetric $V$.  
  \item
    Assume that $d > 2$ and that $a\geq 1$. Then for any $\gamma\geq
    0$ there exists $C(\gamma,a,\Gamma)$ such that \eqref{LT-basic}
    holds for any symmetric $V$. 
  \end{enumerate}
\end{theorem}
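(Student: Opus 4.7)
The three parts share a common strategy: reduce to a critical endpoint via the Aizenman-Lieb lifting and, for the symmetric regular case, to a weighted one-dimensional problem built from the radial decomposition of $-\Delta_\Neumann$.

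\textbf{Part (3).} For $d > 2$ and $a \geq 1$, Corollary~\ref{clrcor} provides the $\gamma = 0$ case. For $\gamma > 0$ I would apply the Aizenman-Lieb lifting. Using the identity
\begin{equation*}
t_-^\gamma = B(\gamma, 1)^{-1} \int_0^\infty (t + \sigma)_-\, \sigma^{\gamma-1}\, d\sigma,
\end{equation*}
one obtains
\begin{equation*}
\tr(-\Delta_\Neumann - V)_-^\gamma = B(\gamma,1)^{-1} \int_0^\infty N(-\Delta_\Neumann - (V - \sigma))\, \sigma^{\gamma-1}\, d\sigma.
\end{equation*}
Inserting the CLR bound of Corollary~\ref{clrcor} applied to $V - \sigma$, exchanging the order of integration, and evaluating the inner $\sigma$-integral as a Beta integral produces the desired weight $V_+^{\gamma + (1+a)/2} g_0^{a/(d-1)}$, with constant of the correct shape.

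\textbf{Parts (1) and (2).} Using the symmetry of $V$ and Assumption~\ref{ass:regular}, I would invoke the Naimark-Solomyak orthogonal decomposition of $L_2(\Gamma)$ into components on which $-\Delta_\Neumann - V$ acts as a one-dimensional Sturm-Liouville-type operator $-g_0^{-1}(g_0 u')' - V$ on $L_2((t_k,\infty), g_0\, dt)$, with Dirichlet conditions at $t_k$ for the non-radial components and the Neumann condition at $0$ for the radial one. Summing contributions over all components, this reduces the problem to a single weighted one-dimensional Lieb-Thirring inequality
\begin{equation*}
\tr\left(-\tfrac{d^2}{dt^2} - V\right)_-^\gamma \bigg|_{L_2(\R_+,\, g_0\, dt)} \leq C \int_0^\infty V_+^{\gamma+(1+a)/2}\, g_0^{1+a/(d-1)}\, dt.
\end{equation*}
At $a = 0$, $\gamma \geq 1/2$ this is the content of Theorem~\ref{nonsym} restricted to the symmetric sector. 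For the critical line $\gamma = (1-a)/2$ in the $a$-range specified in part (1), I would establish the inequality via a weighted Hardy-Sobolev estimate on $(0,\infty)$ that reflects the volume growth $g_0(t) \asymp (1+t)^{d-1}$, combined with a Birman-Schwinger argument and the duality device indicated in the introduction. Larger $\gamma$ at fixed $a$ then follow by a second Aizenman-Lieb lifting, completing part (1). For part (2), once $a$ exceeds $d - 1$ in the range $1 \leq d \leq 2$, the correct lower threshold $\gamma_{\min} = (1+a)(2-d)/(2d)$ is dictated by scaling of the two sides of \eqref{LT-basic}, and the bound is obtained by lifting from the already established $a = d - 1$ endpoint.

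\textbf{Main obstacle.} The heart of the argument is the weighted endpoint 1D Lieb-Thirring inequality at $\gamma = (2-d)/2$, $a = d-1$ when $1 \leq d < 2$. Here both sides of \eqref{LT-basic} scale identically under $V \mapsto \alpha V$, so no slack is available; moreover, the constant must depend only on the global dimension $d$ (through $c_1, c_2$ of Definition~\ref{dim}) and not on the local branching pattern $\{b_k\}$. The natural route is a weighted Sobolev-type inequality on $L_2(\R_+, g_0\, dt)$ capturing the $d$-dimensional volume growth, combined with a Birman-Schwinger reduction adapted to the $\gamma$-power; the weak-coupling order-sharpness mentioned in Remark~\ref{coupling} confirms that $\gamma_{\min} = (2-d)/2$ is optimal and that the endpoint inequality must genuinely be proved rather than obtained from a strict inequality by limiting.
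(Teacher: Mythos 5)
Your overall strategy for the symmetric regular case is the right one and matches the paper through the first two steps: the Naimark--Solomyak decomposition of Proposition~\ref{NStheorem} together with the comparison \eqref{majortrace} reduces everything to a single Neumann half-line operator $\A_0 = A_{g_0}$ in $L_2(\R_+,g_0\,dt)$, and Part~(3) is indeed Aizenman--Lieb lifting of Corollary~\ref{clrcor}, exactly as the paper states. Parts~(1) and~(2), however, have genuine gaps.

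First, after reduction to the one-dimensional operator $A_{g_0}$, you propose ``a weighted Hardy--Sobolev estimate combined with a Birman--Schwinger argument and the duality device.'' But the duality device (Proposition~\ref{duality}) only converts the weighted Sobolev/Gagliardo--Nirenberg inequality of Theorem~\ref{gn1} into a one-bound-state bound, i.e., it controls $\sup\spec\bigl((A_{g_0}-V)_-^\gamma\bigr)$, not the full trace. The paper's crucial intervening step, which your proposal omits, is Dirichlet--Neumann interlacing: $\tr(A-V)_-^\gamma \leq \sup\spec\bigl((A-V)_-^\gamma\bigr) + \tr(A_\Dirichlet-V)_-^\gamma$. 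The second summand is then handled by a different tool --- Lemma~\ref{auxiliary} replaces $g_0$ by the pure power weight, a unitary transformation turns $A_\Dirichlet$ into $-d^2/dr^2 + (d-1)(d-3)/(4r^2)$, and the LT bounds for inverse-square potentials from \cite{EF1,EF2} (Proposition~\ref{ef}) are invoked. A direct Birman--Schwinger argument on the full trace does not appear in the paper, and it is not clear how it could reproduce the endpoint $\gamma=(1-a)/2$ bound.

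Second, for Part~(2), ``lifting from the already established $a=d-1$ endpoint'' cannot work as stated. Aizenman--Lieb lifting increases $\gamma$ while preserving the power of the weight $g_0$, so starting from $a=d-1$ (weight $g_0^1$) it only produces further estimates with the same $a=d-1$, never with $a>d-1$. The threshold $\gamma>(1+a)(2-d)/(2d)$ in Part~(2) is not attained, so there is also no endpoint to lift from once $a>d-1$. In the paper, Part~(2) is obtained by the same interlacing argument: the Dirichlet part already holds (with strict inequality $\gamma>0$ when $a\geq 1$) by Theorem~\ref{mainA0dirichlet}, and the lowest-eigenvalue part requires exactly the condition $\gamma>(1+a)(2-d)/(2d)$ coming from Theorem~\ref{gn1} Part~\eqref{it:gnbetalarge} via the duality of Proposition~\ref{duality}. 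To repair your proposal, replace both the Birman--Schwinger step and the ``lift in $a$'' step by this interlacing decomposition plus the Dirichlet LT bound of Theorem~\ref{mainA0dirichlet}.
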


One can prove that our conditions on $\gamma$ are not only sufficient
but (except for the limiting case in Part (2)) also necessary for the
validity of \eqref{LT-basic}. This is further discussed in Subsection
\ref{sec:discussion}. Part (3) is in fact an immediate consequence of
Corollary \ref{clrcor} and an argument by Aizenman and Lieb
\cite{AL}. It is stated here for the sake of completeness. 

If the branching function $g_0$ grows `very' fast, the Laplacian
$-\Delta_\Neumann$ is positive definite. In this case it is reasonable
not only to estimate the number of negative eigenvalues of
$-\Delta_\Neumann-V$, but also the number of eigenvalues less then the
bottom of the spectrum of $-\Delta_\Neumann$. We carry through this
analysis for a special class of trees. 

A regular metric tree is called \emph{homogeneous} if all the edges
have the same length $\tau$ and if the branching number $b_k=b>1$ is
independent of $k$. Homogeneous trees correspond intuitively to trees
of infinitely large global dimension. By scaling it is no loss of
generality to assume that $\tau=1$. The branching function $g_0$ 
then reads
\begin{equation*}
g_0(t) = b^j, \qquad j< t \leq j+1, \quad j\in\N_0\, .
\end{equation*}
The Laplacian $-\Delta_\Neumann$ (or rather its Dirichlet version) on
a homogeneous tree was studied in \cite{SS}. It follows from the
analysis there that $-\Delta_\Neumann$ is positive definite and its
essential spectrum starts at 
\begin{align*}
\lambda_b = \left(\arccos \frac 1{R_b} \right)^2,
\qquad R_b = \frac{b^{\frac 12}+b^{-\frac 12}}{2}\, .
\end{align*}
We shall prove

\begin{theorem}[\textbf{CLR bounds for homogeneous trees}]\label{homo}
  Let $\Gamma$ be a homogeneous tree with edge length $1$ and
  branching number $b>1$ and let $w:\R_+\to\R_+$ be a positive
  function such that for some $2<q\leq\infty$ 
  \begin{equation*}
    M:= \sup_{t\geq 0} \, (1+t)^{-1} 
    \left(\int_0^t (1+s)^q w^{-\frac{q-2}2} \,ds\right)^{2/q}.
  \end{equation*}
  Let $p = q/(q-2)$. Then there exists a constant $N_p(b,w)$ such that
  \begin{equation}\label{eq:homo}
    N(-\Delta_\Neumann - V-\lambda_b) 
    \leq N_p(b,w) \int_\Gamma V(|x|)_+^p\, w(|x|)\,dx
  \end{equation}
  for all symmetric $V$. Moreover, the sharp constant in
  \eqref{eq:homo} satisfies 
  \begin{equation}
    N_p(b,w) \leq C(b)\, (1+p')^{p-1}\left(1+\frac1{p'}\right)^p M^p
  \end{equation}
  with some constant $C(b)$ depending only on $b$.
\end{theorem}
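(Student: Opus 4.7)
The plan is to reduce the problem on the homogeneous tree to a family of one-dimensional half-line Schr\"odinger problems via the Naimark--Solomyak symmetry decomposition \cite{NS1, NS2}, and then apply a one-dimensional CLR-type inequality in the spirit of Theorem \ref{finite-rh}, adapted to the fact that we count eigenvalues below $\lambda_b$ rather than below zero.

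Since $V$ is symmetric and $\Gamma$ is regular, a standard symmetry argument unitarily identifies $-\Delta_\Neumann - V$ with an orthogonal direct sum
\begin{equation*}
(A_0 - V)\oplus \bigoplus_{k\geq 1} (A_k - V)^{\oplus m_k},
\end{equation*}
where $A_0$ acts on $L_2(\R_+, g_0\, dt)$ with Neumann condition at $0$; for $k\geq 1$, $A_k$ acts on $L_2((k,\infty), b^{\lfloor t\rfloor - k}\, dt)$ with Dirichlet condition at $t=k$; and $m_0 = 1$, $m_k = (b-1)b^{k-1}$. By homogeneity each $A_k$, $k\geq 1$, is a translate of a fixed one-dimensional operator. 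The spectral analysis of \cite{SS} ensures that every unperturbed $A_k$ is non-negative with purely absolutely continuous essential spectrum $[\lambda_b, \infty)$ and no eigenvalue below $\lambda_b$, so that counting eigenvalues of $-\Delta_\Neumann - V$ below $\lambda_b$ reduces to counting, for each $k$, eigenvalues of $A_k - V$ below $\lambda_b$, weighted by the multiplicity $m_k$.

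Next I would estimate $N(A_k - V - \lambda_b)$ by adapting the duality scheme of Proposition \ref{duality}. The key input is the structure of $(A_k - \lambda_b)^{-1}$: because $\lambda_b$ lies at the bottom of a spectral band of the periodic-coefficient recurrence defining $A_k$, the two independent solutions of $A_k u = \lambda_b u$ grow at most linearly in $t$, so the Green's function obeys a polynomial bound whose effective envelope replaces the exponential weight $g_0$ by the polynomial $(1+t)^2$. Equivalently, after the unitary transformation $\varphi\mapsto b^{(\lfloor t\rfloor - k)/2}\varphi$ that converts the weighted $L_2$ into an unweighted one, the counting function for eigenvalues below $\lambda_b$ is dominated by that of a one-dimensional Schr\"odinger-type operator whose associated reduced height $\int_t^\infty ds/(1+s)^2 \sim (1+t)^{-1}$ is exactly the factor entering the hypothesis on $M$. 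Applying the one-dimensional CLR inequality of Theorem \ref{finite-rh} with $g_0$ replaced by $(1+t)^2$ yields
\begin{equation*}
N(A_k - V - \lambda_b) \leq C(b)\,(1+p')^{p-1}\left(1+\tfrac{1}{p'}\right)^p M^p \int_k^\infty V(t)_+^p\, w(t)\, dt.
\end{equation*}

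Finally, summing over $k\geq 0$ with multiplicities $m_k \sim b^k$ and using $g_0(t) \sim b^k$ for $t\in(k,k+1]$, one checks by a geometric rearrangement
\begin{equation*}
\sum_{k\geq 0} m_k \int_k^\infty V_+^p\, w\, dt \leq C(b) \int_\Gamma V(|x|)_+^p\, w(|x|)\, dx,
\end{equation*}
where the overlap of the intervals $(k,\infty)$ against the exponentially growing multiplicities collapses to a $b$-dependent constant. The main obstacle will be the polynomial Green's function bound at the band-edge energy $\lambda_b$: at a band edge the transfer matrix of the underlying recurrence degenerates from hyperbolic to parabolic, so the usual exponentially decaying Jost construction fails, and one must explicitly identify the linear-in-$t$ solutions that are responsible for the $(1+t)$ factor in the hypothesis $M<\infty$. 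Tracking the $b$-dependence of the constants through this degenerate band-edge analysis, and verifying that the resulting weight is $(1+t)^2$ rather than the naive exponential dictated by the volume growth of $\Gamma$, is the technical heart of the argument.
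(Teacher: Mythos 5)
Your overall plan---symmetry decomposition, a one-dimensional estimate for each mode, and resummation---is the same as the paper's, and your identification of the technical crux (linear growth of the generalized ground state at the band edge $\lambda_b$, so that the effective weight is $(1+t)^2$ rather than the exponential $g_0$) is precisely the content of Lemma~\ref{efgrowth}. However, the one-dimensional mechanism you describe does not by itself deliver a CLR bound. Proposition~\ref{duality} controls only the \emph{lowest} eigenvalue, $\sup\spec(A_g-V)_-^\gamma$; it cannot bound the full counting function $N(A_g-V-\lambda_b)$. Likewise, the unitary $\varphi\mapsto b^{(\lfloor t\rfloor-k)/2}\varphi$ converts the weighted $L_2(g\,dt)$ into an unweighted one but introduces singular interface terms at the integers and, more importantly, does nothing to absorb the energy shift $\lambda_b$; the Green's-function heuristic you sketch after that is plausible but is not actually carried out. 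The ingredient that closes the argument in the paper is Theorem~\ref{positive}, whose proof rests on the ground-state representation (Lemma~\ref{gsr}): the substitution $f=\omega h$ converts
\begin{equation*}
\int_0^\infty |f'|^2 g\,dt \;-\; \lambda_b\int_0^\infty |f|^2 g\,dt
\;=\;\int_0^\infty |h'|^2\,\omega^2 g\,dt ,
\end{equation*}
exactly, and the Sturm-oscillation argument of Theorem~\ref{clr} then applies in the space $L_2(\R_+,\omega^2 g\,dt)$ with the new weight $\omega^2 g_0\asymp (1+t)^2$ furnished by Lemma~\ref{efgrowth}; this is where $\int_t^\infty ds/(1+s)^2\sim(1+t)^{-1}$, and hence your quantity $M$, enters. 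Two smaller points: the ``one-dimensional CLR inequality of Theorem~\ref{finite-rh}'' you invoke is really Theorem~\ref{clr} (Theorem~\ref{finite-rh} is the tree statement), and you do not need a separate estimate for each $\A_k$: the variational comparison \eqref{majortrace}, obtained by extending test functions by zero, reduces every $\A_k - \chi_{(t_k,\infty)}V$ to $\A_0 - \chi_{(t_k,\infty)}V$, after which the resummation you sketch is an exact identity, $\sum_{k\leq t}m_k=g_0(t)$, reproducing $\int_\Gamma V_+^p w\,dx$ with no extra $b$-dependent loss. The only $b$-dependence in the constant then comes from $C_1,C_2$ in Lemma~\ref{efgrowth}.
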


Choosing $w(t)=(1+t)^a$ we obtain the following strengthening of
Corollary \ref{clrcor}. 

\begin{corollary}
  Let $\Gamma$ be a homogeneous tree with edge length $1$ and
  branching number $b>1$. Then for any $a\geq 1$ there exists a
  constant $C(a,b)$ such that for any symmetric $V$ 
  \begin{equation*}
    N(-\Delta_\Neumann - V-\lambda_b) 
    \leq C(a,b) \int_\Gamma V(|x|)_+^{\frac{1+a}2}\, (1+|x|)^a \,dx\, .
  \end{equation*}
\end{corollary}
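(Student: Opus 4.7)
The plan is to deduce this corollary directly from Theorem \ref{homo} by plugging in the weight $w(t)=(1+t)^a$ and verifying that the integrability constant $M$ is finite. The main task is to choose the exponent $q$ so that the advertised power $p=(1+a)/2$ on $V_+$ matches $p=q/(q-2)$, then to carry out the scaling computation showing that the supremum defining $M$ is indeed bounded.

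Concretely, for $a>1$ I would set
\[
  q \;=\; \frac{2(a+1)}{a-1}, \qquad p \;=\; \frac{q}{q-2} \;=\; \frac{a+1}{2}.
\]
With $w(s)=(1+s)^a$, the integrand in the definition of $M$ becomes
\[
  (1+s)^q w(s)^{-\frac{q-2}{2}} \;=\; (1+s)^{q-\frac{a(q-2)}{2}},
\]
and a short algebraic computation gives
\[
  q-\frac{a(q-2)}{2} \;=\; \frac{q(2-a)+2a}{2} \;=\; \frac{2}{a-1}.
\]
Integrating from $0$ to $t$ produces a quantity comparable to $(1+t)^{(a+1)/(a-1)}$, and raising this to the power $2/q=(a-1)/(a+1)$ yields a quantity comparable to $1+t$. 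Multiplication by the outer factor $(1+t)^{-1}$ then leaves a bounded function of $t$, so $M<\infty$. Theorem \ref{homo} now applies with this choice of $q$ and gives the stated inequality, with the constant $C(a,b)$ obtained by combining $C(b)$, the numerical constant $(1+p')^{p-1}(1+1/p')^p$, and $M^p$.

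The endpoint $a=1$, where the above formula forces $q=\infty$, is handled separately by taking the natural $L^\infty$-interpretation of the condition: the inner expression becomes $\|(1+s)^{p}\,w(s)^{-1/2}\|_{L^\infty([0,t])}^{2}=\|(1+s)^{1/2}\|_{L^\infty([0,t])}^{2}=1+t$, and again multiplication by $(1+t)^{-1}$ yields $M=1$. No serious obstacle is expected; the one bookkeeping point deserving care is simply that the exponent $2/(a-1)$ degenerates at $a=1$, which is precisely why that case matches the $q=\infty$ branch of Theorem \ref{homo} rather than the $q<\infty$ branch.
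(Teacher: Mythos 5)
Your proposal is correct and is precisely the computation the paper leaves implicit: the corollary is obtained from Theorem \ref{homo} by setting $w(t)=(1+t)^a$, choosing $q=2(a+1)/(a-1)$ so that $p=q/(q-2)=(a+1)/2$, and checking that the integrand in $M$ reduces to $(1+s)^{2/(a-1)}$, whence the bracket is comparable to $1+t$ and $M<\infty$, with the $a=1$ case handled via the $q=\infty$ reading of the condition. Same approach as the paper.
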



\subsection{Discussion}\label{sec:discussion}

In this subsection we discuss the inequality \eqref{LT-basic} and the
conditions for its validity given in Theorem \ref{th:mainTree}. 

\begin{remark}[\textbf{Strong coupling limit}]
  Inequality \eqref{LT-basic} with $a=0$ coincides with \eqref{eq:nonsym},
  \begin{equation*}
    \tr (-\Delta_\Neumann - V)_-^{\gamma} 
    \leq L_\gamma
    \int_{\Gamma} V(|x|)_+^{\gamma + \frac{1}{2}} \, dx,
    \quad \gamma\geq \frac12.
  \end{equation*}
  This inequality reflects the correct behavior in the strong coupling
  limit. Indeed, if $V$ is, say, continuous and of compact support
  then standard Dirichlet-Neumann bracketing \cite[Thm. XIII.80]{RS4}
  leads to the Weyl-type asymptotic formula 
\begin{equation} \label{strong} 
    \lim_{\alpha\to \infty}
    \alpha^{-\gamma-\frac 12} \tr\left(-\Delta_\Neumann - \alpha
      V\right)_-^\gamma 
    = L_{\gamma,1}^{\cl} \int_{\Gamma} V(|x|)^{\gamma + \frac 12}_+ \, dx, 
    \quad \gamma \geq 0,
\end{equation}
  with
  \begin{equation}
    \label{eq:lclass}
    L_{\gamma,1}^{\cl} 
    := \frac{\Gamma(\gamma+1)}{2\sqrt{\pi}\,\Gamma(\gamma + 3/2)}\, .
  \end{equation}
  This shows in particular that \eqref{LT-basic} can \emph{not} hold for $a<0$.
\end{remark}

\begin{remark}[\textbf{Weak coupling limit}] 
  \label{coupling}
  Assume that $\Gamma$ has global dimension $d\in [1,2)$. Inequality
  \eqref{LT-basic} with $a=d-1$, $\gamma=(2-d)/2$ reads  
  \begin{equation*}
    \tr (-\Delta_\Neumann - V)_-^{\frac{2-d}2} \leq
    C\left(\frac{2-d}2,d-1,\Gamma\right) 
    \int_{\Gamma} V(|x|)_+ g_0(|x|) \, dx. 
  \end{equation*}
  This inequality reflects the correct behavior in the weak coupling
  limit. Indeed, it is shown in \cite{K} that $-\Delta_\Neumann -
  \alpha V$ has at least one negative eigenvalue whenever
  $\int_{\Gamma} V(|x|) \, dx > 0$, and that for $\alpha$ sufficiently
  small this eigenvalue, say $\lambda_1(\alpha)$, is unique and
  satisfies 
  \begin{equation} \label{weak}
    -a_1\, \alpha^{\frac{2}{2-d}}\, \leq \, \lambda_1(\alpha)\, \leq \,
    -a_2\, \alpha^{\frac{2}{2-d}},
    \quad \alpha\to 0,
  \end{equation}
  for suitable constants $a_1\geq a_2>0$ depending on $V$. This fact
  shows also that \eqref{LT-basic} does \emph{not} hold for $1\leq
  d<2$, $a\geq 0$ and $\gamma<(1+a)\frac{2-d}{2d}$. We do not know
  whether \eqref{LT-basic} holds in the endpoint case
  $\gamma=(1+a)\frac{2-d}{2d}$ when $1\leq d<2$ and $a>d-1$. 

  Similarly, when $\Gamma$ has global dimension $d=2$, one can show
  that $-\Delta_\Neumann - \alpha V$ has at least one negative
  eigenvalue whenever $\int_{\Gamma} V(|x|) \, dx > 0$. Hence
  \eqref{LT-basic} does \emph{not} hold for $d=2$, $a\geq 0$ and
  $\gamma=0$.  
\end{remark}

\begin{remark}[\textbf{Dirac-potential limit}]
  As we have seen in the previous remark, the condition
  $\gamma>(1+a)(2-d)/(2d)$ in Part (2) of Theorem \ref{th:mainTree}
  comes from the weak coupling limit. Now we explain that the
  condition $\gamma\geq(1-a)/2$ in Part (1) comes from what may be
  called the Dirac-potential limit. Consider the sequence of
  potentials $V_n = n \chi_{(0,n^{-1})}$. Using a trial function
  supported near the root $o$ one easily proves that $\tr
  (-\Delta_\Neumann - V_n)_-^{\gamma}$ is bounded away from zero
  uniformly in $n$. On the other hand, $\int V_n^{\gamma + \frac{a +
      1}{2}} g_0^{\frac a{(d-1)}} \, dx$ tends to zero if
  $\gamma<(1-a)/2$. This shows that the condition $\gamma\geq
  \frac{1-a}2$ is necessary for the validity of \eqref{LT-basic}. 
\end{remark}

\begin{remark}[\textbf{Slowly decaying potentials}]
  Assume that $V$ is a symmetric function which is locally
  sufficiently regular and obtains the asymptotics $V(t)\sim \alpha
  t^{-s}$ as $t\to\infty$ for some $s>0$, $\alpha>0$. By standard
  methods (see, e.g., \cite[Thm. XIII.6]{RS4}) one shows that the
  operator $-\Delta_\Neumann - V$ has only a finite number of negative
  eigenvalues provided $s>2$. However, the semi-classical expression
  for the number of negative eigenvalues, i.e. the right hand side of
  \eqref{strong} with $\gamma=0$, is only finite under the more
  restrictive condition $s>2d$. Our Corollary \ref{clrcor} with
  sufficiently large $a$ gives a quantitative estimate on the number
  of negative eigenvalues for the whole range of exponents $s>2$ if
  $d>2$. Similarly, in the case $1\leq d\leq 2$ we obtain quantitative
  information about the magnitude of the eigenvalues, which goes
  beyond semi-classics. 
\end{remark}

\begin{remark}[\textbf{Dirichlet boundary conditions}]
  The reader might wonder how our main theorems change, if a Dirichlet
  instead of a Neumann boundary condition is imposed at the root. Let
  $-\Delta_\Dirichlet$ be the self-adjoint operator in $L_2(\Gamma)$
  generated by the quadratic form \eqref{eq:kinetic} with form domain
  $H^1_0(\Gamma):=\{ \phi\in H^1(\Gamma): \ \phi(0)=0 \}$. By the
  variational principle, any bound for $-\Delta_\Neumann-V$ implies a
  bound for $-\Delta_\Dirichlet-V$. However, it turns out that
  inequalities for the latter operator hold for a strictly larger
  range of parameters. Indeed, the analog of Theorems \ref{finite-rh}
  states that the inequality 
 \begin{equation*}
   \tr (-\Delta_\Dirichlet - V)_-^{\gamma} \leq C(\gamma,a,\Gamma)
   \int_{\Gamma} V(|x|)_+^{\gamma + \frac{1 + a}{2}}
   g_0(|x|)^{\frac{a}{d-1}} \, dx. 
 \end{equation*}
 holds provided either $0\leq a<1$ and $\gamma\geq (1-a)/2$, or else
 $a\geq 1$ and $\gamma\geq 0$ and $d\neq 2$, or else $a\geq 1$ and
 $\gamma> 0$ and $d=2$. This follows (except for the statement for
 $\gamma=0$, $1\leq d<2$) from Theorem \ref{mainA0dirichlet}. There is
 also an analog of Theorem \ref{finite-rh} for $-\Delta_\Dirichlet$
 which is obtained by simply interchanging the two intervals of
 integration in the assumption \eqref{eq:clrass}. We omit the
 details. For spectral asymptotics of the operator $-\Delta_\Dirichlet
 - V$ we refer to \cite{NS2}. 
\end{remark}


\subsection{One-dimensional Schr\"odinger operators with
  metric}\label{sec:oned} 

Our symmetry assumptions will allow us to reduce the spectral
analysis of the operator $-\Delta_\Neumann - V$ to the spectral
analysis of a family of one-dimensional Schr\"odinger-type
operators. The main ingredient in the proof of
Theorem~\ref{th:mainTree} will be an inequality for such operators,
which is of independent interest.

We consider a positive, measurable and locally bounded function $g$ on
$[0,\infty)$ and denote by $H^1(\R_+,g)$ the space of all functions
$f\in H^1_\loc(\R_+)$ such that  
\begin{align*}
\int_0^{\infty} \left(|f'(t)|^2 + |f(t)|^2 \right) g(t) \, dt < \infty.
\end{align*}
The quadratic form
\begin{equation}\label{eq:unperturbed}
\int_0^\infty |f'(t)|^2 g(t)\,dt
\end{equation}
with form domain $H^1(\R_+,g)$ defines a self-adjoint operator $A_g$
in $L_2(\R_+,g)$. Note that this operator corresponds to the
differential expression 
$$
A_g=  -g^{-1}\frac{d}{dt}g\frac{d}{dt}\, ,
$$ 
and that
functions $f$ in its domain satisfy Neumann boundary conditions
$f'(0)=0$ at the origin (at least when $g$ is sufficiently regular near $0$).

For our first results we assume that $g$ grows sufficiently fast in
the sense that 
\begin{equation}\label{eq:infinitelength}
  \int_t^\infty \frac{ds}{g(s)}< \infty \qquad \forall\, t >0.
\end{equation}
We shall prove that under this condition the number of negative
eigenvalues of the Schr\"o\-din\-ger operators $A_g-V$ can be estimated in
terms of weighted $L_p$-norms of $V$. More precisely, one has

\begin{theorem}\label{clr}
  Assume \eqref{eq:infinitelength} and let $w:\R_+\to\R_+$ be a
  positive function such that for some $2<q\leq\infty$
  \begin{equation}\label{eq:clrass1d}
    M :=  \sup_{t\geq 0} \left(\int_0^t g(s)^{\frac q2} w(s)^{-\frac
    {q-2}2} \,ds\right)^{2/q}
    \int_t^\infty \frac{ds}{g(s)}  <\infty \ .
  \end{equation}
  Let $p:= q/(q-2)$. Then the inequality
  \begin{equation}\label{eq:clr1d}
    N(A_g-V) \leq C_p(w,g) \int_0^\infty V^p_+ w \,dt
  \end{equation}
  holds for all $V$, and the sharp constant $C_p(w,g)$ in
  \eqref{eq:clr1d} satisfies
  \begin{equation*}
    M^p \leq C_p(w,g) 
    \leq \left(1+p'\right)^{p-1} \left(1+\frac 1 {p'} \right)^p M^p.
  \end{equation*}
  Moreover, if $M=\infty$ then there is no constant $C_p(w,g)$ such
  that \eqref{eq:clr1d} holds for all $V$. 
\end{theorem}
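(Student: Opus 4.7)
The plan is to combine the Sturm oscillation theorem with a sharp weighted Hardy inequality of Muckenhoupt--Bradley type.

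First I would make the operator concrete. Hypothesis \eqref{eq:infinitelength} forces $\int_0^\infty g\,dt = \infty$: by Cauchy--Schwarz applied to any infinite sub-interval, $\int g < \infty$ and $\int g^{-1} < \infty$ cannot both hold. Consequently constants do not belong to $L_2(\R_+, g)$ and $A_g$ has trivial kernel on its form domain. A direct computation using the Neumann condition $u'(0) = 0$ shows that the Green's function of $A_g$ on $L_2(\R_+, g)$ is
\begin{equation*}
G(t, s) = F(\max(t, s)),
\qquad F(t) := \int_t^\infty \frac{d\sigma}{g(\sigma)}.
\end{equation*}

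The central analytic ingredient is the weighted Hardy inequality
\begin{equation*}
\left( \int_0^\infty |f|^q\, g^{q/2}\, w^{-(q-2)/2}\,dt \right)^{2/q}
\leq C_M^2 \int_0^\infty |f'|^2\, g\,dt
\end{equation*}
for $f \in H^1(\R_+, g)$ with $f(\infty) = 0$. I would prove this by the change of variables $\tau = F(t)$, which transforms the right-hand side into $\int_0^\ell |\tilde f'(\tau)|^2\,d\tau$ on the interval $(0, \ell)$ with $\ell = F(0+) \in (0, \infty]$, the new function $\tilde f$ vanishing at $\tau = 0$. In the new coordinate the left-hand side becomes a standard weighted $L^q$-norm, and the inequality is a classical Hardy--Sobolev embedding whose validity is governed by the Muckenhoupt--Bradley condition. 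Translating the Muckenhoupt constant back to the original coordinate yields exactly $M^{1/2}$, and the sharp Bradley refinement gives $M^{1/2} \leq C_M \leq (1+p')^{(p-1)/(2p)}(1+1/p')^{1/2} M^{1/2}$. Combining this with H\"older's inequality with conjugate exponents $p$ and $p' = q/2$ produces the quadratic-form estimate
\begin{equation*}
\int V|f|^2 g\,dt \leq C_M^2 \left( \int V_+^p w\,dt \right)^{1/p} \int |f'|^2 g\,dt.
\end{equation*}

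For the counting step I would invoke the Sturm oscillation theorem: the number of negative eigenvalues $N(A_g - V_+)$ equals the number of zeros in $(0, \infty)$ of the solution $u$ of $-(gu')' = V_+ g u$ with $u(0) = 1, u'(0) = 0$. Label the zeros $t_1 < t_2 < \cdots < t_N$ and consider the sub-intervals $I_1 = (0, t_1), I_2 = (t_1, t_2), \ldots, I_N = (t_{N-1}, t_N)$. Multiplying the ODE by $u \chi_{I_k}$ and integrating by parts (using $u'(0) = 0$ at the left endpoint of $I_1$ and $u = 0$ at the remaining endpoints) yields
\begin{equation*}
\int_{I_k} |u'|^2 g\,dt = \int_{I_k} V_+ u^2 g\,dt.
\end{equation*}
The extension by zero of $u\chi_{I_k}$ belongs to $H^1(\R_+, g)$ and vanishes at infinity, so the form bound applies and gives $1 \leq C_M^2 \bigl(\int_{I_k} V_+^p w\,dt\bigr)^{1/p}$, i.e., $\int_{I_k} V_+^p w\,dt \geq C_M^{-2p}$. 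Summing over $k$ produces
\begin{equation*}
N \leq C_M^{2p} \int_0^\infty V_+^p w\,dt = (1+p')^{p-1}\bigl(1+1/p'\bigr)^p M^p \int_0^\infty V_+^p w\,dt,
\end{equation*}
which, together with $N(A_g - V) \leq N(A_g - V_+)$, establishes the claimed upper bound on $C_p(w, g)$.

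For the lower bound $C_p(w, g) \geq M^p$ and the final necessity statement, I would construct trial potentials. Fix $t^*$ nearly realizing the supremum defining $M$, and take $V = \lambda\, g^{q/2-1} w^{-(q-2)/2} \chi_{[0, t^*]}$ with $\lambda$ tuned so that the Birman--Schwinger operator $V^{1/2} A_g^{-1} V^{1/2}$ has an eigenvalue equal to $1$; direct computation shows $\int V^p w\,dt$ is essentially $M^{-p}$, forcing $C_p(w, g) \geq M^p - \varepsilon$. When $M = \infty$, iterating this construction on disjoint sub-intervals of $(0, \infty)$ produces potentials with arbitrarily many negative eigenvalues yet $\int V^p w$ remaining bounded, ruling out any finite $C_p(w, g)$. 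The main obstacle will be extracting the sharp Bradley constant for the weighted Hardy inequality in the precise form $(1+p')^{(p-1)/(2p)}(1+1/p')^{1/2} M^{1/2}$, which requires careful application (or reproof) of the Muckenhoupt--Bradley theorem in the required generality; a secondary delicate point is justifying the Sturm count for the mild regularity class of potentials admissible in the theorem.
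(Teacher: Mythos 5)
Your proposal is correct and follows essentially the same route as the paper: a weighted Hardy--Sobolev inequality of Muckenhoupt--Bradley type combined with the Sturm oscillation theorem (integrating by parts between consecutive zeros of a solution of $-(gu')'=V_+gu$ and applying the Hardy bound with H\"older on each nodal interval). The only cosmetic differences are that the paper cites Proposition \ref{mazya} (with Opic's constant) directly rather than rederiving it by the change of variables $\tau=F(t)$, and handles the lower bound $C_p(w,g)\geq M^p$ and the necessity of $M<\infty$ by appealing to a duality argument (Theorem \ref{lowest} / Proposition \ref{duality}) rather than by the Birman--Schwinger trial-potential construction you sketch; both routes are valid and yield the same conclusion.
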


By definition, if $q=\infty$ condition \eqref{eq:clrass1d} is
understood as 
\begin{equation*}
M:=\sup_{t\geq0} \left(\sup_{0\leq s\leq t} \,\frac{g(s)}{w(s)}\right)
\int_t^\infty \frac{ds}{g(s)}  <\infty,
\end{equation*}
and the \emph{sharp} constant is $C_1(w,g)=M$. This leads to the following
beautiful estimate. 

\begin{example}\label{ex:clrsharp}
  Taking $w(t)=g(t)\int_t^\infty g^{-1}(s)\,ds$ and $q=\infty$ one obtains
  \begin{equation} \label{ex:sharp}
    N(A_g-V) 
    \leq \int_0^\infty V(t)_+ \,g(t) \left(\int_t^\infty
      \frac{ds}{g(s)}\right) \,dt\, , 
  \end{equation}
  which is sharp (meaning that the estimate is no longer true for all
  $g$ and all $V$ if the right hand side is multiplied by a constant
  less than one). As a consequence one also finds 
  \begin{equation*}
    N(A_g-V) 
    \leq \int_0^\infty \frac{dt}{g} \int_0^\infty V_+ g \,dt\, .
  \end{equation*}
\end{example}

Theorem \ref{clr} gives a complete characterization of weights for
which the number of negative eigenvalues can be estimated by a
weighted norm of the potential. When $g$ grows very fast, the operator
$A_g$ will be positive definite and in this case one may not only ask
for the number of eigenvalues of $A_g-V$ below $0$ but also below the
bottom of the spectrum of $A_g$. We turn to this question next. We
assume, in addition to \eqref{eq:infinitelength}, that 
\begin{equation}
  \label{eq:posdef1d}
  \sup_{t> 0} \int_0^t g(s)\,ds \int_t^\infty \frac{ds}{g(s)} < \infty.
\end{equation}
This condition is necessary and sufficient for the
operator $A_g$ to be positive definite, see Proposition \ref{mazya}
below or \cite[Thm. 5.2]{S}. We 
denote the bottom of its spectrum by $\lambda(A_g)>0$ and assume that
$\lambda(A_g)$ is \emph{not} an eigenvalue of $A_g$. Let $\omega$ be the
unique (up to a constant) distributional solution of the differential equation 
\begin{equation}\label{eq:gs}
  -(g \omega')' = \lambda(A_g)\, g\, \omega \qquad \text{on} \ \R_+
\end{equation}
satisfying the boundary condition $\omega'(0)=0$. Since $\lambda(A_g)$
is not an eigenvalue, the function $\omega$ is not square-integrable
with respect to the weight $g$. We quantify the growth of $\omega^2 g$ by
assuming that
\begin{equation}
  \label{eq:growth1d}
  \int_0^\infty \omega^{-2}  g^{-1}\,ds<\infty .
\end{equation}
Under these conditions one has

\begin{theorem} 
  \label{positive}
  Assume \eqref{eq:infinitelength}, \eqref{eq:posdef1d} and
  \eqref{eq:growth1d}. Let $w:\R_+\to\R_+$ be a positive function such
  that for some $2<q\leq\infty$ 
  \begin{equation*}
    M :=    \sup_{t>0} \left(\int_0^t \omega^q g^{\frac q2} w^{-\frac
    {q-2}2} \,ds\right)^{2/q} 
    \int_t^\infty \omega^{-2} g^{-1}\,ds < \infty,
  \end{equation*}
  and put $p := \frac{q}{q-2}$. Then the inequality
  \begin{equation}
    N(A_g-V-\lambda(A_g)) \leq C_p(w,g,\omega) \int_0^\infty V^p_+ w\,dt
  \end{equation}
  holds for all $V$, and the sharp constant $C_p(w,g,\omega)$ satisfies
  \begin{equation}
    M^p \leq C_p(w,g,\omega) 
    \leq \left(1+p'\right)^{p-1} \left(1+\frac 1 {p'} \right)^p M^p.
  \end{equation}
\end{theorem}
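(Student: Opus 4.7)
The plan is to reduce Theorem~\ref{positive} to Theorem~\ref{clr} via a ground-state (Agmon) substitution. Since $\lambda(A_g)>0$ is the bottom of the spectrum and, by hypothesis, is not an eigenvalue, standard Sturm--Liouville theory guarantees that the solution $\omega$ of \eqref{eq:gs} with $\omega'(0)=0$ can be chosen strictly positive on $[0,\infty)$ and is locally $H^2$. Setting $\tilde g:=\omega^2 g$, the map $U:u\mapsto \omega u$ is then a unitary isomorphism $L_2(\R_+,\tilde g)\to L_2(\R_+,g)$.

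The central identity to establish is
\begin{equation*}
\int_0^\infty |f'|^2\, g\,dt-\lambda(A_g)\int_0^\infty |f|^2\, g\,dt
=\int_0^\infty |u'|^2\, \omega^2 g\,dt, \qquad f=\omega u.
\end{equation*}
This follows by expanding $|(\omega u)'|^2=(\omega')^2u^2+2\omega\omega'uu'+\omega^2(u')^2$, integrating the cross term by parts, and using \eqref{eq:gs} together with $\omega'(0)=0$ to cancel the resulting $(\omega')^2$ term. The boundary contribution at infinity vanishes first for compactly supported $u$ and then, by approximation, on the whole form domain. Because the multiplication operator $V$ commutes with $U$, this yields the unitary equivalence of quadratic forms
\begin{equation*}
U^{-1}\bigl(A_g-V-\lambda(A_g)\bigr)\,U = A_{\tilde g}-V
\quad\text{in } L_2(\R_+,\tilde g),
\end{equation*}
and hence $N(A_g-V-\lambda(A_g))=N(A_{\tilde g}-V)$.

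Now one simply checks that the hypotheses of Theorem~\ref{clr} are satisfied by $\tilde g$. Condition \eqref{eq:infinitelength} for $\tilde g$ reads $\int_t^\infty \omega^{-2}g^{-1}\,ds<\infty$, which is precisely \eqref{eq:growth1d}. Condition \eqref{eq:clrass1d} for $\tilde g$ with weight $w$ becomes
\begin{equation*}
\sup_{t>0}\Bigl(\int_0^t \omega^q g^{q/2} w^{-\frac{q-2}{2}}\,ds\Bigr)^{2/q}
\int_t^\infty \omega^{-2}g^{-1}\,ds,
\end{equation*}
which is the quantity $M$ appearing in Theorem~\ref{positive}. Applying Theorem~\ref{clr} to $A_{\tilde g}-V$ yields at once the upper bound $C_p(w,g,\omega)\leq (1+p')^{p-1}(1+1/p')^p M^p$, and the lower bound $M^p\leq C_p(w,g,\omega)$ is inherited similarly from the corresponding lower bound in Theorem~\ref{clr}.

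The main technical obstacle is the rigorous execution of the substitution, namely: ensuring positivity and local regularity of $\omega$, justifying the vanishing of boundary terms at infinity (which uses that $\omega$ generates the bottom of the spectrum), and verifying that $U$ maps a form core of $A_{\tilde g}$ onto a form core of $A_g-\lambda(A_g)$ so that the identification of quadratic forms is genuinely at the level of self-adjoint operators. Once this ground-state transformation is in place, everything else is a direct translation of Theorem~\ref{clr}.
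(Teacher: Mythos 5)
Your proof is correct and matches the paper's approach exactly: the ground-state representation (stated in the paper as Lemma \ref{gsr} and proved via the identity $|(\omega h)'|^2=\omega^2|h'|^2+\omega'(\omega|h|^2)'$) combined with Glazman's lemma reduces the problem to $N(A_{\tilde g}-V)$ with $\tilde g=\omega^2 g$, and the conditions \eqref{eq:infinitelength} and \eqref{eq:clrass1d} for $\tilde g$ translate precisely into \eqref{eq:growth1d} and the quantity $M$ of the theorem. The paper is terser about the technical points you flag (positivity of $\omega$, boundary terms, form cores), but the argument is the same.
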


Finally, we present some estimates without imposing the condition
\eqref{eq:infinitelength}. It is easy to see that if the integral in
\eqref{eq:infinitelength} is infinite, then $A_g-V$ will have a
negative eigenvalue for any non-negative $V\not\equiv 0$, hence no
estimate on the number of eigenvalues in terms of norms of $V$ can
hold. Below we shall prove that estimates on \emph{moments} of
eigenvalues do hold. For the sake of simplicity we restrict ourselves
to the case where $g$ has power-like growth, i.e.,  
\begin{align}\label{eq:gpower}
    0 < c_1:= \inf_{t>0} \frac{g(t)}{(1+t)^{d-1}} \leq 
    \sup_{t>0} \frac{g(t)}{(1+t)^{d-1}} =: c_2 <\infty
\end{align}
for some $d\geq 1$. Note that \eqref{eq:infinitelength} holds iff $d>2$. We shall consider inequalities of the form
\begin{equation} \label{LT-individual}
\tr (A_g - V)_-^{\gamma} \leq L \int_0^{\infty}
V(t)_+^{\gamma + \frac{a + 1}{2}} (1 + t)^a \, dt,
\qquad L=L(\gamma,a,d,c_1,c_2).
\end{equation}
In Remark \ref{exprelation} below we show that the relation between
the exponent of $V$ and that of the weight $(1+t)$ can not be
improved. Our result is 

\begin{theorem}
  \label{mainA0}
  Assume \eqref{eq:gpower} for some $d\geq 1$.
  \begin{enumerate}
  \item
    \label{it:smalla}
    Let either $1\leq d <2$ and $0\leq a\leq d-1$, or else $d\geq 2$
    and $0\leq a<1$. Then \eqref{LT-individual} holds iff $\gamma\geq
    (1+a)/2$. 
  \item
    \label{it:largea}
    Let either $1\leq d <2$ and $a> d-1$, or else $d=2$ and $a\geq
    1$. Then \eqref{LT-individual} holds iff $\gamma>
    (1+a)(2-d)/(2d)$. 
  \item
    \label{it:clr}
    Let $d> 2$ and $a\geq 1$. Then \eqref{LT-individual} holds for any
    $\gamma\geq 0$. 
  \end{enumerate}
\end{theorem}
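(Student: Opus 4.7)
The proof of Theorem \ref{mainA0} falls naturally into three parts matching the statement, handled in order of increasing difficulty.

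\emph{Part (3) [$d>2$, $a\geq 1$].} Here condition \eqref{eq:infinitelength} is automatic, so Theorem~\ref{clr} is available. I would take $w(t)=(1+t)^a$ and the exponent $p=(1+a)/2$, so that the dual $q=2(1+a)/(a-1)$ (with $q=\infty$ when $a=1$). Under \eqref{eq:gpower} a direct computation gives
\[
\bigg(\int_0^t g^{q/2}w^{-(q-2)/2}\,ds\bigg)^{2/q}\lesssim(1+t)^{d-2},\qquad \int_t^\infty\frac{ds}{g(s)}\lesssim(1+t)^{2-d},
\]
so that the product $M$ in \eqref{eq:clrass1d} is bounded. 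Theorem~\ref{clr} yields the $\gamma=0$ case, and the Aizenman--Lieb principle \cite{AL} then lifts this to all $\gamma>0$.

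\emph{Part (1) [$\gamma\geq(1+a)/2$].} By Aizenman--Lieb it suffices to establish the endpoint $\gamma=(1+a)/2$, namely
\[
\tr(A_g-V)_-^{(1+a)/2}\leq L\int_0^\infty V_+^{1+a}(1+t)^a\,dt.
\]
For $a=0$ this reduces to a standard 1D Neumann Lieb--Thirring--Weidl bound, which I would derive by dyadic localisation of the coefficient $g$ (comparable to a constant on each interval $[2^k,2^{k+1})$) followed by summation of the classical unweighted bound. For $a>0$ in the allowed range I plan to use the isometry $U\colon L^2(g\,dt)\to L^2(dt)$, $Uf=g^{1/2}f$, which conjugates $A_g$ to $-\partial_t^2+W_g$ with the Hardy-type potential $W_g=g''/(2g)-(g')^2/(4g^2)\sim(d-1)(d-3)/(4(1+t)^2)$. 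This term is critical at $d=2$ and sub- or supercritical otherwise; combining the associated Hardy inequality with a weighted Gagliardo--Nirenberg--Sobolev estimate on $\R_+$, then invoking the duality argument of Proposition~\ref{duality}, produces the endpoint bound. The restriction $0\leq a\leq d-1$ (for $d<2$) or $0\leq a<1$ (for $d\geq 2$) is precisely what keeps the resulting Hardy--Sobolev constant finite.

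\emph{Part (2) [$\gamma>\gamma_\ast:=(1+a)(2-d)/(2d)$].} I plan an optimised localisation argument. Decouple the operator at a cut-off $T>0$ by the standard Dirichlet rank-one perturbation (which alters the trace only by a bounded quantity). Estimate the eigenvalue moments on $(0,T)$ by Part~(1) at $a=0$, and on $(T,\infty)$ by Theorem~\ref{clr} (or Theorem~\ref{positive} in the case $d=2$): the hypothesis $a>d-1$ is precisely what makes the weighted CLR condition \eqref{eq:clrass1d} hold on the tail, even though $\int g^{-1}=\infty$ on the full half-line. Optimising $T$ as a suitable power of $\|V\|$ yields \eqref{LT-individual}; the exponent $\gamma_\ast$ appears as the value at which this optimisation degenerates, which explains why the inequality is strict. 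Necessity in Parts~(1) and~(2) follows from the Dirac-potential test $V_n=n\chi_{(0,1/n)}$ and the weak-coupling asymptotics of \cite{K} respectively, paralleling the arguments given in the remarks preceding the theorem.

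\emph{Main obstacle.} The central difficulty is the endpoint of Part~(1) for $a>0$. The critical or near-critical Hardy term $W_g$ rules out a direct appeal to the unweighted 1D Lieb--Thirring inequality, and extracting the precise weight $(1+t)^a$ on the right-hand side requires combining an Ekholm--Frank-flavoured virtual-dimension bound with the weighted Sobolev machinery dualised through Proposition~\ref{duality}. Getting the correct range of $a$ and the correct constant simultaneously is the delicate part of the whole proof.
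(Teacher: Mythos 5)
Your outline of Part (3) is correct and coincides with the paper's approach (Theorem~\ref{clr} at $\gamma=0$ plus Aizenman--Lieb). The remaining two parts, however, contain genuine gaps.

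The main missing idea is the Dirichlet--Neumann interlacing. The paper writes
\begin{equation*}
\tr (A_g - V)_-^\gamma \;\leq\; \sup\spec\big((A_g - V)_-^\gamma\big) \;+\; \tr(A_\Dirichlet - V)_-^\gamma,
\end{equation*}
and estimates the two terms separately: the first via the duality Proposition~\ref{duality} combined with the Sobolev interpolation inequalities of Section~\ref{sec:gn} (Theorem~\ref{lowest}), the second via Lemma~\ref{auxiliary} (comparing $A_\Dirichlet$ with the Hardy operator $\B_\Dirichlet$) and the Ekholm--Frank result Proposition~\ref{ef}. Your Part~(1) argument invokes Proposition~\ref{duality} as if it produced the moment bound, but that proposition is a statement only about $\sup\spec\big((A_g-V)_-^\gamma\big)$, i.e.\ about the lowest eigenvalue. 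By itself it cannot control $\tr(A_g-V)_-^\gamma$; the interlacing step and the separate Dirichlet moment bound are what bridge that gap, and both are absent from your plan. Your conjugation $U f = g^{1/2} f$ is in the right spirit (it is essentially how the paper relates $A_\Dirichlet$ to $\B_\Dirichlet$ in \eqref{quadraticformb}), but note that for a step-function $g$ the formula $W_g = g''/(2g) - (g')^2/(4g^2)$ is not defined; the paper sidesteps this via the two-sided comparison \eqref{eigenvalue-ineq}, which only uses the bounds \eqref{eq:gpower}.

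Two further steps as you describe them would fail. First, the dyadic localisation for $a=0$, $\gamma=\tfrac12$: after inserting Neumann conditions at the dyadic points, each block $[2^k,2^{k+1})$ carries a Neumann--Neumann operator that has a zero mode; for small $V\geq 0$ its ground state energy scales like $-\int V$ (not $-\big(\int V\big)^2$ as on the half-line), so $\tr(\cdot)_-^{1/2} \sim \big(\int V\big)^{1/2}$ which is not controlled by $\int V$. The $\gamma=1/2$ LT inequality on bounded intervals with Neumann at both ends does not hold with a uniform constant, so the block sum does not close. Second, in Part~(2) you propose to use Theorem~\ref{clr} (or~\ref{positive}) on the tail $(T,\infty)$. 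But Part~(2) has $d\leq 2$, so $\int_t^\infty g^{-1}\,ds=\infty$ for every $t$; the hypothesis \eqref{eq:infinitelength} (and hence \eqref{eq:clrass1d}) fails on any tail, not just on the full half-line. Your remark that ``$a>d-1$ makes the weighted CLR condition hold on the tail'' conflates the weight condition with the integrability of $g^{-1}$: the latter is a property of $g$ alone and is violated here. The paper avoids this issue entirely because the Dirichlet operator on $\R_+$ is controlled directly by Proposition~\ref{ef}, whose validity has nothing to do with $\int g^{-1}$.

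Your identification of the necessity arguments (Dirac-potential limit and the weak-coupling asymptotics from \cite{K}) is correct and matches the paper's discussion and the ``only if'' part of Theorem~\ref{lowest}.
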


Part \eqref{it:clr} is of course a consequence of Theorem \ref{clr}
(for $\gamma=0$) and of an argument by Aizenman and Lieb \cite{AL}
(for $\gamma>0$). Note carefully that for small $a$ (Part
\eqref{it:smalla}) the inequality \eqref{LT-individual} holds in the
endpoint case, while it does \emph{not} for large $a$ (Part
\eqref{it:largea}). This is a phenomenon due to the Neumann boundary
conditions which is not present when Dirichlet boundary conditions are
imposed instead, see Theorem \ref{mainA0dirichlet}. 

\subsection{Outline of the paper}

This paper is organized as follows. In Section~\ref{sec:general} we
prove Theorem \ref{nonsym} and a weighted version of it about
arbitrary, not necessarily regular, metric trees. In Section
\ref{sec:regular} we show how our main results, Theorems
\ref{finite-rh}, \ref{th:mainTree} and \ref{homo}, follow from the
results about one-dimensional Schr\"odinger operators in Subsection
\ref{sec:oned}. 
In Section \ref{sec:clr} we give the proofs of Theorems \ref{clr} and
\ref{positive}. Section \ref{sec:gn} is of auxiliary character and
contains the proof of a family of Sobolev interpolation inequalities
which will be useful in the proof of Theorem \ref{mainA0}. Finally, in
Section \ref{sec:moments} we will use a duality argument and estimates
for Dirichlet eigenvalues in order to obtain the statements of Theorem
\ref{mainA0}. 


\section{Eigenvalue estimates on general metric trees}
\label{sec:general}

This section is devoted to the proof of Theorem
\ref{nonsym}. Moreover, we shall also prove the following 
weighted analog.

\begin{theorem}
  \label{nonsymweight}
  Let $a>0$ and $\gamma >(1+a)/2$. Then there exists a constant
  $C_a(\gamma)$ such that
  \begin{align} 
    \label{eq:nonsymweight}
    \tr (-\Delta_\Neumann - V)_-^\gamma \leq C_a(\gamma)\,
    \int_{\Gamma} V(x)_+^{\gamma + \frac {1+a}2} |x|^a \, dx.
  \end{align}
\end{theorem}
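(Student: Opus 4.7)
My plan is to deduce the weighted bound from Theorem~\ref{nonsym} by a dyadic decomposition of $\Gamma$ in the distance from the root, combined with a size-splitting of $V$. Let $A_0:=\{x\in\Gamma:|x|\le 1\}$ and $A_k:=\{x\in\Gamma:2^{k-1}<|x|\le 2^k\}$ for $k\ge 1$. Since imposing additional Neumann matching conditions at each circle $\{|x|=2^j\}$ only lowers the quadratic form of $-\Delta_\Neumann$, the variational principle gives
\[
\tr(-\Delta_\Neumann-V)_-^\gamma \le \sum_{k\ge 0}\tr(-\Delta^{A_k}_\Neumann-V)_-^\gamma,
\]
where $-\Delta^{A_k}_\Neumann$ is the Neumann Laplacian on the disjoint union of the finite-height rooted subtrees comprising $A_k$. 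Because the constant $L_\gamma$ in Theorem~\ref{nonsym} does not depend on the tree, applying it component-by-component yields
\[
\tr(-\Delta^{A_k}_\Neumann-V)_-^\gamma \le L_\gamma\int_{A_k}V_+^{\gamma+1/2}\,dx.
\]

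To pass to the weighted right-hand side, I would split $V=V_1+V_2$ with $V_1:=V\chi_{\{V\ge|x|^{-2}\}}$. Where $V\ge|x|^{-2}$ one has the elementary pointwise bound
\[
V_+^{\gamma+1/2}=V_+^{\gamma+(1+a)/2}V_+^{-a/2}\le V_+^{\gamma+(1+a)/2}|x|^a,
\]
which immediately produces the desired estimate for the $V_1$-contribution after summing in $k$. For $V_2$, the subcriticality $V_2<|x|^{-2}$ ensures that on each connected component $T$ of $A_k$ only the zero (constant) mode of $-\Delta^T_\Neumann$ can be pushed below zero, since the higher Neumann modes are separated from $0$ by a gap comparable to $(\operatorname{diam} T)^{-2}\sim 4^{-k}$; first-order perturbation theory from the constant eigenfunction then bounds the resulting negative eigenvalue by a multiple of the mean $\langle V_2\rangle_T$ of $V_2$ over $T$.

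The principal difficulty will be to control
\[
\sum_{k\ge 0}\sum_{T\subset A_k}\bigl|\langle V_2\rangle_T\bigr|^\gamma
\]
by $C\int_\Gamma V_+^{\gamma+(1+a)/2}|x|^a\,dx$ uniformly in the (possibly unbounded) number of components of each annulus. I expect this to follow from a Hölder inequality inside each $T$ (to introduce the weight $|x|^a$ from the sub-criticality bound $V_2<|x|^{-2}$) followed by a Hölder summation in $k$, in which the resulting geometric series converges precisely because of the strict hypothesis $\gamma>(1+a)/2$. This strict inequality is also exactly what is needed to make the final constant $C_a(\gamma)$ independent of the combinatorial structure of $\Gamma$.
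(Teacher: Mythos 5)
Your decomposition into dyadic annuli runs into a fundamental obstacle that the paper's argument is specifically designed to avoid: after Neumann bracketing at the circles $\{|x|=2^j\}$, each connected component $T$ of $A_k$ is a \emph{finite} (bounded-height) tree, and the Neumann Laplacian on such a piece has the constant function as a zero mode. Consequently Theorem~\ref{nonsym} cannot be applied component-by-component as you propose for $V_1$; the theorem is stated and proved only for trees of infinite height, and indeed for a finite tree $T$ the estimate $\tr(-\Delta^T_\Neumann - V)_-^\gamma \leq L_\gamma\int_T V_+^{\gamma+1/2}$ is false (take $V=\varepsilon$ constant on $T$: the left side is $\sim\varepsilon^\gamma$ while the right side is $\sim\varepsilon^{\gamma+1/2}|T|$, so the ratio blows up as $\varepsilon\to 0$). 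Your pointwise bound $V_+^{\gamma+1/2}\le V_+^{\gamma+(1+a)/2}|x|^a$ on $\{V\ge|x|^{-2}\}$ is correct but cannot rescue an inequality that already fails.

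The $V_2$ part has a matching difficulty. Even granting that only the zero mode is perturbed (which itself requires a gap bound that need not hold with a favorable constant once $V_2$ is only known to be $<|x|^{-2}\le 4\cdot4^{-k}$ on $A_k$), the resulting bound $\tr(-\Delta^T_\Neumann-V_2)_-^\gamma\lesssim\langle V_2\rangle_T^\gamma$ is \emph{not} controlled by $\int_T V_2^{\gamma+(1+a)/2}|x|^a\,dx$ uniformly in the geometry of $T$. If a component $T\subset A_k$ is a single edge of small length $\varepsilon$ and $V_2\equiv c_k 4^{-k}$ on $T$ with $c_k<1$, then the left side is $\sim(c_k4^{-k})^\gamma$ while the right side is $\sim c_k^{\gamma+(1+a)/2}4^{-k\gamma}2^{-k}\varepsilon$, and letting $\varepsilon\to 0$ destroys any hope of a uniform constant. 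The strict inequality $\gamma>(1+a)/2$ does not save this, because the problem is in the small-volume regime of $T$, not in the summation over $k$.

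The paper sidesteps all of this by a different decomposition: it writes $\Gamma=\bigcup_j\Gamma_j$ as a disjoint union of \emph{infinite} half-lines, obtained by imposing Neumann conditions at all but one outgoing edge at every vertex. Each piece is then an infinite half-line, so Proposition~\ref{ek} (the weighted half-line Lieb--Thirring bound with $\gamma>(1+a)/2$, due to Egorov--Kondratev and Weidl) applies directly, with no zero mode in sight. The weight $|x|^a$ emerges simply from $\dist(x,\partial\Gamma_j)\le|x|$. You would need a decomposition that preserves infinite height on every piece; bounded annuli do not do this, and neither the size-splitting of $V$ nor perturbation theory around the constant mode repairs it.
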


We emphasize that the constant in \eqref{eq:nonsymweight} can be
chosen independently of the tree. For the proofs of Theorems
\ref{nonsym} and \ref{nonsymweight} we use the following results about
half-line operators.

\begin{proposition}
  \label{ek}
  Let $\Gamma=\R_+$ and $a\geq0$. Let $\gamma>(1+a)/2$ if $a>0$ and
  $\gamma\geq 1/2$ if $a=0$. Then there exists a constant
  $L_{\gamma,a}^{EK}$ such that 
  \begin{equation}
    \label{eq:ek}
    \tr\left(-\Delta_\Neumann-V \right)_-^\gamma
    \leq
    L_{\gamma,a}^{EK} \int_0^\infty V(t)^{\gamma+\frac{1+a}2}_+ t^a\,dt
  \end{equation}
  for all $V$.
\end{proposition}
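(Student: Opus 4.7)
The proof splits naturally into two cases.

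For $a=0$ and $\gamma\geq 1/2$, I would use a reflection argument: extend $V$ to $V_e(t):=V(|t|)$ on $\R$. The one-dimensional Schr\"odinger operator $-d^2/dt^2 - V_e$ on $L^2(\R)$ commutes with the reflection $t\mapsto -t$ and hence preserves the decomposition of $L^2(\R)$ into even and odd functions. Under the natural unitary identification of the even subspace with $L^2(\R_+)$, the induced operator coincides with $-\Delta_\Neumann - V$, while the odd subspace yields a Dirichlet operator whose negative trace can only make the right-hand side larger. Therefore
\[
\tr(-\Delta_\Neumann - V)_-^\gamma \;\leq\; \tr\Bigl(-\tfrac{d^2}{dt^2} - V_e\Bigr)_-^\gamma,
\]
and the classical one-dimensional Lieb-Thirring inequality on $\R$ (valid for $\gamma\geq 1/2$) yields the result with constant $L^{EK}_{\gamma,0} = 2L_{\gamma,1}$.

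For $a>0$ and $\gamma>(1+a)/2$, the reflection argument alone is inadequate, because the unweighted Lieb-Thirring inequality on $\R$ does not imply a weighted one. My plan is instead to recognise this as the case $d=1$, $g\equiv 1$ of Theorem \ref{mainA0}(2) and to follow the two-step strategy developed in Sections \ref{sec:gn} and \ref{sec:moments}. First, I would establish a weighted Gagliardo-Nirenberg-Sobolev interpolation inequality for $f\in \H^1(\R_+)$, schematically of the form
\[
\left(\int_0^\infty |f(t)|^{2q}\, t^{-b}\,dt\right)^{1/q} \;\leq\; C\,\|f'\|_{L^2(\R_+)}^{2\theta}\,\|f\|_{L^2(\R_+)}^{2(1-\theta)},
\]
with parameters $q,b,\theta$ fixed by the scaling imposed by $\gamma$ and $a$. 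Second, I would apply the duality principle of Proposition \ref{duality}, together with the requisite Dirichlet eigenvalue estimates, to convert this Sobolev bound into the weighted Lieb-Thirring inequality.

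The chief obstacle is that the constant mode of $-\Delta_\Neumann$ carries zero energy, so no Hardy-type lower bound of the form $\int|f'|^2\,dt \geq c\int|f|^2 w\,dt$ can hold on the full Neumann form domain $\H^1(\R_+)$ for any positive weight $w$. In particular, a naive dyadic decomposition of $\R_+$ combined with the $a=0$ bound on each piece only reproduces the unweighted inequality and fails to generate the weight $t^a$ on the right-hand side. The strict condition $\gamma>(1+a)/2$ is precisely what the Sobolev-duality machinery needs in order to succeed: at the critical value $\gamma=(1+a)/2$ with $a>0$ the inequality already fails in a Dirac-potential limit (cf.\ Remark \ref{coupling}), so strict inequality cannot be relaxed.
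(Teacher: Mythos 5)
Your $a=0$ argument is exactly the paper's: extend $V$ evenly, dominate by the whole-line operator, and invoke the one-dimensional Lieb--Thirring inequality (with Weidl's endpoint $\gamma=1/2$ result). The constant bookkeeping is also consistent with the paper's \eqref{eq:eklt}.

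For $a>0$, however, you have misdiagnosed what the reflection trick can do. You dismiss it because ``the unweighted Lieb--Thirring inequality on $\R$ does not imply a weighted one''---true, but the paper does not use the unweighted inequality there. After extending $V$ evenly and estimating from above by $\tr(-d^2/dx^2 - W)_-^\gamma$, the paper cites the Egorov--Kondrat'ev reference \cite{EK} precisely for the \emph{weighted} whole-line Lieb--Thirring bound $\tr(-d^2/dx^2 - W)_-^\gamma \leq C\int_\R W_+^{\gamma+(1+a)/2}|x|^a\,dx$ (valid for $\gamma>(1+a)/2$, and scale-invariant since the exponent $p=\gamma+(1+a)/2$ is chosen exactly to balance the dilation $W\mapsto \mu^2 W(\mu\,\cdot)$). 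The reflection trick therefore carries all $a\geq 0$ in one stroke. Your alternative route---deriving the $a>0$ case as the $d=1$, $g\equiv 1$ instance of Theorem~\ref{mainA0}(2), via the weighted Sobolev interpolation inequalities of Section~\ref{sec:gn}, the duality of Proposition~\ref{duality}, the Dirichlet bound of Theorem~\ref{mainA0dirichlet}, and Dirichlet--Neumann interlacing---is logically sound and free of circularity (Theorem~\ref{mainA0} does not rely on Proposition~\ref{ek}), and it has the merit of being self-contained within the paper rather than deferring to \cite{EK}. But it deploys the entire general machinery on the simplest case, whereas the paper's two-line reduction is far lighter. Your closing remark about the necessity of strict inequality $\gamma>(1+a)/2$ is correct and agrees with the Dirac-potential obstruction noted in the paper.
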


To prove \eqref{eq:ek} we extend $V$ to an even function $W$ on $\R$.
Then the left hand side of \eqref{eq:ek} can be estimated from above
by the corresponding moments of the whole-line operator $-d^2/dx^2
-W$, and the claimed inequality for that operator follows from
\cite{EK} and \cite{W}. Using in addition the sharp constants from
\cite{HLT} and \cite{AL} one obtains for $a=0$ the following bounds on
the constants,
\begin{equation}
  \label{eq:eklt}
  L_{\gamma,0}^{EK} \leq 4\,  L_{\gamma,1}^{\cl}\
  \text{if}\ \gamma\geq \frac12,
  \qquad
  L_{\gamma,0}^{EK} \leq 2\,  L_{\gamma,1}^{\cl}\
  \text{if}\ \gamma\geq \frac32
\end{equation}
with $L_{\gamma,1}^\cl$ from \eqref{eq:lclass}. Note that the
inequality \eqref{eq:ek} with this constant for $\gamma=1/2$ and $a=0$
is sharp, and therefore so is \eqref{eq:nonsym} for $\gamma=1/2$. Now
we turn to the  

\begin{proof}[Proof of Theorems \ref{nonsym} and \ref{nonsymweight}]
  The idea is to impose Neumann boundary condition at all but one
  emanating edges of all vertices. This decreases the operator
  $-\Delta_\Neumann - V$. The resulting operator can be identified
  with a direct sum of half-line operators for which one can use
  Proposition~\ref{ek}.

  To be more precise, we decompose the graph $\Gamma=\bigcup_j
  \Gamma_j$ into a disjoint union of infinite halflines $\Gamma_j$.
  Then $L_2(\Gamma)=\bigoplus_j L_2(\Gamma_j)$ and $H^1(\Gamma)\subset
  \sum_j H^1(\Gamma_j)$. By the variational principle, this implies
  \begin{equation*}
    -\Delta_\Neumann - V 
    \geq \bigoplus_j \left( -\Delta_\Neumann^{\Gamma_j} - V_j \right),
  \end{equation*}
  where $-\Delta_\Neumann^{\Gamma_j}$ is the Neumann Laplacian on
  $\Gamma_j$ and $V_j$ is the restriction of $V$ to $\Gamma_j$. Hence
  Proposition \ref{ek} yields
  \begin{align*}
    \tr (-\Delta_\Neumann - V)_-^\gamma & \leq \sum_j
    \tr_{L_2(\Gamma_j)} \left( -\Delta_\Neumann^{\Gamma_j} - V_j \right)_-^\gamma \\
    & \leq L_{\gamma,\alpha}^{EK}\, \sum_j \int_{\Gamma_j}
    V_j(x)^{\gamma +\frac{1+a}2}_+ \dist(x,\partial\Gamma_j)^a \, dx \\
    & \leq L_{\gamma,\alpha}^{EK}\,\int_{\Gamma} V(x)_+^{\gamma +
      \frac {1+a}2} |x|^a\, dx,
  \end{align*}
  as claimed.
\end{proof}


\section{Eigenvalue estimates on regular trees}
\label{sec:regular}

In this section we show how our main results, Theorems
\ref{finite-rh}, \ref{th:mainTree} and \ref{homo}, can be deduced from
the results about one-dimensional Schr\"odinger operators in
Subsection \ref{sec:oned}. To do so, we exploit the symmetry of the
tree and the potential, which allows us to decompose $-\Delta_\Neumann
- V$ into a direct sum of half-line Schr\"odinger operators in
weighted $L_2$-spaces. We recall this construction next. 

\subsection{Orthogonal decomposition}
\label{sec:decomp}

In this subsection we recall the results of Carlson \cite{C} and of
Naimark and Solomyak \cite{NS2,NS1}. We need some notation. For each
$k \in \N$ we define the higher order branching functions $g_k:\R_+
\to \N_0$ by 
\begin{align*}
g_k(t) := \left\{
\begin{array}{l@{\quad}l}
 0, & t < t_k\, , \\
 1, & t_k \leq t <  t_{k+1}\, , \\
 b_{k+1}b_{k+2}\cdots b_n, & t_n \leq t< t_{n+1},\, k<n \, , 
\end{array}
\right.
\end{align*}
and introduce the weighted Sobolev space $\H^1_0((t_k,\infty), g_k)$
as the closure of $C_0^\infty(t_k,\infty)$ in the norm 
\begin{align*}
\left[ \int_{t_k}^\infty \left( |f'(t)|^2 + |f(t)|^2 \right) g_k(t)
  \,dt \right]^{\frac 12}.
\end{align*}
Let $\A_k$ be the self-adjoint operator  in $L_2((t_k,\infty),\, g_k)$
given by the quadratic form 
\begin{align*} 
\qf_k[f] := \int_{t_k}^{\infty} |f'(t)|^2 g_k(t) \, dt
\end{align*}
with form domain $\H^1_0((t_k,\infty), g_k)$.
Notice that the operators $\A_k$ with $k \geq 1$ satisfy
\emph{Dirichlet} boundary condition at $t_k$, while the operator
$\A_0$ satisfies \emph{Neumann} boundary condition at $t_0 = 0$. 

\noindent The following statement is taken from 
\cite{NS1} and \cite{S}.

\begin{proposition} \label{NStheorem}
Let $V\in L_\infty(\Gamma)$ be symmetric. Then $-\Delta_\Neumann -
V$ is unitarily equivalent to the orthogonal sum of operators 
  \begin{equation} \label{decomp}
    -\Delta_\Neumann - V \simeq (\A_0 - V) \oplus
    \sum_{k=1}^{\infty}\oplus \big(\A_k - V_k\big)^{[b_1...b_{k-1}(b_k-1)]}.
  \end{equation}
  Here the symbol $[b_1...b_{k-1}(b_k-1)]$ means that the operator $\A_k
  - V_k$ appears $b_1...b_{k-1}(b_k-1)$ times in the orthogonal sum, and
  $V_k$ denotes the restriction of $V$ to the interval $(t_k,\infty)$.
\end{proposition}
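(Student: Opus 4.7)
The plan is to construct an explicit unitary operator implementing the decomposition, exploiting the rotational symmetry of $\Gamma$ (permutations of sub-trees emanating from any given vertex) and the fact that $V$ depends only on $|x|$ so that multiplication by $V$ commutes with these symmetries. I will identify the orthogonal summands as ``radial'' and ``antisymmetric-at-level-$k$'' subspaces, verify that each is invariant under the quadratic form of $-\Delta_\Neumann - V$, and match the restricted form with $\qf_k$ plus multiplication by $V_k$.

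First I would introduce the radial subspace $\mathcal{K}_0 \subset L_2(\Gamma)$ of functions of the form $\phi(x) = f(|x|)$; the map $U_0 f := f(|\cdot|)$ is an isometry from $L_2(\R_+, g_0)$ onto $\mathcal{K}_0$ since at each $t$ there are exactly $g_0(t)$ points of $\Gamma$ at distance $t$. Next, for each $k \geq 1$ and each vertex $v$ at distance $t_k$, I pick an orthonormal basis $\{a_v^{(j)}\}_{j=1}^{b_k-1}$ of the hyperplane $\{a \in \C^{b_k} : \sum_i a_i = 0\}$, and for $f \in L_2((t_k,\infty), g_k)$ define
\[
(U_{v,j}f)(x) = \begin{cases} (a_v^{(j)})_i\, f(|x|), & x \in \Gamma_i(v),\\ 0, & x \notin \bigcup_i \Gamma_i(v), \end{cases}
\]
where $\Gamma_i(v)$ is the subtree starting along the $i$-th edge emanating from $v$. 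A direct computation, using that $g_k(t)$ by definition counts the points at distance $t$ in a single $\Gamma_i(v)$ and that $\sum_i |(a_v^{(j)})_i|^2 = 1$, shows that $U_{v,j}$ is an isometry.

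Next I would check that each $\mathcal{K}_{v,j} := U_{v,j}\bigl(H^1_0((t_k,\infty), g_k)\bigr)$ lies in $H^1(\Gamma)$ and that the relevant matching conditions translate precisely to the structure of $\A_k$. Continuity at $v$ requires $(a_v^{(j)})_i f(t_k) = 0$ for all $i$, which is exactly the Dirichlet condition $f(t_k)=0$ built into $\mathcal{D}(\qf_k)$; the Kirchhoff balance at $v$ is automatic because $\sum_i (a_v^{(j)})_i = 0$; at any interior vertex $w$ at level $m > k$ inside $\Gamma_i(v)$, the jump $g_k(t_m^+) = b_m g_k(t_m^-)$ together with the self-adjoint extension defined by $\qf_k$ enforces $f'(t_m^-) = b_m f'(t_m^+)$, which is the Kirchhoff condition in disguise. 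For $\mathcal{K}_0$, the absence of a jump at $t_0$ corresponds to the Neumann condition at the root. Computing the form on each subspace then gives $\int_\Gamma |(U_{v,j}f)'|^2\, dx = \qf_k[f]$ (respectively $\qf_0[f]$), while the symmetry of $V$ makes multiplication by $V$ diagonal in the decomposition and yields $V_k$ on $\mathcal{K}_{v,j}$. Counting $b_1 \cdots b_{k-1}$ vertices $v$ at level $k$ and $b_k - 1$ basis vectors per $v$ produces the stated multiplicity.

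The one genuine issue, and the step I expect to be the main obstacle, is the completeness of the decomposition: one must show that $\mathcal{K}_0 \oplus \bigoplus_{k,v,j} \mathcal{K}_{v,j}$ exhausts $L_2(\Gamma)$. I would do this by a level-by-level induction (or equivalently by a dimension count on each ``sphere'' $\{|x| = t\}$): orthogonality of $\phi \in L_2(\Gamma)$ to $\mathcal{K}_0$ forces the mean of $\phi$ over each sphere to vanish, and successive orthogonality to $\mathcal{K}_{v,j}$ at levels $1, 2, \ldots$ forces the mean over every sub-sphere of every sub-tree to vanish, which (since symmetric differences of such means separate points of the sphere) implies $\phi = 0$. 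With completeness in hand, the form identity on each summand, combined with the symmetry of $V$, yields \eqref{decomp}.
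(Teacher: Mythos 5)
The paper does not prove this proposition; it is quoted with a citation to Naimark--Solomyak \cite{NS1} and Solomyak \cite{S}, so there is no in-paper proof to compare against. Your construction is the standard one from those references: a radial subspace $\mathcal{K}_0$ carried by $U_0$, and for each level $k\ge1$ and each vertex $v$ at distance $t_k$, isometries $U_{v,j}$ built from an orthonormal basis of the mean-zero hyperplane in $\C^{b_k}$ tensored with radial functions on the $b_k$ subtrees emanating from $v$. The norm computation using $g_k$, the vertex-matching checks, the multiplicity count $b_1\cdots b_{k-1}(b_k-1)$, and the completeness argument (vanishing sphere sums together with equality of sibling sub-sphere sums at every level force $\phi\equiv0$) are all correct.

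Two points deserve tightening. First, the condition $f'(t_m^-)=b_m f'(t_m^+)$ at interior vertices is a feature of the \emph{operator} domain of $\A_k$, not of its \emph{form} domain $H^1_0((t_k,\infty),g_k)$; for the form-level identification all you need is that $U_{v,j}$ maps $H^1_0((t_k,\infty),g_k)$ into $H^1(\Gamma)$, which only requires continuity of $f$, so invoking the derivative jump here is a distraction. Second, and this is the one genuine step you pass over: an $L_2$-orthogonal decomposition $L_2(\Gamma)=\bigoplus_j\mathcal K_j$ in which the form is block-diagonal on each $H^1(\Gamma)\cap\mathcal K_j$ does not by itself give the operator decomposition; you also need that the form domain itself decomposes, i.e.\ $H^1(\Gamma)=\bigoplus_j\bigl(\mathcal K_j\cap H^1(\Gamma)\bigr)$. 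This is true because each $L_2$-projection onto $\mathcal K_j$ can be realised by averaging over a group of tree automorphisms fixing the root, which leaves both $H^1(\Gamma)$ and the form \eqref{eq:kinetic} invariant -- but that observation, which is exactly where the regularity of $\Gamma$ and the symmetry of $V$ enter at the form level, should be stated rather than left implicit.
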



\subsection{Proof of Theorems \ref{finite-rh} and \ref{th:mainTree}}

Let us compare the operators $\A_k$ with each other.
From the definition of the function $g_k$ it follows that
\begin{align*}
\frac{\int_{t_k}^{\infty}\left(|f'|^2 - V_k |f|^2 \right)
g_k \, dt} {\int_{t_k}^{\infty} |f|^2 g_k \, dt} 
=
\frac{\int_{t_k}^{\infty}\left(|f'|^2 - V_k |f|^2 \right) 
g_0 \, dt} {\int_{t_k}^{\infty} |f|^2 g_0 \, dt}.
\end{align*}
Since every function $f \in \H^1_0((t_k,\infty),g_k)$ can be extended
by zero to a function in $\H^1(\R_+,g_0)$, the variational
principle shows that
\begin{align} \label{majortrace}
\tr (\A_k - V_k)_-^\gamma \leq \tr (\A_0 - \chi_{(t_k,\infty)}V)_-^\gamma
\end{align}
for any $k \in \N$ and $\gamma \geq 0$.

Assuming the validity of Theorems \ref{clr} and \ref{mainA0} we now give the 

\begin{proof}[Proof of Theorems \ref{finite-rh} and \ref{th:mainTree}]
  In the case of Theorem \ref{finite-rh} put $\gamma=0$ and let $q$
  and $w$ be such that \eqref{eq:clrass} holds. Moreover, put
  $p=q/(q-2)$. In the case of Theorem \ref{th:mainTree} let $\gamma$
  be as indicated there and put $p=\gamma+(1+a)/2$ and
  $w(t):=g_0(t)^{a/(d-1)}$. It follows from Theorems \ref{clr} and
  \ref{mainA0}, respectively, that in both cases there exists a
  constant $C$ such that 
  \begin{equation*}
    \tr (\A_0 - V)_-^\gamma
    \leq C \int_0^{\infty}\, V(t)_+^{p} w(t) \, dt  
  \end{equation*}
  for all $V$. Combining this with the orthogonal decomposition
  \eqref{decomp} and inequality \eqref{majortrace} we obtain 
  \begin{align*}
    \tr(-\Delta_\Neumann - V)_-^\gamma 
    =& \, \tr (\A_0 - V)_-^\gamma + \sum_{k=1}^\infty b_1\cdots
    b_{k-1}(b_k-1) \tr (\A_k - \chi_{(t_k,\infty)} V)_-^\gamma \\ 
    \leq& \, C \int_0^{\infty}\, V(t)_+^p w(t) \, dt \\
    & + C \sum_{k=1}^\infty \left(b_1\cdots b_{k-1}(b_k-1)
      \int_{t_k}^\infty V(t)_+^p w(t) \, dt
    \right) \\
    = & \, C \sum_{k=0}^{\infty} \int_{t_k}^{t_{k+1}} (b_0\cdots b_k)
    V(t)_+^p w(t) \, dt  \\
    = & \, C\, \int_\Gamma V(|x|)_+^p\,w(|x|) \, dx,
  \end{align*}
  as claimed.
\end{proof}


\subsection{Proof of Theorem \ref{homo}}\label{sec:homo}

In this subsection we assume that $g_0$ is the first branching
function of a homogeneous metric tree with edge length $1$ and
branching number $b>1$. Denote by $\lambda_b$ the bottom of its
essential spectrum and by $\omega$ the function on $\R_+$ satisfying
in distributional sense 
\begin{equation*}
-(g_0 \omega')' = \lambda_b g_0\, \omega\, ,
\end{equation*}
\begin{equation*}
\omega'(0)=0, \quad \omega(j+)= \omega(j-),\quad
\omega'(j-)=b\omega'(j+), \quad j\in\N\, . 
\end{equation*}
In the proof of Theorem \ref{homo} we need the following technical result.

\begin{lemma}
  \label{efgrowth}
  There exist constants $0<C_1<C_2<\infty$ such that
  \begin{equation}
    \label{eq:efgrowth}
    C_1\, \frac{1+t}{\sqrt{g_0(t)}} \leq \omega(t)
    \leq C_2\, \frac{1+t}{\sqrt{g_0(t)}}\, , 
    \quad t\geq 0\, .
  \end{equation}
\end{lemma}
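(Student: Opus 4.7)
The plan is to exploit the fact that $g_0$ is piecewise constant: on each interval $(j,j+1)$ one has $g_0\equiv b^j$, so the equation $-(g_0\omega')'=\lambda_b g_0\omega$ reduces to $-\omega''=\lambda_b\omega$, whose solutions are $\omega(t)=A_j\cos(k(t-j))+B_j\sin(k(t-j))$ with $k:=\sqrt{\lambda_b}=\arccos(1/R_b)\in(0,\pi/2)$. The continuity condition $\omega(j-)=\omega(j+)$ and the jump condition $\omega'(j-)=b\,\omega'(j+)$ translate into a transfer matrix relation $(A_j,B_j)^T=M\,(A_{j-1},B_{j-1})^T$, where
$$M:=\begin{pmatrix} \cos k & \sin k \\ -b^{-1}\sin k & b^{-1}\cos k \end{pmatrix}.$$
The Neumann condition $\omega'(0)=0$ forces $(A_0,B_0)=(A_0,0)$, which I will normalize to $A_0>0$.

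First I would analyze the spectral structure of $M$. A short calculation using $\cos k=1/R_b=2b^{1/2}/(b+1)$ gives $\mathrm{tr}(M)=2b^{-1/2}$ and $\det(M)=b^{-1}$, so the characteristic polynomial is $(\lambda-b^{-1/2})^2$. I would then verify that $N:=M-b^{-1/2}I$ satisfies $N^2=0$: the off-diagonal entries vanish by inspection, and the diagonal entries reduce to $\alpha^2-b^{-1}\sin^2 k$ with $\alpha:=(b-1)/(b^{1/2}(b+1))>0$, which equals zero since $\sin^2 k=1-\cos^2 k=(b-1)^2/(b+1)^2$. Hence $M$ is a genuine Jordan block and a binomial expansion yields the clean formula
$$M^j=b^{-j/2}I+j\,b^{(1-j)/2}\,N.$$

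Applying this to $(A_0,0)^T$ gives explicit expressions
$$A_j=A_0\,b^{-j/2}\bigl(1+jb^{1/2}\alpha\bigr),\qquad B_j=-A_0\,j\sin k\cdot b^{-(j+1)/2},$$
so $A_j$ and $|B_j|$ are comparable to $(1+j)b^{-j/2}$ uniformly in $j$. On $(j,j+1)$, the representation $|\omega(t)|\leq |A_j|+|B_j|$ gives the upper bound $\omega(t)\leq C_2(1+t)/\sqrt{g_0(t)}$ after absorbing boundedness of $\cos$ and $\sin$ into the constant, since $g_0(t)=b^j$ on the same interval.

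The main obstacle is the lower bound: I must ensure $\omega$ stays bounded away from $0$ relative to $(1+t)/\sqrt{g_0(t)}$ on the whole of each interval $(j,j+1)$, not only at the endpoints. To handle this, I would rewrite $\omega(t)=R_j\sin(k(t-j)+\phi_j)$ with $R_j=\sqrt{A_j^2+B_j^2}$, so $R_j\gtrsim (1+j)b^{-j/2}$. Because the period $2\pi/k>4$ exceeds the interval length, two zeros of $\omega$ in $(j,j+1)$ are impossible; since $\omega(j)=A_j>0$ and $\omega(j+1)=A_{j+1}>0$, the function cannot change sign in $(j,j+1)$, and a uniform estimate of the minimum of $|\sin(k(t-j)+\phi_j)|$ over $t\in[j,j+1]$ (depending only on $b$) then yields $\omega(t)\geq C_1(1+t)/\sqrt{g_0(t)}$. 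Together with the upper bound this gives \eqref{eq:efgrowth} with constants depending only on $b$.
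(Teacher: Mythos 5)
Your proof is correct and follows the same transfer-matrix strategy as the paper: both compute the $j$-th power of the one-step interface matrix, whose double eigenvalue $b^{-1/2}$ with a nontrivial Jordan block produces the $(1+j)\,b^{-j/2}$ growth of the coefficients. The only differences are cosmetic --- you use the basis $\{\cos(k(t-j)),\sin(k(t-j))\}$ and finish via the phase-amplitude form together with a no-interior-zeros argument (the interval length $1$ is below a half-period $\pi/k$, and $A_j>0$ at both endpoints), whereas the paper uses the basis $\{\cos(\mu(t-j)),\cos(\mu(j+1-t))\}$ and bounds the resulting periodic envelope $\varphi$ explicitly.
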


Assuming this for the moment we give the

\begin{proof}[Proof of Theorem \ref{homo}]
  Proceeding in the same way as in the proof of Theorems
  \ref{finite-rh} and \ref{th:mainTree} one sees that it suffices to
  prove that 
  \begin{equation}
    \label{eq:homoproof}
    N(\A_0 -V -\lambda_b) \leq C \int_0^{\infty}\, V(t)_+^{p} w(t) \, dt\, .
  \end{equation}
  We shall deduce this from Theorem \ref{positive} with $g=g_0$. By
  the explicit form of $g_0$ we see that \eqref{eq:infinitelength} and
  \eqref{eq:posdef1d} are satisfied. Moreover,
  $\lambda_b=\lambda(\A_0)$ and $\omega$ is the generalized ground
  state of $\A_0$ in the sense of \eqref{eq:gs}. It follows from Lemma
  \ref{efgrowth} that the assumption \eqref{eq:growth1d} is satisfied
  and that one has 
  \begin{align*}
    & \left(\int_0^t \omega^q g_0^{\frac q2} w^{-\frac {q-2}2}
    \,ds\right)^{2/q} \int_t^\infty \omega^{-2} g^{-1}\,ds\\
    & \qquad \leq \left(\frac{C_2}{C_1}\right)^2 
    \left(\int_0^t (1+s)^q w^{-\frac{q-2}2} \,ds\right)^{2/q} \frac1{1+t}.
  \end{align*}
  Hence \eqref{eq:homoproof} follows from Theorem \ref{positive}.
\end{proof}

We are left with the

\begin{proof}[Proof of Lemma \ref{efgrowth}]
  A direct calculation shows that
  \begin{equation*}
    \omega(t)= \alpha_j\, \cos(\mu(t-j))+\beta_j\,
    \cos(\mu(j+1-t)),\quad j < t < j+1,
  \end{equation*}
  with $\mu := \sqrt{\lambda_b}$, $\alpha_0:=1,\, \beta_0:=0$ and
  \begin{align*}
    \alpha_{j-1} \cos\mu + \beta_{j-1} = \alpha_j+\beta_j\cos\mu\, , 
    \quad -\alpha_{j-1} = b\, \beta_j \, .
  \end{align*}
  This can be rewritten as
  \begin{equation*}
    \left(\begin{array}{c}
        \alpha_j \\
        \beta_j
      \end{array} \right)     b^{-\frac 12}\, \left(\begin{array}{cc}
        2 &  b^{\frac 12}\\
        -b^{- \frac 12} & 0
      \end{array} \right)
    \left(\begin{array}{c}
        \alpha_{j-1} \\
        \beta_{j-1}
      \end{array} \right)\, ,
  \end{equation*}
  and by induction one easily finds that
  \begin{equation*}
    \left(\begin{array}{c}
        \alpha_j \\
        \beta_j
      \end{array} \right)     b^{- \frac j2}\, \left(\begin{array}{cc}
        j+1 & j\, b^{\frac 12} \\
        -j\, b^{-\frac 12} & -j+1
      \end{array} \right)\,
    \left(\begin{array}{c}
        \alpha_0 \\
        \beta_0
      \end{array} \right)\, .
  \end{equation*}
  This implies
  \begin{equation*}
    \omega(t) = g_0(t)^{-\frac 12} (j+1) 
    \left(\cos(\mu(t-j))- \frac{j}{j+1} b^{-\frac 12}
    \cos(\mu(j+1-t)) \right)
  \end{equation*}
  if $j<t<j+1$, and hence
  \begin{equation} \label{asymp}
    \omega(t) \, \sim \, g_0(t)^{-\frac 12} \, (1+t)\, \varphi(t),
    \quad t\to \infty,
  \end{equation}
  where $\varphi$ is periodic with period $1$ and
  $$
  \varphi(t)= \cos \mu t - b^{-\frac 12}\cos(\mu(1-t)), \quad 0<t<1.
  $$
  The estimates
  $$
  \frac{b^{\frac 12}-b^{-\frac 12}}{b^{\frac 12}+b^{-\frac 12}} \,
  \geq \, \varphi(t)\, 
  \geq\,  b^{-\frac 12}\, \frac{b^{\frac 12} - b^{-\frac 12}}{b^{\frac
  12}+b^{-\frac 12}}
  >0,\quad 0<t<1\, ,
  $$
  and the asymptotics \eqref{asymp} imply that \eqref{eq:efgrowth}
  holds for all sufficiently large $t$. On the other hand, by the
  Sturm oscillation theorem (or by direct calculation) $\omega$ is
  bounded and bounded away from zero on compacts. This proves the
  lemma. 
\end{proof}

\section{Estimates on the number of eigenvalues}
\label{sec:clr}


\subsection{Proof of Theorem \ref{clr}}

Our goal in this section is to prove the statements of Theorem
\ref{clr}. An important ingredient will be weighted Hardy-Sobolev
inequalities. The characterization of all admissible weights is
independently due to Bradley, Maz'ya and Kokilashvili. The constant in
\eqref{eq:mbest} below is due to Opic. We refer to \cite[Thm. 6.2]{OK}
for the proof and further historical remarks.

\begin{proposition}
  \label{mazya}
  Let $2\leq q\leq\infty$. The inequality
  \begin{equation}\label{eq:m}
    \left(\int_0^\infty |w(r)u(r)|^q \,dr \right)^{2/q}
    \leq S^2 \int_0^\infty |v(r)u'(r)|^2 \,dr
  \end{equation}
  holds for all absolutely continuous functions $u$ on $[0,\infty)$
  with $\lim_{r\to\infty}u(r)=0$ if and only if
  \begin{equation}\label{eq:mcrit}
    T:= \sup_{r>0} \left(\int_0^r |w(s)|^q \, ds\right)^{1/q}
    \left(\int_r^\infty |v(s)|^{-2}\,ds\right)^{1/2} <\infty.
  \end{equation}
  In this case, the sharp constant $S$ in \eqref{eq:m} satisfies
  \begin{equation}\label{eq:mbest}
    T \leq S 
    \leq \left(1+\frac q2\right)^{1/q} \left(1+\frac 2q\right)^{1/2} T.
  \end{equation}
\end{proposition}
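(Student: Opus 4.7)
The plan is to prove the equivalence together with the two-sided bound $T\leq S\leq (1+q/2)^{1/q}(1+2/q)^{1/2}\,T$ by the standard scheme from Hardy-type inequality theory: \emph{necessity} by a test-function argument that produces the lower bound $T\leq S$, and \emph{sufficiency} by combining a pointwise Cauchy--Schwarz bound with an integration by parts to produce the upper bound on $S$.

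\textbf{Necessity.} For each fixed $r>0$ I would substitute into \eqref{eq:m} the test function
$$u_r(t):=\int_{\max(t,r)}^\infty v(s)^{-2}\,ds,$$
which is absolutely continuous on $[0,\infty)$, tends to $0$ at infinity, is constantly equal to $\int_r^\infty v^{-2}$ on $[0,r]$, and has derivative $u_r'(t)=-v(t)^{-2}\chi_{(r,\infty)}(t)$ a.e. Bounding the LHS below by the contribution from $(0,r)$ and computing the RHS explicitly yields
$$\Bigl(\int_0^r|w|^q\Bigr)^{2/q}\Bigl(\int_r^\infty v^{-2}\Bigr)^2 \leq S^2 \int_r^\infty v^{-2}.$$
Dividing by $\int_r^\infty v^{-2}$ and taking the supremum in $r>0$ gives $T\leq S$ (and forces $T<\infty$ whenever $S<\infty$).

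\textbf{Sufficiency.} Set $f:=vu'$ and $W(r):=\int_0^r|w|^q$. Using $u(\infty)=0$ and Cauchy--Schwarz on $u(r)=-\int_r^\infty v(s)^{-1}f(s)\,ds$ gives the pointwise bound
$$|u(r)|^2 \leq \Bigl(\int_r^\infty v^{-2}\Bigr)\Bigl(\int_r^\infty f^2\Bigr),$$
which combined with the hypothesis yields $W(r)^{2/q}|u(r)|^2 \leq T^2 \int_r^\infty f^2$. I would then Stieltjes-integrate by parts, obtaining
$$\int_0^\infty |wu|^q\,dr = -q\int_0^\infty W(r)\,|u|^{q-2}u(r)\,v(r)^{-1} f(r)\,dr,$$
the boundary terms vanishing by $W(0)=0$ together with the pointwise bound (which forces $W|u|^q\to 0$ at infinity when $T<\infty$). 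Applying H\"older in the remaining integral, substituting the pointwise bound on $|u|^{q-2}$, and using $W(r)^{2/q}\int_r^\infty v^{-2}\leq T^2$ to control the resulting weight reduces everything to weighted $L^2$-norms of $f$, producing an estimate of the form $S\leq C(q)\,T$.

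\textbf{Main obstacle.} The qualitative characterization $T<\infty\Leftrightarrow S<\infty$ emerges from the above scheme without difficulty, but extracting Opic's \emph{sharp} numerical constant $(1+q/2)^{1/q}(1+2/q)^{1/2}$ is the genuinely delicate point: a direct H\"older application produces a strictly larger factor. The remedy is to refine the integration by parts by replacing $W(r)$ with an optimally chosen power $W(r)^\theta$ for $\theta\in(0,1)$ depending on $q$ and then optimizing over $\theta$; equivalently, after the change of variable $\rho=W(r)$ the problem reduces to a scalar weighted Hardy inequality on $\R_+$ whose extremal constant is known from a classical Bliss-type calculation. Making both constants in \eqref{eq:mbest} match exactly --- rather than merely proving equivalence up to a universal factor --- is the main work.
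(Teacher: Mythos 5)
The paper itself does not prove Proposition \ref{mazya}: it is quoted from the literature, with the characterization of admissible weights attributed to Bradley, Maz'ya and Kokilashvili and the constant in \eqref{eq:mbest} attributed to Opic, and the reader is referred to \cite[Thm.~6.2]{OK}. So there is no in-paper proof to compare against; the relevant question is whether your blind attempt is sound.

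Your necessity argument is correct and complete: the test function $u_r(t)=\int_{\max(t,r)}^\infty v^{-2}\,ds$ (with a truncation at a finite endpoint $R$ if $\int_r^\infty v^{-2}=\infty$) yields $T\leq S$ by exactly the calculation you indicate, and this is the standard proof of the lower bound in \eqref{eq:mbest}.

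The sufficiency sketch, however, has a genuine gap that is not merely about sharpness of the constant. Set $h(r):=\int_r^\infty v^{-2}\,ds$ and $F(r):=\int_r^\infty f^2\,ds$ with $f=vu'$. After your integration by parts you have $\int_0^\infty |wu|^q\,dr \leq q\int_0^\infty W\,|u|^{q-1}\,v^{-1}|f|\,dr$, and applying the pointwise bound $|u|\leq (hF)^{1/2}$, the hypothesis $W h^{q/2}\leq T^q$, and Cauchy--Schwarz in the remaining integral one is left with
\begin{equation*}
\int_0^\infty |wu|^q\,dr \;\leq\; q\,T^{q}\left(\int_0^\infty h^{-1}v^{-2}\,F^{q-1}\,dr\right)^{1/2}\|f\|_2 .
\end{equation*}
By Fubini, $\int_0^\infty h^{-1}v^{-2}\,F\,dr = \int_0^\infty f(s)^2\,\log\bigl(h(0)/h(s)\bigr)\,ds$, so this integral is \emph{not} bounded by a power of $\|f\|_2$: the logarithmic factor is unbounded (take $v\equiv e^{r/2}$, say). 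Thus the argument as outlined does not close even for the qualitative implication $T<\infty\Rightarrow S<\infty$. The replacement of $W$ by $W^\theta$ that you propose does not remove this obstruction — the same divergent weight $h^{-1}v^{-2}$ survives. What is missing is a more refined device: either a Muckenhoupt/Maz'ya type block decomposition adapted to the level sets of $h$ (which gives $S\lesssim T$ up to a universal constant), or, to recover Opic's sharp factor $(1+q/2)^{1/q}(1+2/q)^{1/2}$, a weighted Cauchy--Schwarz of the form $|u(r)|^2\leq (1-\varepsilon)^{-1} h(r)^{1-\varepsilon}\int_r^\infty f^2 h^\varepsilon$ followed by a careful integration by parts and optimization over $\varepsilon$. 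Neither of these is the straightforward H\"older application your proposal describes.
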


If $q=\infty$, then \eqref{eq:mcrit} means
\begin{equation*}
  T:= \sup_{r>0} \left(\sup_{0\leq s\leq r} |w(s)| \right)
  \left(\int_r^\infty |v(s)|^{-2}\,ds\right)^{1/2} <\infty,
\end{equation*}
and in \eqref{eq:mbest} one has $T=S$. Now everything is in place to give the

\begin{proof}[Proof of Theorem \ref{clr}]
  Let $w\geq 0$ such that $M$ defined in \eqref{eq:clrass1d} is
  finite. Then Proposition \ref{mazya} yields for all $u\in
  H^1(\R_+,g)$,
  \begin{equation}
    \label{aux} 
    \left(\int_0^\infty |u|^q g^{\frac q2} w^{-\frac{q-2}2} \,dt\right)^{2/q} 
    \leq S^2 \int_0^\infty |u'|^2 g\,dt,
  \end{equation}
  where
  \begin{equation*}
    M \leq S^2
    \leq \left(1+\frac q2\right)^{2/q} \left(1+\frac 2q\right) M.
  \end{equation*}
  We now use an argument in the spirit of \cite{GGMT} to deduce
  \eqref{eq:clr1d} from \eqref{aux}. Let $\omega$ be the solution of
  $-(g\omega')'-V\omega g=0$ that satisfies the boundary condition
  $\omega'(0)=0$. By Sturm-Liouville theory (see, e.g.,
  \cite[Thm. 14.2]{Wm}) the number of zeros of $\omega$
  coincides with the number $N$ of negative eigenvalues of $A_g -
  V$. Denote these 
  zeros by $0<a_1<a_2<\ldots<a_N<\infty$ and apply \eqref{aux} to $u
  \omega \chi_{(a_j,a_{j+1})}$. Integrating by parts and using 
  H\"older's inequality (noting that $1/p+2/q=1$) we obtain
  \begin{align*}
    & \left(\int_{a_j}^{a_{j+1}} |\omega|^q g^{\frac q2}
      w^{-\frac{q-2}2} \, dt\right)^{2/q} \leq S^2
    \int_{a_j}^{a_{j+1}} |\omega'|^2 g \, dt
    = S^2 \int_{a_j}^{a_{j+1}} V |\omega|^2 g \, dt \\
    &\qquad \leq S^2 \left(\int_{a_j}^{a_{j+1}} V^pw\, dt\right)^{1/p}
    \left(\int_{a_j}^{a_{j+1}} |\omega|^q\, g^{\frac q2}
      w^{-\frac{q-2}{2}}\, dt \right)^{2/q}\, .
  \end{align*}
  This implies that
  $$
  1 \leq S^{2p}\, \int_{a_j}^{a_{j+1}} V^pw\, dt\, , \quad \forall\,
  j=1,\dots N\, .
  $$
  Summing this inequality over all intervals $(a_j,a_{j+1})$ we obtain
  \begin{equation*}
    N(A_g - V) \leq S^{2p} \int_0^\infty V^p_+ w\,dt.
  \end{equation*}
  This proves \eqref{eq:clr1d} and shows that the sharp constant
  satisfies $C(w)\leq S^{2p}$. The lower bound $C(w)\geq S^{2p}$
  follows from Theorem \ref{lowest} below. This implies also that
  \eqref{eq:clr1d} does not hold if $M=\infty$ and 
  completes the proof.
\end{proof}

For later reference we include

\begin{example}
  Assume that $g$ satisfies \eqref{eq:gpower}
  for some $d>2$. Then for any $1\leq a<\infty$
  \begin{equation*}
    N(A_g-V) \leq C_a \int_0^\infty V^{\frac {1+a}2}_+ (1+t)^a \,dt
  \end{equation*}
  where
  \begin{equation*}
    \left(\frac{c_1}{c_2}\right)^{\frac{1+a}2} M_a^{\frac{1+a}2}
    \leq C_a
    \leq \frac{(2a)^a}{(a+1)^{\frac{a+1}2}(a-1)^{\frac{a-1}2}}
    \left(\frac{c_2}{c_1}\right)^{\frac{1+a}2} M_a^{\frac{1+a}2}\, . 
  \end{equation*}
  and
  \begin{align*} 
    M_a & := \sup_{t>0} \left(\int_0^t
      (1+s)^{\frac{(d-1)(a+1)-2a}{a-1}} \,ds\right)^{\frac{a-1}{a+1}}
    \int_t^\infty (1+s)^{-d+1}\,ds \\
    & = \left(\frac{a-1}{a+1}\right)^{\frac{a-1}{a+1}}
    (d-2)^{-\frac{2a}{a+1}}.
  \end{align*}
  (For $a=1$ one has $(c_1/c_2) M_1 \leq C_1 \leq (c_2/c_1) M_1$ and
  $M_1:=(d-2)^{-1}$.) This follows by choosing $w(t)=(1+t)^a$ and
  $q=2(a+1)/(a-1)$ after elementary calculations. 
\end{example}

It is also illustrative to include another proof of estimate
\eqref{ex:sharp} in Example~\ref{ex:clrsharp}: The Birman-Schwinger
principle implies
\begin{equation} 
  \label{bs} 
  N(A_g-V) \, \leq \, \tr_{L^2(\R_+,g dt)}\,
  \left(V^{\frac 12}_+\, A_g^{-1}\, V^{\frac 12}_+\right).
\end{equation} 
Since the operator $V^{\frac 12}_+\, A_g^{-1}\, V^{\frac 12}_+$ is
non-negative, we have
\begin{equation} 
  \label{tr}
  \tr_{L^2(\R_+,g dt)}\, \left(V^{\frac 12}_+\, A^{-1}\,  V^{\frac
      12}_+\right) = \int_0^\infty\, G(t,t)\, V(t)_+ \, g(t)\, dt,
\end{equation}
where $G(t,t)$ is the diagonal of the Green function of the operator
$A$. It follows from Sturm-Liouville theory (see, e.g.,
\cite[Thm. 7.8]{Wm}) that 
\begin{equation*}
  G(t,t) = \frac{u_1(t)\, u_2(t)}{g(t) W(t)}\, ,  
\end{equation*} 
where $u_1,u_2$ are two linearly independent solutiuons of $-(gu')'=0$
and $W=u_1'u_2 -u_1 u_2'$ is their Wronskian. A direct calculation
gives 
$$
u_1(t) = 1,\quad u_2(t)= \int_t^\infty\, \frac{ds}{g(s)} \, , \quad W(t)
=\frac{1}{g(t)}\, .
$$
In view of \eqref{bs} and \eqref{tr} this yields estimate
\eqref{ex:sharp}.


\subsection{Proof of Theorem \ref{positive}}

In this subsection we are working under the assumptions
\eqref{eq:infinitelength}, \eqref{eq:posdef1d} and \eqref{eq:growth1d}
of Theorem \ref{positive}. Recall that $\omega$ is the `ground state'
of the operator $A$. Since $g$ may be non-smooth (it is a step
function in the case of the tree) the differential equation
\eqref{eq:gs} has to be understood in quadratic form sense, i.e., 
\begin{equation}\label{eq:gs2} 
\int_0^\infty \omega' f' g\,dt = \lambda(A) \int_0^\infty \omega f
g\, dt
\end{equation}
for all $f\in H^1(\R_+,g)$ with compact support in $[0,\infty)$. The
following identity is usually called ground state representation. 

\begin{lemma}\label{gsr}
For any $h = \omega^{-1} f \in \omega^{-1} H^1(\R_+,g)$,
\begin{equation}\label{eq:gsr}
\int_0^\infty |f'|^2 g\,dt - \lambda(A) \int_0^\infty |f|^2 g\,dt
= \int_0^\infty |h'|^2 \omega ^2 g\,dt.
\end{equation}
\end{lemma}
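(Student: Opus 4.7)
The plan is to substitute $f = \omega h$, expand $|f'|^2$ via the Leibniz rule, and use the weak equation \eqref{eq:gs2} for $\omega$ to cancel the resulting cross term. Because $\omega$ is the ground state, it is continuous and strictly positive and hence locally bounded away from $0$ and $\infty$; the product rule in $H^1_{\loc}$ therefore gives $f' = \omega' h + \omega h'$ almost everywhere. Squaring and integrating against $g$ produces
\begin{equation*}
\int_0^\infty |f'|^2 g\,dt = \int_0^\infty \bigl[(\omega')^2 h^2 + 2\omega\omega' h h' + \omega^2 |h'|^2\bigr] g\,dt.
\end{equation*}

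I would first establish \eqref{eq:gsr} for $f$ compactly supported in $[0,\infty)$. In that case $\phi := fh = \omega h^2$ also lies in $H^1(\R_+,g)$ with compact support, so it is an admissible test function in \eqref{eq:gs2}. Using $\phi' = \omega' h^2 + 2\omega h h'$, the weak equation gives
\begin{align*}
\int_0^\infty \bigl[(\omega')^2 h^2 + 2\omega\omega' h h'\bigr] g\,dt
&= \int_0^\infty \omega' \phi' g\,dt = \lambda(A_g)\int_0^\infty \omega\phi g\,dt \\
&= \lambda(A_g)\int_0^\infty f^2 g\,dt.
\end{align*}
Subtracting this from the preceding identity yields \eqref{eq:gsr} for compactly supported $f$.

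To pass to general $f \in H^1(\R_+, g)$, I would apply the identity to cutoffs $f_n := \chi_n f$, where $\chi_n \in C_c^\infty([0,\infty))$ satisfies $\chi_n \equiv 1$ on $[0,n]$ and $|\chi_n'| \leq C/n$. Then $f_n \to f$ in $H^1(\R_+,g)$, and $h_n := f_n/\omega = \chi_n h$ satisfies $h_n' = \chi_n h' + \chi_n' h$. The left-hand side of \eqref{eq:gsr} for $f_n$ converges to that for $f$, while the cutoff-induced error term obeys $\int |\chi_n' h|^2 \omega^2 g\,dt \leq (C/n)^2 \int |f|^2 g\,dt \to 0$ since $|h|^2 \omega^2 g = |f|^2 g$ is integrable. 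The main technical point is the a priori control of $\omega h' \in L^2(g)$: the identity for $f_n$ forces $\int \omega^2 |h_n'|^2 g\,dt$ to equal the (convergent) left-hand side for $f_n$, so a Fatou argument combined with the cutoff estimate upgrades this to convergence of $\int \omega^2 |h'|^2 g\,dt$ and yields \eqref{eq:gsr} in general.
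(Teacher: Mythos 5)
Your argument is correct and follows essentially the same route as the paper: you substitute $f=\omega h$, expand $|f'|^2$ by the Leibniz rule, and absorb the cross and $(\omega')^2$ terms by testing the weak equation \eqref{eq:gs2} against $\phi=\omega h^2=\omega|h|^2$ — exactly the paper's pointwise identity $|(\omega h)'|^2=\omega^2|h'|^2+\omega'(\omega|h|^2)'$. The paper simply states "it suffices to consider $h\in C_0^\infty(\overline{\R_+})$" where you spell out the density/cutoff step, but that is a minor difference of presentation, not of substance.
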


We include a sketch of the proof for the sake of completeness.

\begin{proof}
It suffices to consider $h\in C_0^\infty(\overline{\R_+})$. Then
\begin{equation*}
|(\omega h)'|^2 = \omega^2 |h'|^2 + \omega' (\omega |h|^2)'
\end{equation*}
and \eqref{eq:gsr} follows from \eqref{eq:gs2} with $f=\omega |h|^2$.
\end{proof}

With \eqref{eq:gsr} at hand we can proceed to the

\begin{proof}[Proof of Theorem \ref{positive}]
We denote by $B$ the operator in $L_2(\R_+,\omega^2 g)$ corresponding
to the quadratic form 
\begin{equation*}
\int_0^\infty |h'|^2 \omega ^2 g\,dt
\end{equation*}
with form domain $H^1(\R_+,\omega^2 g)$. Then by the ground state
representation~\eqref{gsr} and Glazman's lemma (see
e.g. \cite[Thm. 10.2.3]{BS}) 
\begin{equation*}
N(A-V-\lambda(A)) = N(B-V),
\end{equation*}
and the result follows from Theorem \ref{clr}.
\end{proof}


\section{Sobolev interpolation inequalities}
\label{sec:gn}



In this section we fix a parameter $d\geq1$ and study inequalities of
the form 
\begin{align} \label{eq:gn1}
\left( \int |u|^q (1+t)^{\beta q-1}\,dt\right)^{2/q}
 \leq & \, \, K(q,\beta,d) \left( \int |u'|^2 (1+t)^{d-1}
   \,dt\right)^{\theta} \\
& \left( \int |u|^2 (1+t)^{d-1} \,dt\right)^{1-\theta} \nonumber
\end{align}
for all $u \in H^1(\R_+,(1+t)^{d-1})$. We are interested in the values
of $\beta$ and $q$ for which this inequality holds. We always fix
\begin{equation}\label{eq:theta}
  \theta:=\frac{d-2\beta}2.
\end{equation}
In the endpoint case $q=\infty$ we use the convention that
\eqref{eq:gn1} means
\begin{equation*}
\sup |u|^2 (1+t)^{2\beta}
\leq K(\infty,\beta,d) \left( \int |u'|^2 (1+t)^{d-1} \,dt\right)^{\theta}
\left( \int |u|^2 (1+t)^{d-1} \,dt\right)^{1-\theta}
\end{equation*}
for all $u\in H^1(\R_+,(1+t)^{d-1})$. Note that this makes sense even
in the special case $\beta=0$ (where the product $\beta q$ in
\eqref{eq:gn1} is not well-defined).

\begin{theorem}\label{gn1}
Let $d\geq 1$ and $\frac{d-2}2\leq\beta\leq\frac d2$.
\begin{enumerate}
\item\label{it:gnbetasmall}
If $1<d\leq 2$ and $0<\beta\leq\frac{d-1}2$, or if $d> 2$ and
$\frac{d-2}2\leq\beta\leq\frac{d-1}2$, then \eqref{eq:gn1} holds for
all $2\leq q \leq \infty$.
\item\label{it:gnbetalarge}
If $d\geq 1$ and $\frac{d-1}2<\beta\leq\frac{d}2$, then \eqref{eq:gn1}
holds for all $2\leq q \leq \left(\beta-\frac{d-1}2\right)^{-1}$.
\item\label{it:gnbeta0}
If $1\leq d<2$ and $\beta=0$, then \eqref{eq:gn1} holds for $q=\infty$.
\item\label{it:gnbetaneg}
If $1\leq d\leq 2$ and $-\frac{2-d}2\leq\beta\leq 0$, then
\eqref{eq:gn1} does \emph{not} hold for $2\leq q < \infty$.
\item\label{it:gnbetaneginfty}
If $1\leq d< 2$ and $-\frac{2-d}2\leq\beta< 0$, or if $d=2$ and
$\beta= 0$, then \eqref{eq:gn1} does \emph{not} hold for $q = \infty$.
\item\label{it:gnbetalargeqlarge}
If $d\geq 1$ and $\frac{d-1}2<\beta\leq\frac{d}2$, then \eqref{eq:gn1}
does \emph{not} hold for
$\left(\beta-\frac{d-1}2\right)^{-1}<q\leq\infty$. 
\end{enumerate}
\end{theorem}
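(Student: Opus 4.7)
The plan. The positive parts (1)--(3) and the negative parts (4)--(6) require different methods. For the positive side, I would first establish the endpoint $q=\infty$ case and then interpolate to $q<\infty$ via Hölder's inequality. For $q=\infty$, functions in $H^1(\R_+,(1+t)^{d-1})$ embed into $H^1(\R_+)$ (because the weight is $\geq 1$) and hence decay at infinity, so that
\[
u(t)^2(1+t)^{2\beta}=-2\int_t^\infty uu'\,(1+s)^{2\beta}\,ds-2\beta\int_t^\infty u^2(1+s)^{2\beta-1}\,ds.
\]
Cauchy--Schwarz bounds the first integral by
\[
2\Bigl(\int|u'|^2(1+s)^{d-1}\,ds\Bigr)^{1/2}\Bigl(\int u^2(1+s)^{4\beta-d+1}\,ds\Bigr)^{1/2},
\]
and the exponent $4\beta-d+1$ is $\leq d-1$ precisely when $\beta\leq(d-1)/2$, so in that regime this expression is dominated by the target geometric mean. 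The Hardy-type second term is handled by integrating $(u^2)'(1+s)^{2\beta}$ by parts on $[0,\infty)$, which gives
\[
2\beta\int_0^\infty u^2(1+s)^{2\beta-1}\,ds+u(0)^2=-2\int_0^\infty uu'\,(1+s)^{2\beta}\,ds,
\]
whose right-hand side is again controlled by the same geometric mean via Cauchy--Schwarz.

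Once the $L^\infty$ inequality is in place at $\beta=(d-1)/2$, Hölder's inequality with appropriate exponents yields \eqref{eq:gn1} for all $q\in[2,\infty]$ in part (1) and for $q\leq(\beta-(d-1)/2)^{-1}$ in part (2), by writing the weight $(1+t)^{\beta q-1}$ as a product of $(1+t)^{d-1}$ (paired with $|u|^2$) and an extra factor absorbed by the $L^\infty$ bound. Part (3) (the case $\beta=0$, $q=\infty$, $1\leq d<2$) is a limiting instance of the same argument in which the Hardy term disappears entirely.

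For the negative statements I would use explicit test families and scaling. A smooth cutoff $u_n(t)=\eta(t/n)$ with $\eta\equiv 1$ on $[0,1]$ and $\supp\eta\subset[0,2]$ satisfies
\[
\int|u_n|^2(1+t)^{d-1}\,dt\sim n^d,\qquad \int|u_n'|^2(1+t)^{d-1}\,dt\sim n^{d-2},
\]
so the right-hand side of \eqref{eq:gn1} is of order $n^{d-2\theta}=n^{2\beta}$, while the left-hand integral $\int|u_n|^q(1+t)^{\beta q-1}\,dt$ stays bounded as $n\to\infty$ if $\beta<0$ and grows like $\log n$ if $\beta=0$; this yields (4), the $\beta<0$ half of (5), and, using the logarithmic modification $u_n(t)=\max\bigl(0,1-\log(1+t)/\log n\bigr)$, also the borderline case $d=2$, $\beta=0$, $q=\infty$ in (5). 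For (6) I would translate a fixed bump to infinity: with $u_n(t)=\eta(t-R_n)$ and $R_n\to\infty$, both weighted norms of $u_n$ and $u_n'$ are of order $R_n^{d-1}$, so the right-hand side scales like $R_n^{d-1}$ while the left-hand side raised to $2/q$ scales like $R_n^{2\beta-2/q}$; the ratio diverges precisely when $q>(\beta-(d-1)/2)^{-1}$.

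The main obstacle I anticipate is the sharp handling of the Hardy-type term in the $L^\infty$ step: one must absorb $\int u^2(1+s)^{2\beta-1}\,ds$ into the geometric mean of the two $L^2$ norms without losing an extra factor of the $L^2$-norm of $u$, which is what forces the integration-by-parts identity rather than a naive estimate. A second subtle point is to verify that the critical Hölder exponent in part (2) matches the scaling of the bump test function in (6) at $q=(\beta-(d-1)/2)^{-1}$, so that the range of valid $q$ in the positive statement meets the failure regime in (6) exactly.
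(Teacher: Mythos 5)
Your treatment of the negative statements (4)--(6) is sound and is essentially equivalent to the paper's: you use translated bumps while the paper rescales a fixed compactly supported bump by $u(t)=v(t/l)$ with $l\to0$ for (6) and $l\to\infty$ for (4), (5), and for the $d=2$, $\beta=0$, $q=\infty$ borderline both use the logarithmic profile. The positive parts, however, have a genuine gap. Your $q=\infty$ argument applies Cauchy--Schwarz to $2\int_t^\infty|u||u'|(1+s)^{2\beta}\,ds$ with the split $|u'|(1+s)^{(d-1)/2}\cdot|u|(1+s)^{2\beta-(d-1)/2}$, and after estimating the second factor in $L^2((1+t)^{d-1})$ it produces the bound $\sup|u|^2(1+t)^{2\beta}\lesssim B^{1/2}C^{1/2}$, where $B=\int|u'|^2(1+t)^{d-1}$ and $C=\int|u|^2(1+t)^{d-1}$. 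But the target is $B^\theta C^{1-\theta}$ with $\theta=(d-2\beta)/2$, and for $\beta<(d-1)/2$ one has $\theta>1/2$, so $B^{1/2}C^{1/2}$ and $B^\theta C^{1-\theta}$ are incomparable (the former dominates the latter only when $B\geq C$, which is false in general); the scaling $u\mapsto v(\cdot/l)$ with $l\to\infty$ shows the $(1/2,1/2)$ bound actually fails. The same issue kills your argument for part (3) at $\beta=0$, where $\theta=d/2\neq 1/2$. The paper avoids this by working with $|u|^p$, $p=2/(d-2\beta)$, in the integrand (Lemma \ref{gninfty}), so that Cauchy--Schwarz followed by a further H\"older delivers the correct $(\theta,1-\theta)$ powers; and Lemma \ref{gnbeta0} proves the $\beta=0$ endpoint by splitting the range of integration at a parameter $R$ and optimizing over $R$.

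Even if you had the correct $q=\infty$ inequality for all admissible $\beta$, your plan to deduce the $q<\infty$ cases via H\"older from a single $(q,\beta)=(\infty,(d-1)/2)$ endpoint cannot produce the full two-parameter region: the split
\begin{equation*}
\int|u|^q(1+t)^{\beta q-1}\,dt
\le\Bigl(\sup|u|^2(1+t)^{d-1}\Bigr)^{(q-2)/2}\int|u|^2(1+t)^{\beta q-1-(q-2)(d-1)/2}\,dt
\end{equation*}
forces a single relation between $\beta$ and $q$, and in particular is vacuous at $q=2$. The paper therefore proves an independent $q=2$ endpoint (Lemma \ref{gn2}, via the integration by parts $\int|u|^2(1+t)^{2\beta-1}=-\beta^{-1}\Re\int\bar u\,u'((1+t)^{2\beta}-1)$ and two applications of H\"older; Lemma \ref{gnd1} for $d=1$), and then interpolates between the $q=2$ and $q=\infty$ endpoints at \emph{fixed} $\beta$ for part (1), handling part (2) by a further H\"older reduction to the $\beta\le(d-1)/2$ regime. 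Without that $q=2$ lemma, and without the refined $q=\infty$ estimate, your positive parts do not go through.
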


We refer to Figure 1 below for the region of allowed parameters.

\begin{remark}\label{gnexprelation}
  In \eqref{eq:gn1} the exponent $\beta q-1$ of the weight on the left
  hand side is coupled to the interpolation exponent $\theta$ in
  \eqref{eq:theta}. This is in a certain sense optimal. Indeed, if the
  inequality 
  \begin{align*}
    \left( \int |u|^q (1+t)^{\sigma-1}\,dt\right)^{2/q}
    \leq & \, \, K \left( \int |u'|^2 (1+t)^{d-1}
      \,dt\right)^{\theta}\\
    & \left( \int |u|^2 (1+t)^{d-1} \,dt\right)^{1-\theta} \nonumber
  \end{align*}
  holds for some $\sigma>0$ and all $u\in H^1(\R_+,(1+t)^{d-1})$, then
  necessarily $\sigma\leq q(d-2\theta)/2$. (To see this put
  $u(t)=v(lt)$ and let $l\to 0$.) Note that with the value
  \eqref{eq:theta} of $\theta$ one has $q(d-2\theta)/2=\beta q$. 
\end{remark}

We break the proof into several lemmas which prove inequality
\eqref{eq:gn1} in the endpoint cases.

\begin{lemma}\label{gn2}
If $1< d\leq 2$ and $0<\beta\leq\frac{d-1}2$, or if $d> 2$ and
$\frac{d-2}2\leq\beta\leq\frac{d-1}2$, then \eqref{eq:gn1} holds for
$q=2$ with the constant
\begin{equation*}
K(2,\beta,d) = \beta^{-d+2\beta}.
\end{equation*}
\end{lemma}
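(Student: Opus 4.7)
My plan is to derive the inequality from a three-step chain: integration by parts, Cauchy--Schwarz, and H\"older interpolation. By a standard density argument it suffices to treat $u\in C_c^\infty([0,\infty))$. Abbreviate
\begin{equation*}
I:=\int_0^\infty |u|^2 (1+t)^{2\beta-1}\,dt,\quad A:=\int_0^\infty |u'|^2 (1+t)^{d-1}\,dt,\quad B:=\int_0^\infty |u|^2 (1+t)^{d-1}\,dt,
\end{equation*}
and recall that $\theta=(d-2\beta)/2$.

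First I will write $|u|^2 (1+t)^{2\beta-1}=(2\beta)^{-1}|u|^2\tfrac{d}{dt}(1+t)^{2\beta}$ and integrate by parts (the hypothesis $\beta>0$ makes this legitimate and, together with the compact support of $u$, ensures vanishing at infinity). The boundary contribution at the origin equals $-(2\beta)^{-1}|u(0)|^2\le 0$ and may be discarded, yielding $\beta I\le \int_0^\infty |u||u'|(1+t)^{2\beta}\,dt$. Next, Cauchy--Schwarz applied to the splitting $(1+t)^{2\beta}=(1+t)^{(d-1)/2}(1+t)^{(4\beta-d+1)/2}$ gives
\begin{equation*}
\int_0^\infty |u||u'|(1+t)^{2\beta}\,dt \le A^{1/2} J^{1/2}, \qquad J:=\int_0^\infty |u|^2 (1+t)^{4\beta-d+1}\,dt.
\end{equation*}
Finally, since $4\beta-d+1=(2\beta-1)(1-s)+(d-1)s$ for $s:=(2\beta-d+2)/(d-2\beta)$, H\"older's inequality with exponents $1/(1-s)$ and $1/s$ produces the interpolation $J\le I^{1-s}B^s$. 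Combining the three estimates yields $\beta I\le A^{1/2}I^{(1-s)/2}B^{s/2}$, and solving for $I$ using the identity $1/(1+s)=\theta$ one arrives at $I\le \beta^{-2\theta}A^\theta B^{1-\theta}=\beta^{2\beta-d}A^\theta B^{1-\theta}$, which is exactly the claim.

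I do not expect a serious obstacle. The one verification required is that $s\in[0,1]$, which amounts to $(d-2)/2\le \beta\le (d-1)/2$: the upper bound is hypothesised in both cases of the statement, while the lower bound is automatic when $1<d\le 2$ (since then $(d-2)/2\le 0<\beta$) and is part of the hypothesis when $d>2$. The only minor technical point is the vanishing at infinity of the boundary term in the integration by parts; this is why I first restrict to $C_c^\infty([0,\infty))$ before extending to all of $H^1(\R_+,(1+t)^{d-1})$ by approximation.
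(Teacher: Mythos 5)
Your proof is correct, and the parameter checks are all sound. It is the same basic strategy as the paper's --- integration by parts followed by a chain of H\"older-type inequalities --- but the chain is organized differently, and it is worth seeing how. After the integration-by-parts step the paper does \emph{not} first peel off $A^{1/2}$; instead it applies H\"older with exponents $p=\tfrac{d-2\beta}{d-1-2\beta}$ and $p'$ directly to $\int|u||u'|(1+t)^{2\beta}\,dt$, choosing the split so that the first factor is $I^{1/p}$ and the remaining factor is $\int|u|^{\frac{p-2}{p-1}}|u'|^{\frac p{p-1}}(1+t)^{d-1}\,dt$ (a mixed $|u|$--$|u'|$ integral with the \emph{standard} weight $d-1$); a second H\"older with exponents $\tfrac{2(p-1)}{p-2}$ and $\tfrac{2(p-1)}{p}$ then separates $A$ and $B$. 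You instead use Cauchy--Schwarz (i.e.\ H\"older with $(2,2)$) first to isolate $A^{1/2}$, landing on the pure $|u|^2$ integral $J=\int|u|^2(1+t)^{4\beta-d+1}\,dt$, and then interpolate $J$ between $I$ and $B$. The two routes yield the identical constant $\beta^{2\beta-d}$, as they must, and both degenerate gracefully at the endpoints $\beta=\tfrac{d-1}2$ (your $s=1$) and $\beta=\tfrac{d-2}2$ (your $s=0$). Two small presentational points: the paper writes the derivative as $\frac{d}{dt}\bigl((1+t)^{2\beta}-1\bigr)$ so that the boundary term at $t=0$ vanishes identically rather than being discarded as nonpositive --- a cosmetic but slightly cleaner choice; and the paper treats the equality case $\beta=\tfrac{d-1}2$ separately (since then $p=\infty$), whereas your parameterization by $s$ absorbs that case without a separate argument, which is a modest advantage of your arrangement.
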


\begin{proof}
Integration by parts shows
\begin{align*}
\int |u|^2 (1+t)^{2\beta-1}\,dt
& = (-\beta)^{-1} \Re \int \overline u u'\left((1+t)^{2\beta}-1\right)\,dt \\
& \leq \beta^{-1} \int |u| |u'| (1+t)^{2\beta} \,dt.
\end{align*}
We shall assume now that $\beta<\frac{d-1}2$. The proof in the case of
equality follows along the same lines. Then
$p:=\frac{d-2\beta}{d-1-2\beta}$ satisfies $1<p<\infty$, and by
H\"older we can continue to estimate
\begin{align*}
& \int |u|^2 (1+t)^{2\beta-1}\,dt
\leq \beta^{-1} \left(\int |u|^2 (1+t)^{2\beta-1} \,dt\right)^{1/p} \\
& \qquad \times \left(\int |u|^{\frac{p-2}{p-1}} |u'|^{\frac p{p-1}}
  (1+t)^{\frac{2\beta(p-1)+1}{p-1}} \,dt \right)^{\frac{p-1}p}.
\end{align*}
By the definition of $p$ one has
\begin{equation*}
\frac{2\beta(p-1)+1}{p-1}= \frac{(d-1)(p-2)}{2(p-1)} + \frac{(d-1)p}{2(p-1)},
\end{equation*}
and hence again by H\"older,
\begin{align*}
& \int |u|^{\frac{p-2}{p-1}} |u'|^{\frac p{p-1}}
(1+t)^{\frac{2\beta(p-1)+1}{p-1}} \,dt \\
& \qquad \leq \left( \int |u|^{2} (1+t)^{(d-1)} \,dt \right)^{\frac{p-2}{2(p-1)}}
\left( \int |u'|^{2} (1+t)^{(d-1)} \,dt \right)^{\frac p{2(p-1)}}.
\end{align*}
This proves the inequality with the claimed constant.
\end{proof}

\begin{lemma}\label{gninfty}
If $1<d\leq 2$ and $0<\beta\leq\frac{d-1}2$, or if $d> 2$ and
$\frac{d-2}2\leq\beta\leq\frac{d-1}2$, then \eqref{eq:gn1} holds for
$q=\infty$ with the constant
\begin{align*}
K(\infty,\beta,d) = \left(\frac2{d-2\beta}\right)^{d-2\beta}
\left(\frac{d-1-2\beta}{2\beta}\right)^{d-1-2\beta}.
\end{align*}
\end{lemma}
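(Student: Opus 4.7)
The plan is to derive the inequality from the identity obtained by integrating $\tfrac{d}{ds}[(1+s)^{2\beta}u(s)^2]$ from $t$ to $\infty$, namely
\begin{equation*}
(1+t)^{2\beta} u(t)^2 + 2\beta \int_t^\infty (1+s)^{2\beta-1} u^2\,ds = -2\int_t^\infty (1+s)^{2\beta} u u'\,ds,
\end{equation*}
whose boundary contribution at $\infty$ vanishes along a subsequence because $(1+s)^{2\beta}u^2$ is integrable against $(1+s)^{-1}\,ds$ under the hypothesis $\beta\leq (d-1)/2$. Bounding the right side by its absolute value and applying Cauchy--Schwarz with the split $(1+s)^{2\beta} = (1+s)^{2\beta-(d-1)/2}(1+s)^{(d-1)/2}$ yields
\begin{equation*}
\phi(t) + 2\beta P(t) \leq 2\left(\int_t^\infty u^2 (1+s)^{4\beta-(d-1)}\,ds\right)^{1/2} A^{1/2},
\end{equation*}
where $\phi(t):=(1+t)^{2\beta}u(t)^2$, $P(t):=\int_t^\infty u^2(1+s)^{2\beta-1}\,ds$ and $A:=\int_0^\infty(u')^2(1+s)^{d-1}\,ds$.

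The key algebraic step is a log-convex splitting of the weight. Setting $\mu := 2(d-1-2\beta)/(d-2\beta)$, the assumption $(d-2)/2\leq \beta\leq (d-1)/2$ forces $\mu\in[0,1]$ together with the identity $4\beta-(d-1)=\mu(2\beta-1)+(1-\mu)(d-1)$, so H\"older gives
\begin{equation*}
\int_t^\infty u^2(1+s)^{4\beta-(d-1)}\,ds \leq P(t)^\mu Q^{1-\mu}, \qquad Q := \int_0^\infty u^2 (1+s)^{d-1}\,ds.
\end{equation*}
The previous estimate then reads $\phi(t)+2\beta P(t) \leq 2 P(t)^{\mu/2} Q^{(1-\mu)/2} A^{1/2}$, to which I apply Young's inequality with exponents $p=2/\mu$, $p'=2/(2-\mu)$ and weight $\varepsilon=(2\beta/\mu)^{\mu/2}$, chosen precisely so that the resulting $P(t)$-contribution equals $2\beta P(t)$ and cancels with the left side. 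The surviving term is
\begin{equation*}
\phi(t) \leq (2-\mu)\left(\tfrac{\mu}{2\beta}\right)^{\mu/(2-\mu)} A^{1/(2-\mu)} Q^{(1-\mu)/(2-\mu)},
\end{equation*}
and the identities $1/(2-\mu)=\theta$, $2-\mu=1/\theta$ and $\mu/(2-\mu)=d-1-2\beta$ reduce the prefactor to $K(\infty,\beta,d) = (1/\theta)^{d-2\beta}((d-1-2\beta)/(2\beta))^{d-1-2\beta}$ after a short computation.

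The endpoints $\mu=0$ (i.e.\ $\beta=(d-1)/2$) and $\mu=1$ (i.e.\ $\beta=(d-2)/2$) need brief separate treatment: in the former, no Young step is required since the first factor is already $Q^{1/2}$, and the constant $2$ emerges directly; in the latter, one uses the elementary $2xy\leq 2\beta\, x^2 + y^2/(2\beta)$ to get the constant $1/(d-2)$. Both are consistent with the formula for $K(\infty,\beta,d)$ in their respective limits.

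The main subtlety, and the mechanism that produces the \emph{sharp} constant, is arranging the H\"older exponent and Young parameter so that the $P(t)$-contributions cancel exactly; the value $\mu = 2(d-1-2\beta)/(d-2\beta)$ is forced by this cancellation requirement, and this is why the range $(d-2)/2\leq\beta\leq(d-1)/2$ appears in the hypothesis. Verifying that $(1+s)^{2\beta}u(s)^2\to 0$ along a subsequence at infinity, used to justify the opening identity, is a minor technicality that follows from the integrability of $u^2(1+s)^{2\beta-1}$.
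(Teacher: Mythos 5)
Your proof is correct, including the constant, but it follows a genuinely different route than the paper's. The paper sets $p=2/(d-2\beta)$ and applies the fundamental theorem of calculus to $|u|^p$, getting $|u(t)|^p\le p\int_t^\infty|u|^{p-1}|u'|\,ds$; a Cauchy--Schwarz with the weight split $(1+s)^{(d-1)/2}\cdot(1+s)^{-(d-1)/2}$ followed by a single H\"older step on the tail integral then gives the sharp constant in one pass. You instead apply the fundamental theorem of calculus to the weight-dressed square $(1+s)^{2\beta}u^2$, which produces the extra zeroth-order term $2\beta P(t)$ on the left; the constant is then forced by a log-convex splitting of the weight (your $\mu=2(d-1-2\beta)/(d-2\beta)$) combined with a weighted Young inequality tuned to cancel $2\beta P(t)$ exactly. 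Both methods are standard and both hit the same sharp constant, which is not a coincidence: they are dual parametrizations of the same underlying optimization. One thing your approach buys is a transparent explanation for \emph{why} the hypothesis $\max\{0,(d-2)/2\}\leq\beta\leq(d-1)/2$ appears (it is exactly $\mu\in[0,1]$); the paper's $p$-power version encodes the same range as $1\le p\le 2$, which is slightly less self-explanatory.

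One minor imprecision worth tightening: you justify the vanishing boundary term by saying $(1+s)^{2\beta}u^2$ is ``integrable against $(1+s)^{-1}\,ds$.'' The cleaner formulation is that $u^2(1+s)^{2\beta-1}\in L^1$ (which holds since $2\beta-1\le d-1$), and therefore $\liminf_{s\to\infty}(1+s)\cdot u^2(1+s)^{2\beta-1}=\liminf_{s\to\infty}(1+s)^{2\beta}u^2=0$; one then truncates the FTC identity at a subsequence realizing this $\liminf$ and passes to the limit, using that the two remaining integrals on $(t,\infty)$ are absolutely convergent (the second one by Cauchy--Schwarz together with $4\beta-(d-1)\le d-1$). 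You should also state explicitly that the absolute convergence of $\int_t^\infty(1+s)^{2\beta}|u||u'|\,ds$ relies on $\beta\le(d-1)/2$, which is the same hypothesis you need anyway.
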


Here we use the convention that
$0^0=1$. Hence for $\beta=\frac{d-1}2$ one has
$K(\infty,\frac{d-1}2,d) = 2$.

\begin{proof}
Let $p:=\frac2{d-2\beta}$. Our assumptions imply that $\frac2d<p\leq
2$ if $1<d\leq 2$ and $1\leq p\leq 2$ if $d> 2$.
By Schwarz we estimate
\begin{align*}
|u(t)|^p & \leq p \int_t^\infty |u|^{p-1} |u'| \,ds \\
& \leq p \left(\int_0^\infty |u'|^2 (1+s)^{d-1} \,ds\right)^{1/2}
\left(\int_t^\infty |u|^{2(p-1)} (1+s)^{-d+1} \,ds\right)^{1/2}
\end{align*}
This proves the assertion if $p=1$, i.e., $\beta=\frac{d-2}2$ and
$d>2$. If $p=2$ the assertion follows from the estimate
\begin{align*}
\int_t^\infty |u|^{2(p-1)} (1+s)^{-d+1} \,ds
\leq (1+t)^{-2(d-1)} \int_0^\infty |u|^{2(p-1)} (1+s)^{d-1} \,ds.
\end{align*}
In the remaining case $1<p<2$ we use H\"older to obtain
\begin{align*}
& \int_t^\infty |u|^{2(p-1)} (1+s)^{-d+1} \,ds \\
& \qquad \leq \left(\int_t^\infty (1+s)^{-\frac{(d-1)p}{2-p}} \,ds\right)^{2-p}
\left(\int_0^\infty |u|^{2} (1+s)^{d-1} \,ds\right)^{p-1} \\
& \qquad = \left(\frac{2-p}{dp-2}\right)^{2-p} (1+t)^{-dp+2}
\left(\int_0^\infty |u|^{2} (1+s)^{d-1} \,ds\right)^{p-1}.
\end{align*}
This proves the inequality with the claimed constant.
\end{proof}

\begin{lemma}\label{gnbeta0}
If $1\leq d<2$ and $\beta=0$, then \eqref{eq:gn1} holds for $q=\infty$
with the constant
\begin{align*}
K(\infty,0,d) = (2d)^d (2(d-1))^{-2(d-1)} (2-d)^{-1}.
\end{align*}
\end{lemma}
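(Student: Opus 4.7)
The inequality to be proved reads, for $1\leq d<2$,
\[
\sup_{t\geq 0} |u(t)|^2 \leq K(\infty,0,d)\, A^{d/2} B^{(2-d)/2},
\]
where $A := \int_0^\infty |u'|^2 (1+t)^{d-1}\,dt$ and $B := \int_0^\infty |u|^2 (1+t)^{d-1}\,dt$. The approach parallels the proofs of Lemmas \ref{gn2} and \ref{gninfty}, but the borderline value $\beta=0$ is exactly the case where the pure H\"older argument used there produces a divergent $(1+t)$-integral; the remedy is a splitting of the domain that exploits the $L^\infty$ bound $|u|\leq M:=\sup|u|$.

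First I would use the fundamental theorem of calculus. Since $u\to 0$ at infinity (a consequence of $u\in H^1(\R_+,(1+t)^{d-1})$ when $d\geq 1$), we have $u(t)^2=-2\int_t^\infty u u'\,ds$, hence $u(t)^2\leq 2\int_t^\infty |u||u'|\,ds$. Applying Cauchy-Schwarz with the factorization $|u||u'|=[|u|(1+s)^{(1-d)/2}][|u'|(1+s)^{(d-1)/2}]$ gives
\[
u(t)^2\leq 2A^{1/2}\Bigl(\int_t^\infty u^2(1+s)^{1-d}\,ds\Bigr)^{1/2}.
\]
Taking sup in $t$ (and noting that the inner integral is largest at $t=0$), one obtains $M^2\leq 2A^{1/2} I^{1/2}$, where $I:=\int_0^\infty u^2(1+s)^{1-d}\,ds$.

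The key step is to bound $I$ in terms of $B$ and $M$ via a splitting at some $T\geq 0$: on $[0,T]$ one uses $|u|\leq M$ and integrates the weight explicitly; on $[T,\infty)$ one writes $(1+s)^{1-d}=(1+s)^{d-1}\cdot(1+s)^{-2(d-1)}$ and bounds $(1+s)^{-2(d-1)}\leq (1+T)^{-2(d-1)}$. This yields
\[
I \leq \frac{M^2 (1+T)^{2-d}}{2-d} + (1+T)^{-2(d-1)}\, B.
\]
Here the factor $(2-d)^{-1}$ in the final constant originates from the elementary integral $\int_0^T (1+s)^{1-d}\,ds$.

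Minimizing the right-hand side over $T\geq 0$ gives $(1+T^*)^d = 2(d-1)B/M^2$, and substituting back, after using the identity $[2(d-1)]^{(2-d)/d}=2(d-1)\cdot[2(d-1)]^{-2(d-1)/d}$ to combine the two terms, produces
\[
I \leq \frac{d}{2-d}\, [2(d-1)]^{-2(d-1)/d}\, M^{4(d-1)/d}\, B^{(2-d)/d}.
\]
Inserting this into $M^4\leq 4AI$, noting that $M^{4-4(d-1)/d}=M^{4/d}$, and raising to the $d/2$-power isolates $M^2$ in the desired form. The explicit constant $K(\infty,0,d)=(2d)^d(2(d-1))^{-2(d-1)}(2-d)^{-1}$ emerges from this bookkeeping (possibly after rebalancing the optimization so that the two terms of the split are taken in the proportion prescribed by the statement). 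The case $d=1$ is degenerate: $(1+T)^*\geq 1$ forces $T^*=0$, the splitting reduces to a single direct Cauchy-Schwarz estimate $M^2\leq 2A^{1/2}B^{1/2}$, and $K(\infty,0,1)=2$ matches via the convention $0^0=1$.

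The main obstacles are (i) tracking the constants through the $T$-optimization so that they assemble into the precise form stated, and (ii) verifying that the argument behaves correctly at the endpoints, namely $d=1$ (where $T^*=0$ and $(d-1)^{d-1}$ is interpreted via $0^0=1$) and $d\to 2^-$ (where both the stated constant and the intermediate integral $\int_0^T(1+s)^{1-d}\,ds$ blow up like $(2-d)^{-1}$, confirming that the exponent $1-\theta=(2-d)/2$ of $B$ is sharp at this endpoint).
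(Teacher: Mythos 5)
Your strategy---split the domain at a parameter $T$, use $|u|\leq M$ on the inner piece and the weighted $L^2$-norm on the outer piece, then optimize $T$---is indeed the paper's strategy in outline. The two concrete differences are worth flagging.

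First, the order of operations. The paper splits the integral $\int_0^\infty |u||u'|\,ds$ itself at $R$ and applies Cauchy--Schwarz \emph{separately} on each piece (pairing $|u|\leq\|u\|_\infty$ with $|u'|$ on $[0,R]$, and $|u|(1+s)^{(d-1)/2}$ with $|u'|(1+s)^{(d-1)/2}$ on $[R,\infty)$). You instead apply a \emph{single global} Cauchy--Schwarz to get $M^2\leq 2A^{1/2}I^{1/2}$ with $I=\int_0^\infty u^2(1+s)^{1-d}\,ds$, and only then split $I$. These are genuinely different computations: carrying your optimization to the end gives the constant
\[
K_{\text{yours}} = \left(\frac{4d}{2-d}\right)^{d/2}\,\bigl(2(d-1)\bigr)^{-(d-1)},
\]
which does \emph{not} equal the stated $K(\infty,0,d)=(2d)^d(2(d-1))^{-2(d-1)}(2-d)^{-1}$. (At $d=3/2$ yours is about $6.45$ versus the stated $10.39$; the ratio is $d^{d/2}(2-d)^{-(2-d)/2}(2(d-1))^{-(d-1)}$.) Your constant is actually the \emph{smaller} one, so---if the argument were complete---it would still prove the lemma a fortiori; but the remark that the stated constant ``emerges from this bookkeeping'' is not accurate, and no amount of ``rebalancing'' is needed: you simply get a different (better) bound.

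Second, and this is the real gap: you work directly with the inhomogeneous weight $(1+t)^{d-1}$, so the split parameter must satisfy $T\geq 0$, i.e. $x:=1+T\geq 1$. Your unconstrained optimum $x^*=(2(d-1)B/M^2)^{1/d}$ can lie in $(0,1)$ whenever $M^2>2(d-1)B$ (and this regime occurs, e.g.\ for a bump of unit height concentrated near the origin). You note the constraint only in the degenerate case $d=1$, but it bites for all $1<d<2$ as well, and in that regime the bound $I\leq M^2(1+T)^{2-d}/(2-d)+(1+T)^{-2(d-1)}B$ at $T=0$ does not immediately yield the desired form $M^2\lesssim A^{d/2}B^{(2-d)/2}$; a separate argument is needed. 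The paper sidesteps this cleanly by first reducing to the homogeneous weight $s^{d-1}$ (legitimate since $s^{d-1}\leq(1+s)^{d-1}$ enlarges both integrals on the right), after which the split point $R$ ranges over all of $(0,\infty)$ and the optimization has no boundary constraint. If you add that one homogenization step at the outset, your computation closes and yields the lemma with the smaller constant above.
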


\begin{proof}
If $d=1$ one has
\begin{equation}\label{eq:agmon}
|u(t)|^2 \leq 2 \int_t^\infty |u| |u'|\,ds
\leq 2 \left(\int_0^\infty |u|^2 \,ds \right)^{1/2}
\left(\int_0^\infty |u'|^2\,ds\right)^{1/2},
\end{equation}
as claimed. If $1<d<2$ then we estimate for any $R>0$
\begin{align*}
|u(t)|^2
& \leq 2 \left( \int_0^R |u| |u'|\,ds + \int_R^\infty |u| |u'|\,ds \right) \\
& \leq 2 \left( \left(\int_0^\infty |u'|^2 s^{d-1} \,ds \right)^{1/2}
  \|u\|_\infty \left(\int_0^R s^{-d+1} \,ds \right)^{1/2} \right. \\
& \qquad\qquad \left.
+ \left(\int_0^\infty |u'|^2 s^{d-1} \,ds \right)^{1/2}
\left(\int_0^\infty |u|^2 s^{d-1} \,ds \right)^{1/2} R^{-d+1} \right)
\\
& = 2 \left(\int_0^\infty |u'|^2 s^{d-1} \,ds \right)^{1/2}
\Bigg [ \|u\|_\infty (2-d)^{-1/2} R^{(2-d)/2} \\
& \qquad \qquad \qquad   \qquad \qquad \qquad  + \left(\int_0^\infty
    |u|^2 s^{d-1} \,ds \right)^{1/2} R^{-d+1} \Bigg ].v
\end{align*}
Choosing $t$ such that $u(t)=\|u\|_\infty$ and optimizing with respect
to $R$ we find that
\begin{equation*}
\|u\|_\infty^2 \leq K \left(\int |u'|^2 s^{d-1} \,ds \right)^{d/2}
\left(\int |u|^2 s^{d-1} \,ds \right)^{(2-d)/2}
\end{equation*}
with the constant as claimed. This implies (and, by a scaling
argument, is actually equivalent to) the assertion.
\end{proof}

\begin{lemma}\label{gnd1}
If $d=1$ and $0<\beta\leq\frac{1}2$, then \eqref{eq:gn1} holds for
$q=2$ with the constant
\begin{equation*}
K(2,\beta,1) = 2^{-2\beta}(1-2\beta)^{2\beta-1}\beta^{-1}.
\end{equation*}
\end{lemma}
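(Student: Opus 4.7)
The approach of Lemma~\ref{gn2} breaks down at $d=1$ because the H\"older exponent $p=(d-2\beta)/(d-1-2\beta)$ becomes degenerate. My plan is to replace the interpolation used there by the Agmon-type pointwise bound $\|u\|_\infty^2 \leq 2\|u\|_2\|u'\|_2$, which is precisely \eqref{eq:agmon}, together with the decay of the weight $(1+t)^{2\beta-1}$ as $t\to\infty$ when $0<\beta<1/2$. The endpoint $\beta=1/2$ is trivial because the weight equals $1$ and $K(2,1/2,1)=1$, so I assume $0<\beta<1/2$ throughout.

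\textbf{Truncation.} For any $T\geq 0$, I split the integral at $T$, bounding $|u|\leq\|u\|_\infty$ on $[0,T]$ and exploiting the monotonicity of $(1+t)^{2\beta-1}$ on $[T,\infty)$. After evaluating the elementary integral of the weight and applying \eqref{eq:agmon} to $\|u\|_\infty^2$, this produces
\begin{equation*}
\int_0^\infty |u|^2(1+t)^{2\beta-1}\,dt \;\leq\; F(s), \qquad F(s):=\frac{\|u\|_2\|u'\|_2}{\beta}\,s^{2\beta}+\|u\|_2^2\,s^{2\beta-1},
\end{equation*}
where $s:=1+T\geq 1$.

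\textbf{Optimization.} Solving $F'(s)=0$ yields the unconstrained minimizer $s^\ast = (1-2\beta)\|u\|_2/(2\|u'\|_2)$. If $s^\ast\geq 1$, substituting $s=s^\ast$ makes both terms of $F(s^\ast)$ carry the common factor $\|u\|_2^{1+2\beta}\|u'\|_2^{1-2\beta}$, and the sum of their coefficients collapses to
\begin{equation*}
\frac{(1-2\beta)^{2\beta-1}}{2^{2\beta}}\Bigl(\frac{1-2\beta}{\beta}+2\Bigr) \;=\; \frac{(1-2\beta)^{2\beta-1}}{2^{2\beta}\beta} \;=\; K(2,\beta,1),
\end{equation*}
via the elementary identity $(1-2\beta)/\beta+2 = 1/\beta$. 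If instead $s^\ast<1$, equivalently $\|u\|_2 < 2(1-2\beta)^{-1}\|u'\|_2$, I bypass the split and use the crude bound $(1+t)^{2\beta-1}\leq 1$ to estimate the left-hand side by $\|u\|_2^2$. The desired inequality then reduces to $\|u\|_2^{1-2\beta}\leq K(2,\beta,1)\|u'\|_2^{1-2\beta}$, and given the case hypothesis it suffices to verify $(2/(1-2\beta))^{1-2\beta}\leq K(2,\beta,1)$, which after simplification is equivalent to $\beta^{-1}\geq 2$ and hence holds.

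\textbf{Main obstacle.} The principal difficulty is the case split forced by the constraint $s\geq 1$. Once that is accepted, the second case is a short algebraic check and the first case rests on verifying that the two coefficients in $F(s^\ast)$ telescope exactly into the claimed $K(2,\beta,1)$ rather than into a strictly larger constant; the latter is a careful but routine arithmetic manipulation.
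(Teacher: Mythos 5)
Your proof is correct, and each step checks out: the splitting bound $F(s)$, the computation of the unconstrained minimizer $s^\ast=(1-2\beta)\|u\|_2/(2\|u'\|_2)$, the telescoping of the two coefficients via $(1-2\beta)/\beta+2=1/\beta$, and the verification that the boundary case $s^\ast<1$ forces $\|u\|_2/\|u'\|_2<2/(1-2\beta)$ and hence delivers a smaller constant since $2\beta\leq 1$.

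The route is genuinely different from the paper's. The paper first passes to the scale-homogeneous inequality with weight $t^{2\beta-1}$, noting in a parenthetical that a scaling argument shows this to be equivalent (with the same constant) to \eqref{eq:gn1}; one direction is immediate since $(1+t)^{2\beta-1}\leq t^{2\beta-1}$ for $2\beta-1<0$, and the other follows by dilation. Working with the pure power $t^{2\beta-1}$ removes the lower limit on the splitting radius $R$, so the optimization is unconstrained and there is no case distinction. You instead work with the weight $(1+t)^{2\beta-1}$ directly, which forces $s=1+T\geq 1$ and hence a two-case analysis. Your approach is more self-contained (it does not lean on the scaling equivalence), at the price of the extra boundary case; the paper's is shorter once the homogenization is accepted. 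Both yield the same constant $K(2,\beta,1)=2^{-2\beta}(1-2\beta)^{2\beta-1}\beta^{-1}$.
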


\begin{proof}
It suffices to prove the inequality
\begin{equation*}
\int |v|^2 s^{-1+2\beta}\,ds
\leq K \left( \int |v'|^2 \,ds\right)^{(1-2\beta)/2}
\left( \int |v|^2 s^{d-1} \,ds\right)^{(1+2\beta)/2}.
\end{equation*}
(Actually, a scaling argument as in the proof of Theorem \ref{gn1}
below shows that this inequality is equivalent -- with the same
constant -- to the inequality \eqref{eq:gn1}.) Using \eqref{eq:agmon}
we estimate for any $R>0$
\begin{align*}
\int |v|^2 s^{-1+2\beta}\,ds
& \leq \|v\|_\infty^2 \int_0^R s^{-1+2\beta}\,ds + \|v\|_2^2 R^{-1+2\beta} \\
& \leq \beta^{-1} \|v\| \|v'\| R^{2\beta} + \|v\|_2^2 R^{-1+2\beta},
\end{align*}
and the claim follows by optimizing with respect to $R$.
\end{proof}

\begin{proof}[Proof of Theorem \ref{gn1}]
First assume that $1<d\leq 2$ and $0<\beta\leq\frac{d-1}2$, or $d> 2$
and $\frac{d-2}2\leq\beta\leq\frac{d-1}2$. The assertion
\eqref{it:gnbetasmall} has been proved in the endpoint cases $q=2$ and
$q=\infty$ in Lemmas \ref{gn2} and \ref{gninfty}. Estimating
\begin{equation*}
\int |u|^q (1+t)^{\beta q-1}\,dt
\leq \sup \left(|u|^{q-2}(1+t)^{\beta (q-2)}\right) \int |u|^2
(1+t)^{\beta 2-1}\,dt
\end{equation*}
we obtain the assertion \eqref{it:gnbetasmall} also in the case $2<q<\infty$.

Next we prove the assertion \eqref{it:gnbetalarge}. Let $d\geq 1$,
$\frac{d-1}2<\beta\leq\frac{d}2$. First assume that $q=2$. If $d=1$,
the inequality holds by Lemma \ref{gnd1}. If $d>1$ we put $p :(2\beta-d+1)^{-1}$ and apply H\"older's inequality to find
\begin{equation*}
\int |u|^2 (1+t)^{2\beta -1}\,dt
\leq \left( \int |u|^2 (1+t)^{d-2}\,dt\right)^{\frac{p-1}p}
\leq \left(\int |u|^2 (1+t)^{d-1}\,dt\right)^{\frac1p}.
\end{equation*}
Estimating the first factor on the right side using Lemma \ref{gn2}
with $\beta\leq\frac{d-1}2$ we obtain the assertion in the case
$q=2$. Now let $q= \left(\beta-\frac{d-1}2\right)^{-1}$. We estimate
\begin{equation*}
\int |u|^q (1+t)^{2\beta -1}\,dt
\leq \left( \sup |u|^2(1+t)^{d-1} \right)^{\frac{d-2\beta}{2\beta-d+1}}
\left(\int |u|^2 (1+t)^{d-1}\,dt\right).
\end{equation*}
The first factor on the right side is estimated using \eqref{eq:agmon}
if $d=1$ and using Lemma \ref{gninfty} with $\beta\leq\frac{d-1}2$ if
$d>1$. This proves the assertion in the case $q\left(\beta-\frac{d-1}2\right)^{-1}$. By H\"older's inequality we
obtain \eqref{it:gnbetalarge} for arbitrary
$2<q<\left(\beta-\frac{d-1}2\right)^{-1}$.

The assertion \eqref{it:gnbeta0} was proved in Lemma \ref{gnbeta0}.

To prove the negative results let $1\leq d\leq 2$ and assume that \eqref{eq:gn1} holds for some $\beta$ and some $2\leq q\leq\infty$. We apply the inequality to
the function $u(t)=v(t/l)$, where $v$ is a smooth function with
bounded support. Letting $l\to\infty$ we obtain
\begin{equation}\label{eq:gnhom}
\left( \int |v|^q s^{\beta q-1}\,ds\right)^{2/q}
\leq K(q,\beta,d) \left( \int |v'|^2 s^{d-1} \,ds\right)^{\theta}
\left( \int |v|^2 s^{d-1} \,ds\right)^{1-\theta}.
\end{equation}
Note that $v$ can be chosen non-zero in a neighborhood of the
origin. We deduce that the inequality can not hold for $\beta<0$, and
if $q<\infty$ then it can not hold for $\beta=0$ either. This proves
assertion \eqref{it:gnbetaneg} and the first part of
\eqref{it:gnbetaneginfty}. It remains to prove that \eqref{eq:gn1} or
equivalently \eqref{eq:gnhom} does not hold if $d=2$, $\beta=0$ and
$q=\infty$. This follows by considering the sequence of trial
functions $v_n(s):=\min\{1,(\log n -\log s)/\log n\}$ if $s\leq n$ and
$v_n(s)=0$ for $s> n$.

Finally, to prove \eqref{it:gnbetalargeqlarge} let $d\geq 1$ and $\frac{d-1}2<\beta\leq\frac{d}2$. Again we apply the inequality to the function $u(t)=v(t/l)$, where $v$ is a smooth function with bounded support. As $l\to 0$, the left hand side decays like $l^{2/q}$ (resp. becomes constant when $q=\infty$) whereas the right hand side decays like $l^{2\beta-d+1}$. We conclude that the condition $q\leq \left(\beta-\frac{d-1}2\right)^{-1}$ is necessary for \eqref{eq:gn1} to hold.
\end{proof}


\section{Estimates for moments of eigenvalues}
\label{sec:moments}


Our goal in this section will be to prove the Lieb-Thirring bounds in Theorem \ref{mainA0}. Throughout we will assume that $g$ has power-like growth in the sense of \eqref{eq:gpower} for some $d\geq 1$.


\subsection{One-bound-state inequalities and duality}
\label{sec:duality}

A first step towards Theorem \ref{mainA0} is to prove that the lowest eigenvalue of the operator $A_g-V$ can be estimated from below by a weighted $L_p$-norm of the potential.

\begin{theorem} \label{lowest}
Assume \eqref{eq:gpower} for some $d\geq 1$ and let $a, \gamma \geq 0$. Then the inequality
\begin{equation} \label{eq:lowest}
\sup \spec \left( (A_g - V)_-^{\gamma} \right) \leq C
\int_{\R_+} V(t)_+^{\gamma + \frac{a + 1}{2}} g(t)^{\frac a{d-1}} \, dt,
\qquad C=C(\gamma,a,d,c_1,c_2),
\end{equation}
holds for all $V$ if and only if $a$ and $\gamma$ satisfy the assumptions of Theorem~\ref{mainA0}.
\end{theorem}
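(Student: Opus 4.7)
The sufficient direction is proved by combining a variational argument with the Sobolev interpolation inequalities of Theorem~\ref{gn1}; the necessary direction is obtained by testing against carefully chosen potentials. For sufficiency, we may assume $\lambda_1 := \inf\spec(A_g - V) < 0$, and let $u$ be a corresponding normalized eigenfunction (or minimizing sequence), so that
\[
|\lambda_1| = \int_0^\infty V\,|u|^2 g\,dt - \int_0^\infty |u'|^2 g\,dt.
\]
Set $p := \gamma + (1+a)/2$ and apply H\"older's inequality with exponents $(p, p')$ to the first integral. The second factor takes the form $\bigl(\int |u|^{2p'} g^{p'} g^{-ap'/(p(d-1))}\,dt\bigr)^{1/p'}$, whose weight is, by \eqref{eq:gpower}, comparable to $(1+t)^{\beta q - 1}$ with
\[
q := \frac{2p}{p-1}, \qquad \beta := \tfrac12\Bigl(d - \frac{a+1}{p}\Bigr).
\]

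With this identification the interpolation exponent $\theta = (d - 2\beta)/2$ from \eqref{eq:theta} equals $(a+1)/(2p)$, and Theorem~\ref{gn1} yields the master estimate
\[
|\lambda_1| + X \leq C\,I^{1/p}\,X^{(a+1)/(2p)},
\qquad X := \|u'\|^2_{L_2(\R_+,g)},
\qquad I := \int_0^\infty V_+^p\, g^{a/(d-1)}\, dt.
\]
When $\gamma > 0$ the exponent $(a+1)/(2p)$ is strictly less than $1$, so maximizing the right-hand side in $X \geq 0$ (equivalently, applying Young's inequality) together with the identity $p - (a+1)/2 = \gamma$ yields $|\lambda_1|^\gamma \leq C\,I$, which is \eqref{eq:lowest}. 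The case $\gamma = 0$, which under the hypotheses of Theorem~\ref{mainA0} only occurs in part~\eqref{it:clr}, follows already from Theorem~\ref{clr}.

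The delicate piece of bookkeeping is to verify that for each of the three parametric regimes of Theorem~\ref{mainA0} the pair $(\beta, q)$ lies in the admissible region of Theorem~\ref{gn1}. When $\beta \leq (d-1)/2$ one lands in Part~\eqref{it:gnbetasmall} of Theorem~\ref{gn1}, where any $q \in [2,\infty]$ is allowed; when $(d-1)/2 < \beta \leq d/2$ one must check $q \leq (\beta - (d-1)/2)^{-1}$, which after simplification reduces to $a \geq 0$. The strict inequality $\gamma > (1+a)(2-d)/(2d)$ in part~\eqref{it:largea} is forced precisely because the threshold value produces $\beta = 0$, for which Theorem~\ref{gn1}(\ref{it:gnbeta0}) only supplies the Sobolev estimate at $q = \infty$, which is incompatible with the finite value of $q = 2p/(p-1)$ appearing here.

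For necessity, the two thresholds are shown to be sharp by explicit tests. Dirac-concentrated potentials $V_n = \beta_0\, n \chi_{(0, 1/n)}$, with $\beta_0$ fixed so that a compactly supported trial function near the origin ensures $|\lambda_1(A_g-V_n)| \geq c > 0$ uniformly in $n$, force the condition $\gamma \geq (1-a)/2$, since the right-hand side of \eqref{eq:lowest} behaves like $n^{p-1}\to 0$ whenever $p<1$. The second threshold, $\gamma > (1+a)(2-d)/(2d)$ in part~\eqref{it:largea}, is enforced by the weak-coupling scaling $V \mapsto \alpha V$ with $V$ of compact support: by the results underlying the weak-coupling remark (see~\cite{K}), one has $|\lambda_1(A_g - \alpha V)| \sim \alpha^{2/(2-d)}$ as $\alpha\to 0$, and matching this against the right-hand side $\sim \alpha^p$ gives the stated threshold. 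The main obstacle is the parameter-matching in the second and third paragraphs: identifying precisely which pairs $(\beta, q)$ fall inside the admissible region of Theorem~\ref{gn1} is what pins down the exact conditions on $\gamma$, $a$, $d$ in the three cases.
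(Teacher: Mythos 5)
Your sufficiency argument is correct in outline and is, in effect, the ``if'' direction of the paper's Proposition~\ref{duality} unfolded into a direct variational computation: the H\"older splitting, the identifications $q=2p'$, $\beta=\tfrac12\bigl(d-(a+1)/p\bigr)$, $\theta=(a+1)/(2p)$, and the optimization in $X$ recover exactly the relations \eqref{eq:duality}--\eqref{eq:dualityconst}, so your route and the paper's proof via duality plus Theorem~\ref{gn1} are essentially the same. One small clarification in the bookkeeping: when $\beta\leq(d-1)/2$ you do \emph{not} automatically land in Part~\eqref{it:gnbetasmall} of Theorem~\ref{gn1}; one also needs $\beta>0$ when $1<d\leq 2$ (resp.\ $\beta\geq(d-2)/2$ when $d>2$), and for $d=1$ Part~\eqref{it:gnbetasmall} is vacuous and Part~\eqref{it:gnbetalarge} always applies. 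It is this lower bound $\beta>0$ which translates into $\gamma>(1+a)(2-d)/(2d)$, and it should sit in the case analysis itself, not only in the parenthetical about the endpoint.

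The genuine gap is in the necessity direction at the threshold $\gamma=(1+a)(2-d)/(2d)$ of Part~\eqref{it:largea}. The weak-coupling test $V\mapsto\alpha V$ gives, from the two-sided asymptotic $|\lambda_1(A_g-\alpha V)|\asymp\alpha^{2/(2-d)}$ as in \eqref{weak}, only the \emph{non-strict} necessary condition $2\gamma/(2-d)\geq p$, i.e.\ $\gamma\geq(1+a)(2-d)/(2d)$: at the threshold both sides of \eqref{eq:lowest} scale as the same power of $\alpha$, and the constants $a_1,a_2$ in \eqref{weak} are $V$-dependent, so no contradiction results. The paper closes this case by using Proposition~\ref{duality} as a genuine equivalence: the failure of \eqref{eq:lowest} at the threshold is inherited from the failure of the interpolation inequality at $\beta=0$, $q=2p/(p-1)<\infty$, established in Part~\eqref{it:gnbetaneg} of Theorem~\ref{gn1} by the homogeneous rescaling $u(t)=v(t/l)$, $l\to\infty$ --- not by a particular family of potentials. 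If you wish to run the necessity entirely through test potentials, you would need a family dilated to spatial infinity (the dual object of the spread-out trial functions $v(\cdot/l)$), rather than $\alpha V$ with $V$ fixed of compact support; the standard weak-coupling scaling simply does not see the endpoint obstruction.
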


In the case $\gamma=0$, inequality \eqref{eq:lowest} means that if $\int_{\R_+}
V(t)_+^{\frac{a + 1}{2}} (1+t)^a \, dt < C^{-1}$ then
$\inf\spec (\A_g - V)\geq 0$.

The proof of Theorem \ref{lowest} is based on the following abstract duality result, which does not use the explicit form of $g$.

\begin{proposition}\label{duality}
Assume that the parameters $a>-1$, $\gamma\geq 0$ and
$p:=\gamma+\frac{1+a}2$ are related to the parameters $2<q\leq\infty$,
$\frac{d-2}2\leq\beta<\frac d2$ and $\theta:=\frac{d-2\beta}2$ by
\begin{equation}\label{eq:duality}
p=\frac q{q-2}, \qquad q=\frac {2p}{p-1}, \qquad
a=\frac{(d-1-2\beta)q+2}{q-2}, \qquad
\beta =\frac{dp-1-a}{2p},
\end{equation}
see Figure 1.
Then the inequality \eqref{eq:lowest} holds if and only if
\begin{equation}\label{eq:dualitygn}
\left( \int |u|^q g^{\frac{\beta q-1}{d-1}}\,dt\right)^{2/q}
\leq K(q,\beta,g) \left( \int |u'|^2 g \,dt\right)^{\theta} \left(
  \int |u|^2 g \,dt\right)^{1-\theta} \,.
\end{equation}
for all $u\in H^1(\R_+,g)$. In this case, the constants are related by
\begin{equation}\label{eq:dualityconst}
K(q,\beta,g) = L^\frac {q-2}q \theta^{-\theta} (1 - \theta)^{\theta - 1}
\end{equation}
\end{proposition}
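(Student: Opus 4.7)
The strategy is a Legendre-type duality between the one-bound-state inequality \eqref{eq:lowest} and the Sobolev interpolation bound \eqref{eq:dualitygn}. We first rephrase \eqref{eq:lowest} as a quadratic-form inequality via the Rayleigh characterization of $\lambda_1(A_g-V)$, and then dualize in $V$ using a sharp H\"older estimate to turn it into \eqref{eq:dualitygn}. Because each step is reversible, this simultaneously establishes the equivalence and produces the explicit constant relation \eqref{eq:dualityconst}.

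Since $\lambda_1(A_g - V)$ does not increase when $V$ is replaced by $V_+$, we may assume $V \geq 0$. Denoting the constant in \eqref{eq:lowest} by $L$, the variational principle for $\lambda_1(A_g-V)$ in $L_2(\R_+, g)$ shows that \eqref{eq:lowest} is equivalent to the assertion that
\[
\int V|u|^2 g\,dt - \int |u'|^2 g\,dt \leq L^{1/\gamma}\left(\int V^p g^{a/(d-1)}\,dt\right)^{1/\gamma}\int |u|^2 g\,dt
\]
holds for every $V\geq 0$ and every $u\in H^1(\R_+,g)$. One direction follows from the Rayleigh bound $\lambda_1\int|u|^2 g \leq \int|u'|^2 g - \int V|u|^2 g$; the other by evaluating along the ground state of $A_g-V$ when $\lambda_1<0$ (the case $\lambda_1\geq 0$ is trivial).

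Now fix $u$ and apply H\"older's inequality with conjugate exponents $p$ and $p'=p/(p-1)$, pairing $Vg^{a/((d-1)p)}$ against $|u|^2g^{1-a/((d-1)p)}$. Using \eqref{eq:duality} to verify $2p'=q$ and $(p-a/(d-1))/(p-1)=(\beta q - 1)/(d-1)$, one obtains
\[
\int V|u|^2 g\,dt\leq\left(\int V^p g^{a/(d-1)}\,dt\right)^{1/p}\left(\int |u|^q g^{(\beta q-1)/(d-1)}\,dt\right)^{2/q}.
\]
Sharpness of H\"older (for any prescribed value $I\geq 0$ of the first factor, some $V\geq 0$ achieves equality) means that the quadratic-form inequality above, at fixed $u$ and ranging over $V\geq 0$, is equivalent to the scalar condition
\[
IA - B \leq L^{1/\gamma} D\,I^{p/\gamma}\qquad\text{for all } I\geq 0,
\]
where $A:=(\int |u|^q g^{(\beta q-1)/(d-1)})^{2/q}$, $B:=\int|u'|^2 g$, $D:=\int|u|^2 g$. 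The hypothesis $a>-1$ gives $\mu := p/\gamma > 1$ (since $p-\gamma = (1+a)/2$), and a one-variable calculus optimization shows this is in turn equivalent to
\[
A\leq L^{(1-\theta)/\gamma}\,\mu(\mu-1)^{-\theta}\,B^\theta D^{1-\theta},\qquad \theta:=\frac{\mu-1}{\mu}=1-\frac{\gamma}{p}.
\]
The relations \eqref{eq:duality} give $\theta=(d-2\beta)/2$, matching the exponent in \eqref{eq:dualitygn}; the elementary identities $L^{(1-\theta)/\gamma}=L^{1/p}=L^{(q-2)/q}$ and $\mu(\mu-1)^{-\theta}=\theta^{-\theta}(1-\theta)^{\theta-1}$ then produce exactly the constant of \eqref{eq:dualityconst}.

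The principal technical care lies in the bookkeeping: one must verify via \eqref{eq:duality} that the H\"older pairing yields precisely the weight $g^{(\beta q-1)/(d-1)}$ appearing in \eqref{eq:dualitygn} and that the Legendre exponent $(\mu-1)/\mu$ equals $(d-2\beta)/2$. The limiting instances $\gamma=0$ (where $\mu=\infty$) and $q=\infty$ require minor modifications---handled by continuity in $\gamma$ or by running the same H\"older/optimization scheme in its endpoint form---but present no new conceptual difficulty.
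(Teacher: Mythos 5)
Your proof is correct and follows essentially the same Legendre-type duality argument as the paper: reduce \eqref{eq:lowest} to a quadratic-form inequality via the Rayleigh characterization, separate the supremum over $V$ into a supremum over the $L^p(g^{a/(d-1)}\,dt)$-normalized direction (which the paper phrases as ``by duality'' and you as sharpness of H\"older) and a supremum over the scalar norm (your one-variable calculus over $I$, the paper's explicit ``calculating the supremum'' over $\alpha$), and then identify the resulting exponents and constants. The only difference is the order in which these two commuting suprema are taken, so the arguments are equivalent.
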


In the case $q=\infty$, \eqref{eq:dualitygn} means
\begin{equation*}
\sup |u|^2 g^{\frac{2 \beta}{d-1}}
\leq L \theta^{-\theta} (1 - \theta)^{\theta - 1} \left( \int |u'|^2 g
  \,dt\right)^{\theta}
\left( \int |u|^2 g \,dt\right)^{1-\theta}\, .
\end{equation*}
for all $u\in H^1(\R_+,g)$.

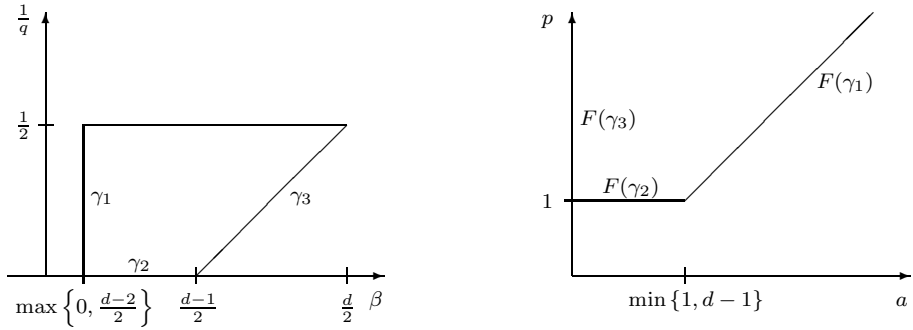
\begin{figure} \label{fig1}
\scriptsize
\begin{center}
\setlength{\unitlength}{5mm}
\begin{picture}(24,9)(0,1)
\put(1,2){\vector(0,1){7}}
\put(0,2){\vector(1,0){10}}
\put(0.2,8.7){$\frac 1q$}
\put(9.6,1.2){$\beta$}

\put(0.8,6){\line(1,0){0.4}}
\put(0.2,5.85){$\frac 12$}

\put(2,1.8){\line(0,1){4.2}}
\put(2,6){\line(1,0){7}}
\put(9,6){\line(-1,-1){4}}

\put(5,1.8){\line(0,1){0.4}}
\put(9,1.8){\line(0,1){0.4}}

\put(0.2,1){$\max\left\{0, \frac {d-2}2\right\}$}
\put(4.5,1){$\frac {d-1}2$}
\put(8.8,1){$\frac d2$}

\put(2.2,4){$\gamma_1$}
\put(3.2,2.2){$\gamma_2$}
\put(7.5,4){$\gamma_3$}


\put(15,2){\vector(0,1){7}}
\put(15,2){\vector(1,0){9}}
\put(14.2,8.7){$p$}
\put(23.6,1.2){$a$}

\put(14.8,4){\line(1,0){3.2}}
\put(18,4){\line(1,1){5}}

\put(18,1.8){\line(0,1){0.4}}
\put(16.5,1.2){$\min \left\{ 1, d-1\right\}$}
\put(14.2,3.8){$1$}

\put(15.2,6){$F(\gamma_3)$}
\put(15.8,4.2){$F(\gamma_2)$}
\put(21.5,7){$F(\gamma_1)$}
\end{picture}
\caption{Parameter range of the Sobolev interpolation
  inequalities. Here $F\left( 1/q, \beta \right) = ( q,
  (d-1-2\beta)q+2)/(q-2)$ and $F(\gamma_1) = \{ (p,a) : p =
  (a+1)/\min\{2,d\}\}$.}
\end{center}
\end{figure}

\begin{proof}[Proof of Proposition \ref{duality}]
Below we will only consider $u \in H^1(\R_+,g)$ and $V \geq 0$ such that
the right hand side of \eqref{eq:lowest} is finite.

Equation \eqref{eq:lowest} holds for all $V$ if and only if
\begin{align}\label{eq:prop421}
\frac {\int |u'|^2 g \, dt - \int V |u|^2 g \, dt} {\int |u|^2 g \,
  dt} \geq -\left( L \int V^p g^{\frac a{d-1}} \, dt \right)^{2/(2p-1-a)}
\end{align}
holds for all $u$ and $V$. Write $V = \alpha W$ with $\alpha$ such that
\begin{align}\label{eq:prop422}
\int W^p g^{\frac a{d-1}} \, dt = 1.
\end{align}
Thus \eqref{eq:prop421} holds for all $u$ and $V$ if and only if
\begin{align} \label{eq:prop424}
\sup_{\alpha > 0} \left( \alpha \int W |u|^2 g \, dt - \alpha^{\frac
  {1}{1-\theta}} L^{\frac {q-2}{q(1-\theta)}} \int |u|^2 g \, dt \right) \leq
  \int |u'|^2 g \, dt
\end{align}
holds for all $u$ and all $W$ obeying \eqref{eq:prop422}. By calculating
the supremum we find that \eqref{eq:prop423} holds for all $u$ and all $W$ obeying \eqref{eq:prop422} if and only if
\begin{align}\label{eq:prop423}
\sup \left\{ \int W |u|^2 g \, dt: \int W^p g^{\frac a{d-1}} \, dt = 1 \right\}
\leq K \left( \int |u'|^2 g \, dt\right)^\theta 
\left( \int |u|^2 g \, dt\right)^{1 - \theta}
\end{align}
for all $u$. By duality
\begin{align*}
\sup \left\{ \int W |u|^2 g \, dt: \ \int W^p g^{\frac a{d-1}} \, dt = 1 \right\}
= \left( \int |u|^q g^{\frac {\beta q - 1}{d - 1} } \, dt \right)^{2/q}.
\end{align*}
Hence \eqref{eq:prop423} holds for all $u$ if and only if
\eqref{eq:dualitygn} holds for all $u$.
\end{proof}

\begin{proof}[Proof of Theorem \ref{lowest}]
Assumption \eqref{eq:gpower} implies that Theorem \ref{gn1} holds (with another constant) if $(1+t)^{d-1}$ is replaced by $g$. Simple arithmetic shows that if $(q,\beta)$ and $(p,a)$ are related as in \eqref{eq:duality}, then the allowed values $(q,\beta)$ in Theorem \ref{gn1} correspond to the allowed values $(p,a)$ in Theorem~\ref{mainA0}. In view of Proposition \ref{duality} we obtain the assertion of Theorem~\ref{lowest}.
\end{proof}

\begin{remark}\label{exprelation}
  We claim that if the inequality
  \begin{equation}\label{eq:exprelation}
    \sup \spec \left( (A_g - V)_-^{\gamma} \right) \leq C
    \int_{\R_+} V(t)_+^{\gamma + \frac{1 + a}{2}} g(t)^b \, dt
  \end{equation}
  holds for some $\gamma\geq 0$, $a\geq 0$, $b\geq 0$ and all $V$, then one has necessarily $b\geq a/(d-1)$. Obviously, the inequality becomes weaker as $b$ increases. This motivates why we restrict ourselves to the case $b=a/(d-1)$ when considering the inequalities \eqref{LT-individual}.

  To prove the claim we apply a similar duality argument as in the proof of Proposition \ref{duality} and find that \eqref{eq:exprelation} is equivalent to
  \begin{equation*}
    \left( \int |u|^q g^{\frac{p-b}{p-1}}\,dt \right)^{2/q}
    \leq K \left( \int |u'|^2 g\,dt \right)^\theta 
    \left( \int |u|^2 g\,dt \right)^{1-\theta}\!\!,
    \quad u\in H^1(\R_+,g),
  \end{equation*}
  where $p$ and $q$ are as in that proposition and $\theta=(p-\gamma)/p$. It follows from Remark \ref{gnexprelation} that $(d-1)(p-b)/(p-1) + 1 \leq q(d-2\theta)/2$. This means $b\geq a/(d-1)$, as claimed.
\end{remark}

\subsection{Estimates in the case of a Dirichlet boundary condition}\label{sec:dirichlet}

Here we will establish the analog of Theorem \ref{mainA0} when a Dirichlet instead of a Neumann boundary condition is imposed at the origin. More precisely we denote by $A_\Dirichlet$ the self-adjoint operator in $L_2(\R_+,g)$ corresponding to the quadratic form \eqref{eq:unperturbed} with form domain $H^1_0 (\R_+,g):=\{f\in H^1(\R_+,g):\ f(0)=0\}$. In this case the conditions for the validity of a Lieb-Thirring inequality become much simpler than in Theorem \ref{mainA0}.

\begin{theorem} \label{mainA0dirichlet}
Assume \eqref{eq:gpower} for some $d\geq 1$ and let $a\geq 0$, $\gamma>0$. Then the inequality
\begin{equation} \label{LT-individualdirichlet}
\tr (A_\Dirichlet - V)_-^{\gamma} 
\leq L \int_{\R_+} V(t)_+^{\gamma + \frac{a + 1}{2}} (1 + t)^a \, dt,
\qquad L=L(\gamma,a,d,c_1,c_2),
\end{equation}
holds for all $V$ if and only if $a, \gamma$ satisfy
\begin{align*}
\gamma\geq\frac{1-a}2 & \qquad\text{if}\ 0\leq a<1,\\
\gamma>0 & \qquad\text{if}\  a\geq 1.
\end{align*}
\end{theorem}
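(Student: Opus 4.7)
\emph{Proof plan.} The strategy mirrors the proof of Theorem \ref{mainA0}: reduce the trace inequality to a one-bound-state inequality via the duality of Proposition \ref{duality}, then lift to the trace bound by a partition argument plus the Aizenman--Lieb trick. The only essential new input is that the Dirichlet condition $u(0)=0$ yields a Hardy inequality, which enlarges the range of admissible Sobolev interpolation inequalities and consequently the range of admissible $(a,\gamma)$.

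\textbf{Sufficiency, step 1 (Sobolev--Hardy on $H^1_0$).} The first task is to establish, for $u\in H^1_0(\R_+,(1+t)^{d-1})$ and every $\beta\in[(d-2)/2,\,d/2]$ with the appropriate range of $q$, the interpolation inequality \eqref{eq:gn1}. The anchor is the weighted Hardy inequality
\[
\int_0^\infty |u|^2\,(1+t)^{d-3}\,dt \;\leq\; C_d \int_0^\infty |u'|^2\,(1+t)^{d-1}\,dt,
\qquad d\neq 2,
\]
which is exactly \eqref{eq:gn1} with $(q,\beta)=(2,(d-2)/2)$, $\theta=1$. Interpolating this endpoint with the Neumann inequalities of Theorem \ref{gn1} by H\"older's inequality fills in the remaining range, including the negative $\beta$'s unavailable in the Neumann case. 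The critical value $d=2$ requires a logarithmic Hardy variant which covers every $\beta>0$ but not $\beta=0$.

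\textbf{Sufficiency, step 2 (Duality).} The proof of Proposition \ref{duality} uses only the variational structure of $A_g$ and transfers verbatim when $H^1(\R_+,g)$ is replaced by $H^1_0(\R_+,g)$. Thus, under the parameter correspondence \eqref{eq:duality}, each Sobolev--Hardy inequality from step 1 is equivalent to the one-bound-state bound
\[
\sup\spec\bigl((A_\Dirichlet-V)_-^\gamma\bigr) \;\leq\; C\int_0^\infty V_+^{\gamma+(1+a)/2}(1+t)^a\,dt.
\]
Tracing \eqref{eq:duality} through the enlarged range $\beta\in[(d-2)/2,d/2]$ produces exactly the $(a,\gamma)$ claimed in the theorem.

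\textbf{Sufficiency, step 3 (Trace bound).} At the endpoint $\gamma_0$ (that is, $\gamma_0=(1-a)/2$ when $a\in[0,1)$, or any small $\gamma_0>0$ when $a\geq 1$), upgrade from the lowest-eigenvalue bound to the trace bound by a Sturm-type partition: using the zeros of the generalized ground state of $A_\Dirichlet-V$, cut $\R_+$ into intervals each of which supports at most one negative eigenvalue of the resulting Dirichlet piece, apply the one-bound-state bound on each piece (extended by zero), and sum. For $\gamma>\gamma_0$ the bound then follows from the Aizenman--Lieb lemma \cite{AL}, which preserves the weight $(1+t)^a$ while increasing the exponent on $V_+$ from $\gamma_0+(1+a)/2$ to $\gamma+(1+a)/2$.

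\textbf{Necessity.} The bound $\gamma\geq(1-a)/2$ for $a\in[0,1)$ is forced by the Dirac-potential argument of Remark \ref{coupling}: fix $t_0>0$ and let $V_n:=n\chi_{(t_0,t_0+n^{-1})}$. A compactly supported trial function peaked at $t_0$ (automatically vanishing at the origin) produces a negative Rayleigh quotient for large $n$, so $\tr(A_\Dirichlet-V_n)_-^\gamma$ is bounded below uniformly in $n$, while the right-hand side of \eqref{LT-individualdirichlet} is of order $n^{\gamma-(1-a)/2}$ and tends to $0$ whenever $\gamma<(1-a)/2$. The strict positivity $\gamma>0$ for $a\geq 1$ is in our hypothesis, and there is nothing further to check.

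\textbf{Anticipated main obstacle.} The technically demanding step is step~1: interpolating between the Hardy endpoint $(q,\beta)=(2,(d-2)/2)$ and the endpoints supplied by Theorem~\ref{gn1} in such a way that one hits every admissible $(q,\beta)$, and in particular handling the borderline $d=2$ via a logarithmic substitute, is the most delicate part of the argument. Once the full family of Sobolev--Hardy inequalities is in hand, steps 2 and 3 proceed in close analogy with the proof of Theorem~\ref{mainA0}.
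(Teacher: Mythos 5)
Your proposal takes a genuinely different route from the paper, and it has a real gap in Step~3. The paper does not prove Theorem \ref{mainA0dirichlet} via Sobolev--Hardy interpolation plus duality plus a Sturm partition. Instead, it first uses Lemma \ref{auxiliary} (a sandwich argument based on the Glazman comparison) to replace $g$ by the exact power $(1+t)^{d-1}$, up to a change in coupling constant. Then the conjugation $u \mapsto u\,(1+t)^{(d-1)/2}$, carried out in \eqref{quadraticformb}, turns the weighted Dirichlet form of $A_\Dirichlet$ into the form of the \emph{unweighted} half-line Schr\"odinger operator with background potential $\frac{(d-1)(d-3)}{4(1+t)^2}$ and a Dirichlet condition at $0$. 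Since $(d-1)(d-3)\geq -1$ for all $d$, the operator inequality
$-\frac{d^2}{dr^2}-\frac{1}{4r^2}\leq -\frac{d^2}{dr^2}+\frac{(d-1)(d-3)}{4r^2}$
reduces everything to the critical Hardy--Lieb--Thirring inequality of Proposition \ref{ef}, imported from \cite{EF1,EF2}. That proposition is the deep ingredient, and it is what delivers the critical exponent $\gamma=(1-a)/2$.

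The gap: your Step~3 claims to promote the one-bound-state inequality to a trace bound at the \emph{critical} exponent $\gamma_0=(1-a)/2$ (for $a<1$) by a Sturm-type partition of $\R_+$ at zeros of a generalized ground state. This mechanism is the one the paper uses to prove the CLR bound \eqref{eq:clr1d}, i.e.\ a \emph{counting} bound with $\gamma=0$; it does not yield bounds on $\sum_k |\lambda_k|^{\gamma}$. You cannot instead go via CLR plus Aizenman--Lieb here: for $1\leq d<2$ a Dirichlet CLR bound with weight $(1+t)^a$ requires $a\geq 1$ and $p=(a+1)/2$ (which forces $\gamma=0$), and for $a<1$ no CLR bound holds at all, so there is nothing for Aizenman--Lieb to act on. Reaching the endpoint $\gamma=(1-a)/2$, which is the analogue of Weidl's critical $\gamma=1/2$ on the line, is precisely what cannot be done by this elementary machinery; the paper quotes the Hardy--LT result of \cite{EF1,EF2} to cover it. Your Steps~1 and~2 are plausible in outline (the anchoring Hardy inequality with $(q,\beta)=(2,(d-2)/2)$ and $\theta=1$ on $H^1_0$ is correct for $d\neq 2$, by the Muckenhoupt criterion, and the duality of Proposition \ref{duality} does transfer to $H^1_0$), and your necessity argument via a Dirac potential centred at $t_0>0$ is fine. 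But without the $\cite{EF1,EF2}$ input (or an equivalent critical-endpoint LT argument), Step~3 does not close, so the sufficiency claim at the endpoint remains unproved.
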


We emphasize that we did not discuss the case $\gamma=0$ in Theorem
\ref{mainA0dirichlet}. 

When proving Theorem \ref{mainA0dirichlet} we will use a result from \cite{EF1} and \cite{EF2} concerning the operator $- \frac {d^2}{d r^2} - \frac{1}{4 r^2} - W$ in $L_2(\R_+)$ with a Dirichlet boundary condition at the origin.

\begin{proposition}\label{ef}
Let $0\leq a<1$ and $\gamma \geq \frac{1-a}2$ or $a \geq 1$ and
$\gamma > 0$, then
\begin{align} \label{eq:ef}
\tr \left(- \frac {d^2}{d r^2} - \frac{1}{4 r^2} - W \right)_-^\gamma
\leq C_{\gamma,a}
\int_{\R_+} W(r)^{\gamma + \frac{1+a}2}_+ r^a\, dr
\end{align}
with a constant $C_{\gamma,a}$ independent of $W$.
\end{proposition}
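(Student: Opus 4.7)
The plan is to reduce Proposition~\ref{ef} to a weighted Lieb--Thirring inequality for the two-dimensional Laplacian restricted to radial functions. Define $U\colon L_2(\R_+, r\,dr)\to L_2(\R_+, dr)$ by $(Uv)(r)=r^{1/2}v(r)$; a short computation gives
\begin{equation*}
U\bigl(-v''-r^{-1} v'\bigr) U^{-1} = -\frac{d^2}{dr^2}-\frac{1}{4r^2}\,.
\end{equation*}
Since $-\partial_r^2-r^{-1}\partial_r$ is the radial part of $-\Delta_{\R^2}$ and $U$ intertwines multiplication operators with themselves, the operator $-d^2/dr^2-1/(4r^2) - W$ on $L_2(\R_+)$ with Dirichlet condition at $0$ is unitarily equivalent to $-\Delta_{\R^2}^{\rm rad} - W(|x|)$ on the space of radial functions in $L_2(\R^2)$. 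Thus \eqref{eq:ef} is equivalent to the weighted bound
\begin{equation}\label{eq:rad2dplan}
\tr\bigl(-\Delta_{\R^2}^{\rm rad} - W(|x|)\bigr)_-^\gamma
\leq C'_{\gamma,a}\int_{\R^2} W(|x|)_+^{\gamma+(1+a)/2}\,|x|^{a-1}\,dx\,.
\end{equation}

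For $a=1$ and $\gamma>0$, \eqref{eq:rad2dplan} is precisely the classical two-dimensional Lieb--Thirring inequality, restricted to radial potentials. The range $a>1$, $\gamma>0$ reduces to the $a=1$ case by splitting the integral into $\{|x|\leq 1\}$ and $\{|x|\geq 1\}$: on the latter, $|x|^{a-1}\geq 1$ and the $a=1$ bound applies; on the former, one uses a local Hardy-type estimate together with the local Sobolev bound in two dimensions. For $0\leq a<1$, where the weight is singular at the origin, the key analytic input is a Stein--Weiss weighted Sobolev inequality for radial functions, which via the substitution $v(x)=|x|^{-1/2}u(|x|)$ is equivalent to the one-dimensional inequality of Theorem~\ref{gn1} with $d=2$. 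Combining this Sobolev inequality with a Birman--Schwinger/duality argument in the spirit of Proposition~\ref{duality} yields \eqref{eq:rad2dplan} in the endpoint case $\gamma=(1-a)/2$; the range $\gamma>(1-a)/2$ then follows by the Aizenman--Lieb monotonicity argument~\cite{AL}.

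The main obstacle is the endpoint case $0\leq a<1$, $\gamma=(1-a)/2$, and in particular $a=0$, $\gamma=1/2$, where $\int W_+\,dr$ appears on the right-hand side and the matching Sobolev inequality sits at the boundary of its range of validity in several senses at once. One must carefully exploit that $-d^2/dr^2-1/(4r^2)\geq 0$ is non-negative but carries no virtual bound state at $0$---information that is conveniently encoded in the improved Hardy inequalities available after passing to the two-dimensional picture. This is exactly the refinement carried out in~\cite{EF1,EF2}, whose conclusions are invoked here.
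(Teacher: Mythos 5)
The paper itself gives no proof of Proposition \ref{ef}: it is imported verbatim from \cite{EF1} and \cite{EF2}, so your final step of invoking those papers is exactly what the authors do, and the passage to the radial two-dimensional Laplacian via $u=r^{1/2}v$ is indeed the standard picture behind those references. Two cautions, however, in case your surrounding sketch is read as an independent argument. First, the duality of Proposition \ref{duality} combined with Theorem \ref{gn1} (for $d=2$) controls only $\sup\spec\,(A-V)_-^\gamma$, i.e.\ the lowest eigenvalue; it cannot by itself yield the full trace bound \eqref{eq:ef} at the endpoint $\gamma=\frac{1-a}{2}$, and the oscillation/CLR mechanism of Theorem \ref{clr} is unavailable here since the effective weight $g(t)\sim t$ has $\int^\infty dt/g=\infty$. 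Summing over \emph{all} eigenvalues in the presence of the critical Hardy term, especially at $a=0$, $\gamma=\tfrac12$, is precisely the hard content of \cite{EF1,EF2}. Second, the range $a>1$ does not reduce to $a=1$ by splitting at $|x|=1$: each weighted inequality is scale invariant, and the two right-hand sides are not comparable on either region (for a small potential supported in $\{|x|\geq 1\}$ the integral of $W_+^{\gamma+(1+a)/2}|x|^{a-1}$ is much smaller than that of $W_+^{\gamma+1}$), so the claimed reduction fails; for $a\geq 1$ one must again quote the references or give a genuine argument. As a citation your proposal matches the paper; as a proof sketch these two steps are gaps.
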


Before we can apply this estimate we have to replace the (possibly non-smooth) function $g$ by a smooth function with the same behavior at infinity. To this end we consider the self-adjoint operator $\B_\Dirichlet$ in $L_2(\R_+)$ corresponding to the quadratic form
\begin{align}
\begin{split}\label{quadraticformb}
\b_\Dirichlet [u] & = \int_{\R_+} \left| \left( \frac{u(t)} {(1 + t)^{(d-1)/2}}
  \right)'\right|^2 (1 + t)^{d-1} \, dt \\
& = \int_{\R_+} \left( |u'|^2 + \frac {(d - 1) (d - 3)
|u|^2} {4 (1 + t)^2}\right) \, dt
\end{split}
\end{align}
defined on $H^1_0(\R_+)$. We prove now that the eigenvalues of $A_\Dirichlet - V$ can be estimated -- modulo a change in the coupling constant -- from above and below by those of $\B_\Dirichlet - V$. A similar idea was used in \cite{FSW} to obtain Lieb-Thirring inequalities for Schr\"odinger operators with background potentials.

\begin{lemma} \label{auxiliary}
Assume \eqref{eq:gpower} for some $d\geq 1$ and put $\beta:=c_2/c_1$. Then for any $V\geq 0$ and $\gamma \geq 0$ we have
\begin{align} \label{eigenvalue-ineq}
\tr (\B_\Dirichlet - \beta^{-1} V )_-^\gamma
\leq \tr (A_\Dirichlet - V)_-^\gamma 
\leq \tr (\B_\Dirichlet - \beta V )_-^\gamma.
\end{align}
\end{lemma}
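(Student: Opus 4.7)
The plan is to conjugate $\B_\Dirichlet$ unitarily onto the same Hilbert space as $A_\Dirichlet$ and then compare the two operators via the min-max principle, using the pointwise bound \eqref{eq:gpower}.

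First, let $U:L_2(\R_+,(1+t)^{d-1})\to L_2(\R_+)$ be the unitary $(Uf)(t):=(1+t)^{(d-1)/2}f(t)$. The first identity in \eqref{quadraticformb} shows that $U^*\B_\Dirichlet U=\tilde\B$, where $\tilde\B$ is the self-adjoint operator in $L_2(\R_+,(1+t)^{d-1})$ associated with the form $\int|f'|^2(1+t)^{d-1}\,dt$ on $H^1_0(\R_+,(1+t)^{d-1})$. Since $U$ commutes with multiplication by $V$, one has $\tr(\B_\Dirichlet-\lambda V)_-^\gamma=\tr(\tilde\B-\lambda V)_-^\gamma$ for every $\lambda\in\R$, so the claim is equivalent to its analog with $\tilde\B$ replacing $\B_\Dirichlet$. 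By \eqref{eq:gpower}, $L_2(\R_+,g)$ and $L_2(\R_+,(1+t)^{d-1})$ coincide as vector spaces with equivalent norms, and likewise for their weighted $H^1_0$ form domains; thus $A_\Dirichlet$ and $\tilde\B$ act on the same underlying space and Rayleigh quotients can meaningfully be compared.

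The main step is to show that, for every $s\geq 0$,
\[
N(\tilde\B-\beta^{-1}V<-s)\ \leq\ N(A_\Dirichlet-V<-s)\ \leq\ N(\tilde\B-\beta V<-s),
\]
via the min-max formula $N(T<-s)=\sup\{\dim M:\langle f,Tf\rangle\leq-s\|f\|^2\ \forall f\in M\}$, with the form and norm taken in the relevant Hilbert space. Let $X=\int|f'|^2(1+t)^{d-1}dt$, $Y=\int V|f|^2(1+t)^{d-1}dt$, $Z=\int|f|^2(1+t)^{d-1}dt$, and write $X',Y',Z'$ for the corresponding integrals against $g\,dt$. Since $V\geq 0$, \eqref{eq:gpower} yields $c_1X\leq X'\leq c_2X$ and the analogous bounds for $Y$ and $Z$. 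For the right-hand inequality, pick $M$ admissible for $A_\Dirichlet-V$ at level $-s$, so $X'-Y'\leq-sZ'$ on $M$; then
\[
X-\beta Y\ \leq\ \frac{X'}{c_1}-\frac{c_2}{c_1}\cdot\frac{Y'}{c_2}\ =\ \frac{X'-Y'}{c_1}\ \leq\ -\frac{sZ'}{c_1}\ \leq\ -sZ,
\]
so $M$ is admissible for $\tilde\B-\beta V$ at the same level. The left-hand inequality is proved symmetrically: if $X-\beta^{-1}Y\leq-sZ$, then $X'-Y'\leq c_2X-c_1Y\leq -c_2sZ\leq-sZ'$.

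Finally, for $\gamma>0$ the layer-cake identity $\tr T_-^\gamma=\gamma\int_0^\infty s^{\gamma-1}N(T<-s)\,ds$, combined with the trace identity from the first paragraph, converts the counting-function inequality into \eqref{eigenvalue-ineq}; for $\gamma=0$ it reduces to taking $s\to 0^+$ directly. The argument is essentially pointwise and I do not foresee a substantive obstacle, but it is crucial that $V\geq 0$: this fixes the correct direction of the comparison between $Y$ and $Y'$, and is the reason why the constants $\beta$ and $\beta^{-1}$ (rather than $1$) appear on the two sides of \eqref{eigenvalue-ineq}.
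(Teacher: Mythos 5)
Your proof is correct and follows essentially the same route as the paper: conjugate $\B_\Dirichlet$ by the unitary $U$ to an operator on $L_2(\R_+,(1+t)^{d-1})$, compare Rayleigh quotients termwise via \eqref{eq:gpower} and $V\geq0$ (this is exactly where the constants $c_2/c_1$ and $c_1/c_2$ enter), invoke Glazman's lemma to transfer the comparison to counting functions, and then use the layer-cake identity $\tr T_-^\gamma=\gamma\int_0^\infty\tau^{\gamma-1}N(T+\tau)\,d\tau$. No substantive differences.
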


\begin{proof}
We shall prove that for any $\tau>0$
\begin{equation}\label{eq:number}
N(\B_\Dirichlet-\beta^{-1} V+\tau) \leq N(A_\Dirichlet- V+\tau) \leq N(\B_\Dirichlet-\beta V+\tau).
\end{equation}
This will imply the statement since
\begin{equation*}
\tr T_-^\gamma = \gamma \int_0^\infty \tau^{\gamma-1} N(T+\tau)\,d\tau.
\end{equation*}
To prove the second inequality in \eqref{eq:number} suppose that
$$
\int_{\R_+}\left(|f'|^2 - V |f|^2\right)\, g\, dt < -\tau \int_{\R_+}\, |f|^2 g\, dt
$$
for some $f\in H^1_0(\R_+,g)$. Using \eqref{eq:gpower} we conclude that
\begin{align*}
c_1 \int_{\R_+}\left(|f'|^2 - \beta V |f|^2\right)\, (1+t)^{d-1}\, dt & \, 
\leq \, \int_{\R_+}\left(|f'|^2 - V |f|^2\right) \, g\, dt \\
& \leq \, -\tau \int_{\R_+}\, |f|^2g\, dt \, \\
& \leq \, -\tau c_1 \int_{\R_+}\, |f|^2 (1+t)^{d-1} \, dt\, .
\end{align*}
It follows from Glazman's lemma (see, e.g., \cite[Thm. 10.2.3]{BS}) that
$$
N(A_\Dirichlet- V+\tau) \leq N(\tilde A_\Dirichlet-\beta V+\tau),
$$
where $\tilde A_\Dirichlet$ denotes the operator $L^2(\R_+,(1+t)^{d-1})$ corresponding to the quadratic form $\int |f'|^2 (1+t)^{d-1}\, dt$ with a Dirichlet boundary condition. Since $\tilde A_\Dirichlet-\beta V$ in $L^2(\R_+,(1+t)^{d-1})$ is unitarily equivalent to $\B_\Dirichlet-\beta V$ in $L^2(\R_+)$, we obtain the second inequality in \eqref{eq:number}. The first one is proved similarly.
\end{proof}

\begin{proof}[Proof of Theorem \ref{mainA0dirichlet}]
We may assume that $V\geq 0$. We use the operator inequality
$$
- \frac {d^2}{d r^2} - \frac{1}{4 r^2}
\leq - \frac {d^2}{d r^2} + \frac{(d-1)(d-3)}{4 r^2} \, .
$$
(Note also that the form domain of the operator on the LHS is strictly larger than $H^1_0(\R_+)$.) It follows that
$$
\tr (\B_\Dirichlet - \beta V )_-^\gamma \,
\leq \,\tr \left(- \frac {d^2}{d r^2} - \frac{1}{4 r^2} - \beta V \right)_-^\gamma.
$$
The result now follows from Proposition \ref{ef} and Lemma \ref{auxiliary}.
\end{proof}


\subsection{Putting it all together}

Finally we give the

\begin{proof}[Proof of Theorem \ref{mainA0}]
The variational principle implies that the eigenvalues of the Dirichlet and the Neumann problems interlace (see, e.g., \cite[Thm. 10.2.5]{BS}). Hence
\begin{align*}
\tr (A - V)_-^\gamma
\leq \sup\spec \left((A - V)_-^\gamma \right) + \tr(A_\Dirichlet - V)_-^{\gamma}.
\end{align*}
We estimate the first term on the right hand side via Theorem \ref{lowest} (recall \eqref{eq:gpower}) and the second one via Theorem \ref{mainA0dirichlet}. This completes the proof of the `if' part of the statement. The `only if' statement follows from the `only if' part of Theorem \ref{lowest}.
\end{proof}



\bibliographystyle{amsalpha}

\end{document}